\title{Critical dynamics of variable-separated affine correspondences}
\author{Patrick Ingram}
\date{\today}
\address{Fort Collins, Colorado, United States of America}
\newcommand{\QQ}{\mathbb{Q}}
\newcommand{\ZZ}{\mathbb{Z}}
\newcommand{\CC}{\mathbb{C}}
\newcommand{\RR}{\mathbb{R}}
\newcommand{\FF}{\mathbb{F}}
\newcommand{\PP}{\mathbb{P}}
\renewcommand{\AA}{\mathbb{A}}
\newcommand{\Ocal}{\mathcal{O}}
\newcommand{\pf}{\mathfrak{p}}
\newcommand{\MOD}[1]{~(\textup{mod}~#1)}
\newcommand{\basin}{\lambda}
\newcommand{\path}{\mathscr{P}}
\renewcommand{\phi}{\varphi}
\renewcommand{\epsilon}{\varepsilon}
\newtheorem{theorem}{Theorem}[section]
\newtheorem{lemma}[theorem]{Lemma}
\newtheorem{proposition}[theorem]{Proposition}
\newtheorem{conjecture}[theorem]{Conjecture}
\theoremstyle{remark}
\newtheorem{remark}[theorem]{Remark}
\theoremstyle{definition}
\newtheorem{definition}[theorem]{Definition}
\begin{document}
\begin{abstract}
We examine affine correspondences of the form $g(y)=f(x)$, for $f$ and $g$ polynomials satisfying $\deg(g)<\deg(f)$, with the property that every critical point of the correspondence admits at least one finite forward orbit. In the case $g(y)=y$, this reduces to the study of post-critically finite polynomials, and our main result extends earlier finiteness results of the author. Specifically, we show that the collection of such correspondences of a given bidegree coincides with a subset of the parameter space of bounded Weil height. We also show that there are no non-trivial holomorphic families of correspondences with the above-described property.
\end{abstract}

\maketitle

%%%%%%%%%%%%%%%%%%%%%%%%%%%%%%%%%%%%%%%%%%%%%%%%%%%%%%%%%%%%%%%%%%%%%%
%%%%%%%%%%%%%%%%%%%%%%%%%%%%%%%%%%%%%%%%%%%%%%%%%%%%%%%%%%%%%%%%%%%%%%
%%%%%%%%%%%%%%%%%%%%%%%%%%%%%%%%%%%%%%%%%%%%%%%%%%%%%%%%%%%%%%%%%%%%%%
%%%%%%%%%%%%%%%%%%%%%%%%%%%%%%%%%%%%%%%%%%%%%%%%%%%%%%%%%%%%%%%%%%%%%%
%%%%%%%%%%%%%%%%%%%%%%%%%%%%%%%%%%%%%%%%%%%%%%%%%%%%%%%%%%%%%%%%%%%%%%
%%%%%%%%%%%%%%%%%%%%%%%%%%%%%%%%%%%%%%%%%%%%%%%%%%%%%%%%%%%%%%%%%%%%%%
%%%%%%%%%%%%%%%%%%%%%%%%%%%%%%%%%%%%%%%%%%%%%%%%%%%%%%%%%%%%%%%%%%%%%%
%%%%%%%%%%%%%%%%%%%%%%%%%%%%%%%%%%%%%%%%%%%%%%%%%%%%%%%%%%%%%%%%%%%%%%

\section{Introduction}

Working for the moment over $\CC$, a \emph{correspondence} on $\AA^1$ is a closed curve $C\subseteq\AA^2$ with the property that both coordinate projections $x, y:C\to\AA^1$ are finite and surjective, and a \emph{critical point} of such an object is a point $a\in \AA^1$ such that $C$ is not locally the graph of a biholomorphism at some point with $x=a$. We are interested in studying the critical dynamics of correspondences, that is, the behaviour of sequences $x_n\in \CC$ such that $x_0$ is a critical point of $C$, and such that $(x_n, x_{n+1})\in C$ for all $n$; such a sequence will be called a \emph{path} originating at $x_0$. In the case that $C$ is the graph of a polynomial, this corresponds to studying the forward orbits of critical points in the usual sense, which are well-known to reveal a great deal about the general dynamics of the function in question. In the case of correspondences, there is some indication that critical orbits are equally important~\cite{bullett2}.

Given the significance of critical orbits in the dynamics of rational functions, it is not surprising that the case of postcritically finite (PCF) morphisms has received much attention. Bullett~\cite{bullett1} has considered correspondences which are \emph{critically finite}, but it is worth noting that his definition (that all critical points have finite grand orbit) is very restrictive. Indeed, the graph of a PCF  polynomial is generally not a critically finite correspondence in this sense.

We will say that a correspondence is \emph{postcritically constrained} (PCC) if and only if each critical point admits at least one finite forward orbit, that is, one preperiodic path. It is easy to check that the correspondence $y=f(x)$ is PCC if and only $f$ is PCF. We will in particular consider the case of \emph{variable-separated} correspondences, i.e., those of the form
\begin{equation}\label{eq:varsep}C:g(y)=f(x),\end{equation}
for polynomials $f$, $g$ satisfying $\deg(g)<\deg(f)$.

%\begin{example}
%Consider the correspondence $C:y^2=x^3+1$ on $\AA^1$. It follows from the definition in Section~\ref{sec:geom} that the critical points of $C$ are the $x$-values of points at which either $x$ or $y$ ramifies, i.e., $\operatorname{Crit}(C)=\{0, -1, -\omega, -\omega^2\}$, where $\omega$ is a primitive 3rd root of unity. From the following subgraph we see that every critical point is contained in a preperiodic path (although not every path starting at a critical point is preperiodic!). So this is an example of a PCC correspondence.
%\[\xymatrix{
% & -\omega \ar[d] & & \sqrt{2}\\
%-1 \ar@/^1pc/[r] & 0 \ar[r] \ar@/^1pc/[l]  & 1 \ar[ur] \ar[dr]&\\
%& -\omega^2 \ar[u] & & -\sqrt{2}
%}\]
%\begin{figure}
%\[\xymatrix{
%& -1 \ar@/^1pc/[d] & \\
%-\omega \ar[r] & 0 \ar@/^1pc/[u] \ar[d] & -\omega^2 \ar[l]\\
%& 1 \ar[dl] \ar[dr] & \\
%\dots & & \ddots
%-\sqrt{2} & & \sqrt{2}
%}\]
%\caption{Some constrained critical paths}
%\end{figure}
%\end{example}

Our first result is arithmetic in nature, showing that the set of PCC correspondences of a given bidegree is parametrized by a set of bounded height.

\begin{theorem}\label{th:heightbound}
Let $a_i, b_j\in \CC$ such that
\begin{equation}\label{eq:normal}y^e+b_{e-1}y^{e-1}+\cdots +b_1y = x^d+a_{d-1}x^{d-1}+\cdots + a_1x\end{equation}
is PCC, where we assume $d>e\geq 1$. Then the $a_i$ and $b_j$ are algebraic numbers of height bounded in terms of $d$ and $e$.
 In particular, there are only finitely many PCC correspondences in the form~\eqref{eq:normal} with coefficients of algebraic degree at most $D$, for any fixed $D$.
\end{theorem}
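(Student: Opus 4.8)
The plan is to follow the template of the author's earlier work on postcritically finite polynomials, of which the case $e=1$ of the present statement is an instance: one attaches to each correspondence a ``critical height'' that is forced to vanish by the postcritical constraint, and shows it dominates a Weil height on the space of coefficients, up to $O_{d,e}(1)$. So I assume $e\geq 2$ and work over a number field $K$ containing all the $a_i,b_j$ (for the final assertion one works instead over a function field). Two elementary observations underlie everything. First, the critical points of $C$, read off the $x$-coordinate, are precisely the roots of $f'$ together with the finite set $f^{-1}(\Sigma)$, where $\Sigma$ is the set of critical values of $g$. Second, along any path the relation $g(x_{n+1})=f(x_n)$ together with $\deg f=d>e=\deg g$ forces, at every place $v$, geometric growth $|x_{n+1}|_v\gtrsim|x_n|_v^{d/e}$ as soon as $|x_n|_v$ exceeds a radius $\rho_v$ governed by the $v$-adic sizes of the coefficients; hence a preperiodic path stays inside $\{|x|_v\leq\rho_v\}$ at every $v$, and PCC forces every critical point of $C$ into this region for all $v$.

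Next I would reduce the theorem to bounding the heights of the critical points of $f$ and of the critical values of $g$. Since the normalization makes $f$ monic with $f(0)=0$, the polynomial $f$ is recovered from the unordered multiset of its critical points through symmetric functions, so $h(a_i)\ll_d 1+\max_k h(c_k)$. The same normalization holds for $g$, and the classical rigidity of branched covers of the affine line with a fixed branch configuration---finiteness of the relevant Hurwitz spaces---shows that $g$ ranges over a set of bounded height once $\Sigma$ ranges over a set of bounded height, giving $h(b_j)\ll_e 1+\max_{\sigma\in\Sigma}h(\sigma)$. As the roots of $f'$ and the points of $f^{-1}(\Sigma)$ are all critical points of $C$, and the points of $\Sigma$ are their $f$-images, it therefore suffices to bound, at each place $v$ by a quantity $\ll_{d,e}1$, the $v$-adic sizes of these critical points and critical values.

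This local estimate is the substance of the proof, and I would organize it in two stages forced by the asymmetry $d>e$. First one bounds $g$: were its $v$-adic size large, its largest critical point $w$ would be $v$-adically large, and---barring an anomalous cancellation---so would the critical value $g(w)$; but then any $a\in f^{-1}(g(w))$ is a critical point of $C$ all of whose paths escape at $v$, since the first step $g(x_1)=g(w)$ already makes $|x_1|_v$ large and the degree gap then drives $|x_n|_v\to\infty$, contradicting PCC. The anomalous critical points---those of $g$ whose critical value is small, the extreme case being $g=y^e$ with $\Sigma=\{0\}$---yield only paths falling into the fixed point $0$ of $C$ and impose no constraint; these are neutralized by the rigidity input, which prevents $g$ from being large while all of $\Sigma$ is small. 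One concludes that the $v$-adic size of $g$, hence $h(b_j)$, is $\ll_{d,e}1$. With $g$ now controlled---so that $g$-preimages of $v$-adically bounded sets are $v$-adically bounded---I would run the parallel argument on the roots of $f'$ and the points of $f^{-1}(\Sigma)$ to bound the $v$-adic size of $f$, once more using rigidity to exclude the possibility that $f$ is large while all of its critical values are small.

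Summing these local bounds over $v$ gives $h(a_i),h(b_j)\ll_{d,e}1$, and Northcott's theorem then yields the finiteness statement; the algebraicity of the coefficients follows by rerunning the estimates over the function field generated by a supposedly transcendental coefficient, where a uniform bound at every place forces that coefficient to be a constant. The step I expect to be hardest is precisely the handling of the anomalous, degenerate critical points in the local estimate: a critical point mapping to a small critical value genuinely carries a harmless bounded path and must be removed from the count, and it is in guaranteeing that large coefficients nevertheless produce some critical point all of whose paths escape that one is forced into the delicate appeal to rigidity of branched covers. A secondary difficulty is the borderline case $d-e=1$, where the growth ratio $d/e$ only barely exceeds $1$ and the estimates must be arranged so that the implied constants do not degenerate.
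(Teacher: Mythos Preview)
Your overall strategy---build local escape-rate functions, note that PCC forces the infimum over critical paths to vanish, and compare to a Weil height on the coefficient space place by place---is exactly the paper's approach (this is Theorem~\ref{th:main}, from which Theorem~\ref{th:heightbound} is deduced). The two observations you isolate, namely the description of $\operatorname{Crit}(C)$ and the escape dichotomy $|x_{n+1}|_v\gtrsim|x_n|_v^{d/e}$ once $|x_n|_v$ exceeds a radius $\rho_v$, are also the right starting points.

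There is, however, a structural gap in your two-stage local argument. You propose to bound $g$ absolutely first, and only then $f$. But the escape radius $\rho_v$ depends on the $v$-adic sizes of \emph{both} polynomials: in the paper's normalization it is essentially $\log^+\max\{\|\mathbf{s}\|_v,\|\mathbf{t}\|_v^{e/d}\}$, where $\mathbf{s},\mathbf{t}$ list the critical points of $f,g$. In your stage one you take a critical point $w$ of $g$ with large critical value, pull back to $a\in f^{-1}(g(w))$, and observe that every first step $x_1$ satisfies $|x_1|_v\approx\|\mathbf{t}\|_v$; you then assert ``the degree gap drives $|x_n|_v\to\infty$.'' That last step fires only when $|x_1|_v>\rho_v$, and if $\|\mathbf{s}\|_v\gg\|\mathbf{t}\|_v$ this simply fails: $x_1$ lands back inside the basin and nothing escapes. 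So stage one does not yield an absolute bound $\|\mathbf{t}\|_v\ll_{d,e}1$ as you claim; at best it gives the \emph{relative} bound $\log\|\mathbf{t}\|_v\le\frac{d}{e}\log^+\|\mathbf{s}\|_v+O(1)$. The paper (Lemma~\ref{lem:whack}) handles this with a single dichotomy at each place: if $\log\|\mathbf{t}\|_v\gtrsim\frac{d}{e}\log^+\|\mathbf{s}\|_v$ one uses a large critical value of $g$, and otherwise one uses a large critical value of $f$; in either case one exhibits a critical point \emph{all} of whose successors after one step already exceed $\rho_v$. Your two stages can be repaired, but only if stage one is read as producing the relative bound, with the absolute bound on $g$ emerging only after stage two bounds $f$.

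On the input you call ``rigidity of branched covers / finiteness of Hurwitz spaces'': what is actually required is the \emph{local}, quantitative statement that a normalized polynomial cannot have all critical values $v$-adically small unless its critical points are themselves small. The paper proves this as an explicit Nullstellensatz identity (Lemma~\ref{lem:nullstellensatz}): over $\ZZ[1/d!]$ one has $s_i^D=\sum_jA_{ij}\,f_{\mathbf{s}}(s_j)$ with homogeneous $A_{ij}$, which at every place yields $\max_i\log|s_i|_v\le\frac{1}{d}\max_j\log|f_{\mathbf{s}}(s_j)|_v+O(1)$, the error vanishing outside primes up to $d$. Hurwitz finiteness is the set-theoretic shadow of this (the common zero locus of the critical values is the origin), but invoking it only as a global finiteness or height statement does not by itself produce the place-by-place control you need; you would be forced to redo the Nullstellensatz step anyway. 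This identity also dispatches your worry about the borderline $d-e=1$: the constants come from the coefficients of the $A_{ij}$ and the primes dividing $d!$, and do not degenerate as $d/e\to 1$.
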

We note that, over an algebraically closed field, any correspondence of the form~\eqref{eq:varsep} can be put in the form~\eqref{eq:normal} by an affine-linear change of variables.

Theorem~\ref{th:heightbound} follows immediately from a stronger result. We define a natural height $h_{\mathrm{Weil}}(C)$ on a correspondence $C$ of the above form, derived from the usual Weil height of the tuple of coefficients in a certain weighted projective space. We also define a \emph{critical height} $h_{\mathrm{Crit}}(C)$ which  vanishes on PCC correspondences. It is not \emph{a priori} obvious that this critical `height' is related in any way to a Weil height on the appropriate variety but, as it betides, the two functions  are essentially the same.
\begin{theorem}\label{th:main}
For any correspondence $C$ as above we have
\[h_{\mathrm{Crit}}(C)=h_{\mathrm{Weil}}(C)+O(1),\]
where the implied constants depend only on $d$ and $e$, and can be made explicit.
\end{theorem}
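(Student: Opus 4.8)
The plan is to prove the identity one place at a time and then sum over all places. Fix a number field $K$ containing the coefficients $a_i,b_j$ of the normal form~\eqref{eq:normal}, and let $M_K$ be its set of places. Both quantities in the statement decompose into local contributions: one has $h_{\mathrm{Weil}}(C)=\sum_{v\in M_K}\tfrac{[K_v:\QQ_v]}{[K:\QQ]}\,\lambda_{\mathrm{Weil},v}(C)$, where $\lambda_{\mathrm{Weil},v}(C)$ is the logarithm of the appropriately weighted maximum of $1$, the $|a_i|_v$, and the $|b_j|_v$; and, unwinding the definitions, $h_{\mathrm{Crit}}(C)=\sum_{a}\h_C(a)=\sum_v\tfrac{[K_v:\QQ_v]}{[K:\QQ]}\sum_a G_v(a)$, the inner sum over the finitely many critical points $a$ of $C$ (those with $f(a)=g(\gamma)$ for $\gamma$ a root of $g'$), where for each $v$ the local canonical function $G_v\colon\AA^1(\CC_v)\to\RR_{\ge 0}$ is $G_v(x)=\lim_{n\to\infty}(e/d)^n\min\log^+|x_n|_v$, the minimum taken over all length-$n$ paths $x=x_0,x_1,\dots,x_n$ on $C$. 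The first technical point is that this limit exists, satisfies the correspondence analogue of the Green's-function relation relating $G_v$ at $x$ to $G_v$ at the points $y$ with $g(y)=f(x)$, and obeys $G_v=\log^+|\cdot|_v+O_v(1)$ with error controlled by $\lambda_{\mathrm{Weil},v}(C)$; this rests only on the elementary observation that $g(y)=f(x)$ forces $|y|_v^e$ and $|x|_v^d$ to agree up to bounded coefficient-dependent factors, together with the hypothesis $d>e$, which makes the rescaled minimum genuinely convergent.

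Next I would reduce to finitely many places. Exclude the archimedean places, the $v\mid de$, and the finitely many $v$ at which some $a_i$ or $b_j$ is not $v$-integral; call the excluded set $S$. For $v\notin S$ one checks directly that $\lambda_{\mathrm{Weil},v}(C)=0$, since $f$ is monic and all coefficients are $v$-integral, and also $\sum_aG_v(a)=0$: each $\gamma$ is a root of the monic $\Ocal_v$-polynomial $e^{-1}g'$, so $g(\gamma)$ is $v$-integral, so each critical $a$ is a root of the monic $\Ocal_v$-polynomial $f(X)-g(\gamma)$, and the same argument shows every $x_n$ along any path from $a$ stays in the unit ball of $\CC_v$, whence $\log^+|x_n|_v\equiv 0$ and $G_v(a)=0$. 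Thus it remains to show $\sum_{v\in S}\tfrac{[K_v:\QQ_v]}{[K:\QQ]}\bigl(\sum_aG_v(a)-\lambda_{\mathrm{Weil},v}(C)\bigr)=O(1)$ with constant depending only on $d$ and $e$.

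The crux is the comparison at $v\in S$. By the first paragraph, $\sum_aG_v(a)=\sum_a\log^+|a|_v+O_v(1)$, so it suffices to match $\sum_a\log^+|a|_v$ against $\lambda_{\mathrm{Weil},v}(C)$ up to a summable error. The critical points are precisely the roots of $\prod_{\gamma\colon g'(\gamma)=0}\bigl(f(X)-g(\gamma)\bigr)$, a polynomial in $X$ of degree $d(e-1)$ whose coefficients are symmetric functions of the roots of $g'$, hence integer polynomials (up to powers of $e$) in the $a_i$ and $b_j$, and whose leading coefficient and constant term are $\Res_Y$-type expressions in those coefficients. Standard resultant and Mahler-measure estimates then express $\sum_a\log^+|a|_v$, place by place, in terms of $\lambda_{\mathrm{Weil},v}(C)$ with bounded error, and the product formula applied to this resultant/discriminant supplies the global lower bound $h_{\mathrm{Crit}}(C)\ge h_{\mathrm{Weil}}(C)-O(1)$; for the reverse inequality one uses $G_v\le\log^+|\cdot|_v+O_v(1)$ with the error again controlled by the coefficient sizes at the finitely many $v\in S$. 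Making the two weighted degrees agree exactly — so that the comparison constant in front of $h_{\mathrm{Weil}}$ is $1$ rather than merely some positive $c$ — is exactly what the normalization~\eqref{eq:normal} (monic, zero constant term) is arranged to make transparent.

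I expect the main obstacle to be the archimedean case of the estimate $G_v=\log^+|\cdot|_v+O_v(1)$: showing that the minimum-over-all-paths potential, defined by an infinite optimization with $e$ branches at every stage, nevertheless agrees with the explicit algebraic data up to a universal bounded error. In the classical post-critically finite situation $g(y)=y$ the local canonical height is a genuine limit along the single orbit $z,f(z),f^2(z),\dots$, and the estimate is routine; here one must argue that the minimizing branch at each step lands, and stays, in a region where $G_v$ is comparable to $\log^+|\cdot|_v$ with a constant governed only by the coefficients, and it is the hypothesis $d>e$, forcing escape at rate $(d/e)^n>1$, that prevents the minimization from producing slowly escaping paths along which this comparison would degrade. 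A secondary, more bookkeeping, difficulty is identifying the precise resultant-type quantity $R(a_i,b_j)$ whose logarithmic Weil height is $h_{\mathrm{Weil}}(C)+O(1)$ and whose local absolute values reproduce $\sum_{a\ \mathrm{crit}}\log^+|a|_v$, while keeping explicit track of leading coefficients and of the primes dividing $d$ and $e$.
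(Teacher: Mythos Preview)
Your overall architecture---decompose both sides locally and compare place by place---matches the paper's, and your treatment of the places outside a finite set $S$ is fine. The genuine gap is in the lower bound at the bad places, and it is not a bookkeeping issue but a missing idea.

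You write $G_v(a)=\log^+|a|_v+O_v(1)$ with error ``controlled by $\lambda_{\mathrm{Weil},v}(C)$'', and then propose to match $\sum_a\log^+|a|_v$ against $\lambda_{\mathrm{Weil},v}(C)$ via resultant estimates. But ``controlled by'' here means the error is of size $O(\lambda_{\mathrm{Weil},v}(C))$, not $O(1)$: a critical point $a$ always lies in the basin (one has $\log^+|a|_v\le \lambda(C,v)+O(1)$), and for such $a$ the minimum-over-paths Green's function can take any value between $0$ and roughly $\lambda(C,v)$. So when you sum over the $de-1$ critical points, the accumulated error $O((de-1)\lambda_{\mathrm{Weil},v}(C))$ is at least as large as the main term you are trying to isolate, and the inequality $\sum_aG_v(a)\ge \lambda_{\mathrm{Weil},v}(C)-O(1)$ simply does not follow. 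The product-formula remark does not rescue this: it would let you control $\sum_v\log|R|_v$ for a fixed resultant $R$, but the quantity you need to bound from below is $\sum_aG_v(a)$, and the link between them is exactly what has been lost. (There is also a minor definitional mismatch---the paper's $h_{\mathrm{Crit}}$ integrates $\max_a\inf_PG_C(P,v)$, not your $\sum_a$---but the two differ only by a bounded factor, so this is not the real problem.)

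What the paper does instead is avoid the comparison $G_v\approx\log^+$ at the critical points entirely. The key input is an effective Nullstellensatz (Lemma~\ref{lem:nullstellensatz}): the critical \emph{values} $f_{\mathbf s}(s_i)$ cannot all be small relative to $\|\mathbf s\|^d$, because the ideal they generate has the irrelevant ideal as its radical. From this one extracts (Lemma~\ref{lem:whack}) a critical point $\sigma$ with the property that \emph{every} branch $\sigma\to\tau$ satisfies $\log|\tau|_v\ge \tfrac{d}{e}\lambda(C,v)-O(1)$, so that every path from $\sigma$ has already escaped the basin after one step. For such $\sigma$ the uniform estimate of Lemma~\ref{lem:greensprops}\eqref{it:uniformpart} applies to $\sigma(P)$ and gives $G_C(P,v)=\lambda(C,v)+O(1)$ with an absolute $O(1)$, yielding $\Lambda(C,v)\ge\lambda(C,v)-O(1)$. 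This Nullstellensatz step is the idea your sketch is missing; the resultant/Mahler-measure route you describe does not supply it.
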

Theorem~\ref{th:main} is obtained by decomposing both sides into sums of local heights, and proving an analogous inequality at each place. The local inequalities appear in Section~\ref{sec:greens}, and might be of independent interest for applications in the complex or $p$-adic dynamics of correspondences.

The critical height is defined more precisely in Section~\ref{sec:global}, and reduces to that used in~\cite{pcfpn} when $C$ takes the form $y=f(x)$ (this is not quite the same critical height as used in \cite{pcfpoly, barbados}). To aid the reader's intuition, however, we note one of its basic properties. In \cite{corr} we define a canonical height $\hat{h}_C$ on paths associated to a correspondence $C$, which has the property that $\hat{h}_C(P)=0$ if the path $P$ is preperiodic. We will see below that for any assignment of paths $c\mapsto P_c$, where $P_c$ is a path with initial vertex $c$, we have
\[h_{\mathrm{Crit}}(C)\leq \sum_{c\in\operatorname{Crit}(C)} \hat{h}_C(P_c),\]
where $\operatorname{Crit}(C)$ denotes the set of critical points of $C$.
In particular, if all of these paths are preperiodic (a possibility just in case $C$ is PCC), then we must have $h_{\mathrm{Crit}}(C)=0$. It is not clear that the converse holds, and this would be an interesting problem for further study.

Just as the results in \cite{pcfpoly, pcfpn} have applications in the geometric context, the arguments establishing Theorem~\ref{th:heightbound} may be applied over function fields with geometric consequences. 
Given a quasi-projective variety $X/k$, a \emph{family of (variable-separated affine) correspondences} $C/X$ is given by a pair of polynomials $f(z), g(z)\in k[X][z]$. Such a family is said to be \emph{split} if we may take the coefficients of $f$ and $g$ to be constant, after a change of variables, or \emph{isotrivial} if there exists a cover $Y\to X$ such that the base extension of this family to $Y$ is split. For instance, the family
\[y^3+\alpha^{-1}y=\alpha x^5+\alpha^{-1} x\]
 is split if $\alpha\in k[X]$ is a square, and hence its isotriviality is witnessed by some double cover of $X$.
\begin{theorem}\label{th:thurston}
Let $d>e$, let $k$ be an algebraically closed field of characteristic $0$ or $p>d$, let $X/k$ be a quasi-projective variety, and let $C/X$ be a family of correspondences of the above form all of which are PCC. Then the family $C/X$ is isotrivial.
\end{theorem}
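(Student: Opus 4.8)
The plan is to deduce Theorem~\ref{th:thurston} from Theorem~\ref{th:heightbound} by the standard specialization argument, working over the function field $K = k(X)$. First I would reduce to the case where $X$ is a smooth projective curve: a general quasi-projective variety is covered by its curve sections, and isotriviality over every curve through a general point forces isotriviality of the whole family (one can compare the coefficient functions on overlapping curves). So assume $X$ is a smooth projective curve over $k$, and let $C/K$ be the generic fibre, given by $g(y)=f(x)$ with $f, g \in K[z]$. After an affine-linear change of variables over $K$, normalize as in~\eqref{eq:normal}, so the coefficients $a_i, b_j \in K$. The hypothesis is that the specialization $C_t$ is PCC for all $t$ in a Zariski-dense set of $X$ (indeed all $t$), so Theorem~\ref{th:heightbound} applied at each such $t$ gives $h_{\mathrm{Weil}}(C_t) = O(1)$, with the bound depending only on $d$ and $e$, hence independent of $t$.

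The next step is to upgrade this uniform bound on the fibral heights to a statement about the family itself. By the specialization theory of heights (or more directly, by Theorem~\ref{th:main}, which identifies $h_{\mathrm{Weil}}$ with the critical height $h_{\mathrm{Crit}}$, and the latter decomposes into local contributions that vary well in families), the function $t \mapsto h_{\mathrm{Weil}}(C_t)$ grows like $\hat{h}_{\mathscr{X}}(t) \cdot (\text{something})$ where the leading term is governed by the generic height $h_{\mathrm{Weil}}(C/K)$ computed over the function field $K$. Concretely: $h_{\mathrm{Weil}}(C/K)$ is, up to normalization, the degree of the line bundle obtained by pulling back $\mathcal{O}(1)$ on the relevant weighted projective space along the classifying map $X \to \mathbb{P}(\text{weights})$ determined by the coefficient tuple $(a_i, b_j)$. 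If this degree were positive, the fibral heights $h_{\mathrm{Weil}}(C_t)$ would be unbounded as $t$ ranges over $X(\overline{k})$ — contradicting the uniform $O(1)$ bound. Hence the classifying map $X \to \mathbb{P}(\text{weights})$ is constant, i.e.\ $h_{\mathrm{Weil}}(C/K) = 0$.

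It remains to translate "the function-field height of $C$ vanishes" into "the family is isotrivial." Vanishing of $h_{\mathrm{Weil}}(C/K)$ means the point of weighted projective space cut out by $(a_i, b_j)$ is $\overline{k}$-rational after suitable scaling; equivalently, there is a single scalar $\mu \in \overline{K}$ (a root of an appropriate monomial in the $a_i, b_j$, accounting for the weights) such that the rescaled coefficients $\mu^{-w_i} a_i$, $\mu^{-v_j} b_j$ all lie in $\overline{k}$. This $\mu$ lives in a finite extension $K'/K$, corresponding to a finite cover $Y \to X$, and over $Y$ the change of variables $x \mapsto \mu x$, $y \mapsto \mu y$ (or the appropriate variant matching the two sides of~\eqref{eq:normal}) puts $C$ in constant form — exactly the definition of isotriviality, with the example $y^3 + \alpha^{-1}y = \alpha x^5 + \alpha^{-1}x$ in the introduction being the prototype. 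Finally, one handles the positive-characteristic hypothesis $p > d$ by noting it guarantees separability of everything in sight (critical points, the polynomials $f, g$, the covers) so that Theorem~\ref{th:heightbound} and its proof remain valid over function fields of characteristic $p > d$.

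The main obstacle I anticipate is the second step: making precise the claim that a uniform bound on fibral Weil heights forces the generic (function-field) height to vanish, and doing so \emph{without} a Northcott-type input that isn't available over $\overline{k}$. The clean way around this is to lean on Theorem~\ref{th:main}: since $h_{\mathrm{Crit}}$ and $h_{\mathrm{Weil}}$ differ by $O(1)$ uniformly, and $h_{\mathrm{Crit}}$ is built from local (Green's-function-type) contributions whose behaviour in a family is controlled, one shows directly that if the family is non-isotrivial then $h_{\mathrm{Crit}}(C_t)\to\infty$ along some sequence $t$, using that a non-isotrivial family has a place of $K$ where the coefficients genuinely blow up, which produces a non-preperiodic critical path at a nearby specialization. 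Equivalently, run the archimedean/non-archimedean local estimates of Section~\ref{sec:greens} at a place of bad reduction of $C/X$. Either route requires care in tracking the weighted-projective normalization, but no genuinely new idea beyond Theorem~\ref{th:heightbound} and the local height machinery already developed.
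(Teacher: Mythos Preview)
Your main route---deduce Theorem~\ref{th:thurston} from Theorem~\ref{th:heightbound} via specialization of heights---has two problems. First, it is circular relative to the paper's logic: the proof of Theorem~\ref{th:heightbound} over $\CC$ invokes the proof of Theorem~\ref{th:thurston} to reduce from $\CC$ to $\overline{\QQ}$ before any height bound can be applied. Second, and more substantively, your specialization step needs a Weil height on $X(\overline{k})$ and a Northcott-type growth statement for $t\mapsto h_{\mathrm{Weil}}(C_t)$, neither of which is available over an arbitrary algebraically closed $k$. You flag this obstacle yourself and propose a workaround, but the workaround is not a patch on the global argument---it \emph{is} the argument, and it replaces everything above it.

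The paper's proof is exactly your final parenthetical suggestion, executed in one paragraph. Reduce to $X$ a curve, put the family in the normal form $g_{\mathbf{t}}(y)=f_{\mathbf{s}}(x)$, and suppose some $s_i$ or $t_j$ has a pole at a point of the projective closure of $X$. Let $v$ be the corresponding valuation on $k(X)$. This $v$ is non-archimedean and, by the characteristic hypothesis $p=0$ or $p>d$, not $p$-adic for any prime $p\leq d$; hence the implied constants in Lemma~\ref{lem:mainlocal} vanish identically and we get $\Lambda(C,v)=\lambda(C,v)>0$ on the nose. But if the generic fibre is PCC, each critical point admits a preperiodic path $P$ with $G_C(P,v)=0$, forcing $\Lambda(C,v)=0$. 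This contradiction shows all $s_i,t_j$ are regular on the completed curve, hence constant, hence the family is isotrivial. No global heights, no specialization theory, no passage through Theorem~\ref{th:heightbound} or Theorem~\ref{th:main}---just the single local estimate applied once.
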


Theorem~\ref{th:thurston} implies, for instance, that there are no non-trivial algebraic families of PCC correspondences of the form~\eqref{eq:varsep}, a fact which reduces in the case $\deg(g)=1$ to the non-existence of non-trivial families of postcritically finite polynomials.

Moving from correspondences on $\AA^1$ to those on $\PP^1$, we see a different picture. Give two \emph{rational} functions $f(x)$ and $g(y)$, we will call the correspondence $g(y)=f(x)$ on $\PP^1$ a \emph{Latt\`{e}s example} if and only if there exists an elliptic curve $E$, a surjective morphism $\pi:E\to\PP^1$, and two morphisms $\phi, \psi:E\to E$ making the following diagram commute:
\[\xymatrix{
E \ar[d]_{\pi} \ar[r]^{\phi}  & E \ar[d]_{\pi} & E \ar[d]^{\pi} \ar[l]_\psi\\  
\PP^1 \ar[r]_{f} & \PP^1 & \PP^1 \ar[l]^{g}
}\]
It is easy to construct non-trivial holomorphic families of PCC correspondences amongst the Latt\`{e}s examples, for instance taking $\pi$ to be the usual map to the Kummer surface, and $\phi, \psi\in \operatorname{End}(E)$. The following conjecture generalizes a consequence of a well-known theorem of Thurston~\cite{thurston}.

\begin{conjecture}
Any non-isotrivial family of PCC variable-separated correspondences on $\PP^1$ is a family of Latt\`{e}s examples.
\end{conjecture}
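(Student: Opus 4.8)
The plan is to develop, for variable-separated correspondences on $\PP^1$, an analogue of Thurston's rigidity theorem for postcritically finite rational maps, including its treatment of the flexible Latt\`{e}s exceptions; I outline the strategy, since a complete proof appears to need a deformation theory for correspondences that is not yet in place.

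\emph{Reduction.} One first reduces to the case that $X$ is a smooth affine curve (by restricting $C/X$ to a general curve in $X$ and using that isotriviality, and membership in the Latt\`{e}s locus, can be tested on such restrictions) and then, after a finite base change and a birational modification of $X$, arranges that the critical points of $C_t$ together with a choice of preperiodic path issuing from each vary algebraically and that their (finite) forward orbits trace out disjoint sections. This produces a marked finite set $\mathcal V_t\subseteq\PP^1$ with $\operatorname{Crit}(C_t)\subseteq\mathcal V_t$, varying in a family and forward-invariant for the chosen paths. Since the locus of $t$ with $C_t$ a Latt\`{e}s example is Zariski closed (a standard compactness argument with the moduli of the auxiliary elliptic data), it suffices to prove: if $C_t$ is not a Latt\`{e}s example for generic $t$, then $C/X$ is isotrivial.

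\emph{The invariant orbifold and the dichotomy.} To a PCC correspondence $g(y)=f(x)$ with marked data as above one attaches an orbifold $\Ocal$ on $\PP^1$, whose weight at a point is the least common multiple of the local degrees of the branches of the multivalued map $x\mapsto g^{-1}(f(x))$ that carry the distinguished dynamics into or through that point; PCC-ness forces these weights to stabilize, so $\Ocal$ is a compact orbifold, and by construction the correspondence covers $\Ocal$ by itself. A Riemann--Hurwitz computation for $f$ and $g$, in which the hypothesis $\deg g<\deg f$ enters, should yield $\chi(\Ocal)\le 0$, with equality exactly for the four Euclidean signatures. A correspondence covering a Euclidean orbifold by itself lifts, via the torus uniformizing $\Ocal$, to a commuting diagram with an elliptic curve $E$ and maps $\phi,\psi$ covering $f,g$ --- that is, $C_t$ is a Latt\`{e}s example, contrary to hypothesis; and for the finitely many signatures with $\chi(\Ocal)>0$ one expects, as in the classical case, a monomial- or Chebyshev-type normal form, which has no moduli and again forces $C/X$ to be isotrivial. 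Thus one is reduced to the hyperbolic case $\chi(\Ocal)<0$.

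\emph{Rigidity in the hyperbolic case, and the main obstacle.} Here one wants the marked correspondence to be rigid, which together with non-isotriviality would finish the proof. Classically this is where one iterates the Thurston pullback on $\mathrm{Teich}(\PP^1,\mathcal V)$ and shows it strictly contracts the Teichm\"uller metric; the difficulty is that a correspondence is multivalued, so there is no naive pullback operator, and repairing this is the heart of the matter. The proposed remedy is to use the variable-separated form: pass to the infinite fibre-product tower $\cdots\xrightarrow{f}\PP^1\xleftarrow{g}\PP^1\xrightarrow{f}\cdots$ attached to $C$, on which the path dynamics is single-valued, and run the hyperbolic-length contraction estimate there; equivalently, one works on the deformation space of the curve $C$ equipped with its two branched covers $x$ (of degree $d$) and $y$ (of degree $e<d$), the strict inequality $e<d$ being what should make the induced self-map of the deformation space strictly contracting off the Euclidean locus. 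Constructing the correct finite-dimensional deformation space, proving the contraction, and showing that the orbifold obstruction above is the only obstruction is exactly the gap between this outline and a theorem.
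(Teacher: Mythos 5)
This statement is presented in the paper as a conjecture; the paper offers no proof, and the only rigidity result it actually establishes is the affine analogue (Theorem~1.3), which is proved by a completely different, height-theoretic mechanism: at a place $v$ of the base where some coefficient of the normal form fails to be regular, the local escape rate $\Lambda(C,v)$ is shown to be strictly positive, which is incompatible with PCC. Your Thurston-style outline is the natural transcendental approach and you are candid that it is incomplete, but the gaps are more serious than your closing sentence suggests, and they occur before the ``main obstacle'' you identify.

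First, the invariant orbifold is not well-defined from PCC alone. PCC supplies only \emph{one} preperiodic path per critical point; the marked set $\mathcal{V}_t$ swept out by the chosen paths is forward-invariant only along those chosen branches, not under the full multivalued dynamics, so the local degrees of the other branches passing through points of $\mathcal{V}_t$ need not stabilize and the lcm defining your weights can fail to exist. Moreover the assertion that ``the correspondence covers $\mathcal{O}$ by itself'' has no content until a single-valued covering map is produced, which is precisely what is missing. Second, the proposed repair---running the hyperbolic contraction on the infinite fibre-product tower---replaces $\mathrm{Teich}(\PP^1,\mathcal{V})$ by an infinite-dimensional space (the path space $\path$ is not of finite type, as Section~2 of the paper notes in the affine setting), and the classical argument needs finite-dimensionality to convert strict contraction off the Euclidean locus into rigidity. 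Third, the Riemann--Hurwitz computation giving $\chi(\mathcal{O})\le 0$, which is the step where the Latt\`{e}s examples must appear as the Euclidean case and which drives the entire dichotomy, is asserted rather than carried out, and it is not clear how the bookkeeping for the two covers $x$ and $y$ of different degrees reduces to the single-map computation. As it stands the proposal is a research programme, not a proof, and it does not connect to the non-archimedean method by which the paper handles the affine case.
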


Similarly, we posit an arithmetic analogue, generalizing a conjecture of Silverman~\cite[Conjecture~6.30, p.~101]{barbados} which was verified by Benedetto, the author, Jones, and Levy~\cite{pcfrat}. Variable-separated correspondences of bidegree $(d, e)$ on $\PP^1$ are described by pairs of rational functions $f(x), g(y)$, which in turn are parametrized by affine varieties $\operatorname{Hom}_d$ and $\operatorname{Hom}_e$~\cite[\S~1.4]{barbados}. The variety $\operatorname{Corr}_{d, e}=\operatorname{Hom}_d\times \operatorname{Hom}_e$ is naturally acted upon by $\operatorname{PGL}_2^2$ (viewed as the automorphism group of $\PP^1\times\PP^1$), with
\[(\psi, \phi)\cdot (f, g)=(\psi\circ f\circ \phi^{-1}, \psi\circ g\circ\phi^{-1})\]
(note that many elements have stabilizers of positive dimension).
Assuming that it exists, we denote the categorical quotient by $\mathcal{M}_{d, e}=\operatorname{Corr}_{d, e}/\operatorname{PGL}_2^2$ (in the case $e=1$, this construction is described by Silverman~\cite{barbados}, and is shown by Levy~\cite{alon} to result in a rational affine variety).

\begin{conjecture}
The set of points in $\mathcal{M}_{d, e}$ corresponding to postcritically constrained correspondences is contained in the union of the set of Latt\`{e}s examples and a set of bounded height (with respect to some ample class).
\end{conjecture}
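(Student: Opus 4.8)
Since the final statement is a conjecture, what follows is a strategy rather than a proof. The natural approach is to adapt the method of Benedetto, the author, Jones, and Levy \cite{pcfrat} — which settles the case $e=1$ (Silverman's Conjecture~6.30 on PCF rational maps) — replacing rational maps by variable-separated correspondences and the canonical height of a critical orbit by the canonical height $\hat{h}_C$ on paths from \cite{corr}, while importing the local estimates of Section~\ref{sec:greens} at every place where the correspondence has good reduction or is archimedean.

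First one defines a critical height $h_{\mathrm{Crit}}$ on $\operatorname{Corr}_{d,e}$, checks that it is $\operatorname{PGL}_2^2$-invariant, and lets it descend to $\mathcal{M}_{d,e}$. As in the affine case of Section~\ref{sec:global}, the definition should be the $\PP^1$-analogue of the left side of Theorem~\ref{th:main}: for each $c\in\operatorname{Crit}(C)$ pick a path $P_c$ originating at $c$, form $\sum_{c\in\operatorname{Crit}(C)}\hat{h}_C(P_c)$, and either minimize over choices of paths or — following \cite{pcfrat} — recast this through escape rates so that it is visibly a Weil height for an explicit divisor class on a projective compactification $\overline{\mathcal{M}}_{d,e}$. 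That $h_{\mathrm{Crit}}$ vanishes on PCC correspondences is then immediate from the inequality quoted in the introduction, taking each $P_c$ preperiodic. Next, decompose $h_{\mathrm{Crit}}$ and a fixed ample height $h_{\mathrm{amp}}$ on $\overline{\mathcal{M}}_{d,e}$ into local heights $\sum_v\lambda_{\mathrm{Crit},v}$ and $\sum_v\lambda_{\mathrm{amp},v}$, exactly as in the proof of Theorem~\ref{th:main}, with the local critical heights read off from the local Green's functions for paths over $\CC_v$.

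The analytic heart is a local comparison $\lambda_{\mathrm{Crit},v}(C)\ge\lambda_{\mathrm{amp},v}(C)-O(1)$, constants depending only on $d$ and $e$, valid at every place whose reduction is not $v$-adically close to a Latt\`es example; at archimedean places and places of good reduction this should come from the inequalities of Section~\ref{sec:greens} much as in the affine proof. The new phenomenon, with no counterpart on $\AA^1$ — and the reason Latt\`es examples must be excluded — is that at a place of bad reduction such a correspondence can behave $v$-adically like one coming from an elliptic curve, where a critical point may have a very small forward orbit, so that $\lambda_{\mathrm{Crit},v}$ fails to dominate $\lambda_{\mathrm{amp},v}$; this is the exact analogue of the attracting-cycle mechanism of \cite{pcfrat}. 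One thus needs a classification of the $v$-adic degeneration types of variable-separated correspondences, a proof that the Latt\`es (elliptic) type is the only one for which the comparison fails, and a quantitative bound on the deficit in terms of the $v$-adic distance to the Latt\`es locus.

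Summing over all places gives $h_{\mathrm{amp}}(C)\le h_{\mathrm{Crit}}(C)+\sum_v(\text{local deficit})+O(1)$, and to finish one must show that unless $C$ is globally a Latt\`es example the total deficit is at most $(1-\delta)h_{\mathrm{amp}}(C)+O(1)$ for some fixed $\delta>0$, so that it can be absorbed on the left. I expect this globalization to be the main obstacle: it is the arithmetic shadow of the previous conjecture (non-isotrivial PCC families on $\PP^1$ are Latt\`es families) and requires a Thurston-type rigidity for correspondences, namely that being $v$-adically Latt\`es-like at many places forces $C$ itself to be a Latt\`es example. In \cite{pcfrat} the corresponding input rests on the explicit structure of flexible Latt\`es maps and on their forming precisely the locus where the critical height is not comparable to an ample height; for correspondences one would first have to establish such a structural description, after which — modulo the construction and local comparisons above — the conjecture would reduce to this rigidity statement, which I regard as the crux.
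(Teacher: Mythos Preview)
The paper does not prove this statement: it is stated as a conjecture, with no proof or proof sketch offered beyond the remark that it generalizes Silverman's conjecture \cite[Conjecture~6.30]{barbados}, whose case $e=1$ was settled in \cite{pcfrat}. You have correctly identified this and supplied a strategy rather than a proof, so there is nothing in the paper to compare your argument against.

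Your outline is the natural extrapolation of \cite{pcfrat} and of the present paper's affine method, and the paper's author would presumably agree that this is the expected line of attack. That said, a few obstacles deserve more weight than you give them. First, the paper itself hedges on whether $\mathcal{M}_{d,e}$ exists as a scheme (``Assuming that it exists\ldots''), since the $\operatorname{PGL}_2^2$-action has stabilizers of positive dimension; constructing a compactification carrying an ample height is a genuine prerequisite, not a formality. Second, your claim that the local estimates of Section~\ref{sec:greens} should transfer ``at every place of good reduction or archimedean'' understates the difficulty: those estimates are specific to the affine setting with $\deg(g)<\deg(f)$ and rely on the escape-rate picture near $\infty$, which has no direct analogue for a correspondence on $\PP^1$ with no totally invariant point. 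Third, what \cite{pcfrat} actually uses is not the strong rigidity statement you isolate (being $v$-adically Latt\`es-like at many places forces globally Latt\`es), but a softer input: a classification of rational maps with a $p$-adically attracting critical cycle, combined with a counting argument bounding how many places can be bad. Reformulating your globalization step along those more modest lines is likely both more tractable and closer to what would actually work.
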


At this point, the reader might be forgiven for wondering whether or not many of the results above are vacuous. One checks, for instance, that $y^2=x^3+1$ defines a PCC correspondence on $\AA^1$, but this is just one example. Given $d>e$, are there in fact infinitely many (pairwise non-isomorphic) PCC correspondences of the form~\eqref{eq:normal}? We give an answer in the affirmative at least when $d$ is prime. 

\begin{theorem}\label{th:notallobvious}
Let $p>e$, with $p$ prime. Then there exist infinitely many distinct values $c\in\CC$ such that the correspondence on $\AA^1$ defined by $y^e=x^p+c$ is post-critically constrained.
\end{theorem}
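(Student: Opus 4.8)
The plan is to reduce the postcritical condition to a condition on the single point $0$, express the resulting parameter loci as zero sets on a tower of ``orbit curves,'' and count using a Riemann--Roch estimate whose geometry is controlled by the hypothesis that $p$ is prime. One may assume $c\neq 0$ (a single parameter is irrelevant) and $e\geq 2$, the case $e=1$ being the classical statement that $x^p+c$ is postcritically finite for infinitely many $c$.

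The first step is to identify $\operatorname{Crit}(C)$ for $C\colon y^e=x^p+c$. A local analysis at each point of $C$ (using $p\geq 2$ and $\gcd(p,e)=1$) shows that the critical points are exactly $x=0$ together with the $p$ roots of $x^p+c$. Each such root $\zeta$ satisfies $0^e=\zeta^p+c$, so the step $\zeta\to 0$ is legal and any preperiodic path out of $0$ prolongs to one out of each $\zeta$; conversely $0$ is itself critical. Hence $C$ is PCC if and only if $0$ admits a preperiodic path, and it suffices to produce infinitely many $c$ for which $0$ admits a \emph{periodic} path. For $N\geq 2$, let $W_{N-1}$ be the affine curve parametrising length-$(N-1)$ paths $0=x_0,x_1,\dots,x_{N-1}$ out of $0$: it carries coordinates $x_1,\dots,x_{N-1}$ subject to $x_{i+1}^e=x_i^p+c$ with $c:=x_1^e$, is finite of degree $e^{N-2}$ over the $x_1$-line and of degree $e^{N-1}$ over the $c$-line. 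The condition that $0$ lie on a periodic path of period dividing $N$ is exactly the vanishing on $W_{N-1}$ of $\psi:=x_{N-1}^p+c$, which closes the orbit via $x_{N-1}\to 0$. Thus the set of admissible parameters is $S_N:=c\big(\{\psi=0\}\big)$, and $\bigcup_N S_N$ lies in the PCC locus.

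The heart of the matter is to show $|S_N|\to\infty$, and this is where primality enters. At a place of $W_{N-1}$ above $x_1=\infty$, the recursion together with $p>e$ forces $x_k^p$ to dominate $c$ at every stage, while $\gcd(p,e)=1$ forces $e^{N-2}$ to divide the ramification index there; since the total ramification index above $\infty$ equals $e^{N-2}$, there is a unique point $Q$ at infinity (so in particular $W_{N-1}$ is irreducible), and one computes $\ord_Q(x_{N-1})=-p^{N-2}$, hence $\ord_Q(\psi)=-p^{N-1}$. As $\psi$ is regular on the affine part, its divisor of zeros has degree $p^{N-1}$, so $\deg\{\psi=0\}=p^{N-1}$. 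Combining this with the fact that $W_{N-1}$ is finite over the $c$-line of degree $e^{N-1}$ (so the fibres of $\{\psi=0\}\to\AA^1_c$ are controlled) gives $|S_N|\geq(p/e)^{N-1}\to\infty$. Therefore $\bigcup_N(S_N\setminus\{0\})$ is infinite, and by the first step every such $c$ yields a PCC correspondence.

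The main obstacle is the step from $\deg\{\psi=0\}=p^{N-1}$ to the bound $|S_N|\geq(p/e)^{N-1}$: a priori one controls only the degree of the zero scheme of $\psi$, not its number of points nor the size of its fibres over the $c$-line beyond the scheme-theoretic bound $e^{N-1}$, so one must rule out the parameters $c$ concentrating on too few values. I expect this to need a local analysis of $\psi$ near its zeros --- showing that away from $c=0$ the function $\psi$ vanishes simply outside a ramification locus whose total contribution can be bounded --- together with the flatness of $W_{N-1}$ over the $c$-line; an analogue of Gleason's separability theorem for these orbit relations would give the bound immediately. The other delicate point is the valuation computation at infinity, which is precisely where $p$ prime and $p>e$ are used: for composite $d=p$, or for $d\leq e$, the orbit curve need not be irreducible nor have a single point at infinity, and the clean pole order for $\psi$ can fail.
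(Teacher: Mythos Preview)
Your reduction to the single critical point $0$, the construction of the orbit curves $W_{N-1}$, and the valuation analysis at the unique point $Q$ over $x_1=\infty$ are all correct; in particular the computation $\deg\{\psi=0\}=p^{N-1}$ is fine, and this is indeed where $\gcd(p,e)=1$ and $p>e$ enter. The gap you flag, however, is genuine and is not closed by the fibre bound you invoke. The degree $p^{N-1}$ counts $\sum_P\ord_P(\psi)$, the orders of vanishing of $\psi$ on the curve $W_{N-1}$; to conclude $|S_N|\geq (p/e)^{N-1}$ you would need $\sum_{c(P)=c_0}\ord_P(\psi)\leq e^{N-1}$ for each $c_0$. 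But the length of the scheme-theoretic fibre of $\{\psi=0\}$ over $c_0$, namely $\dim_{\CC}\mathcal{O}_{W_{N-1}}/(\psi,c-c_0)$, is what is bounded by $e^{N-1}$, and this can be strictly smaller than $\sum_{c(P)=c_0}\ord_P(\psi)$ (already on $\AA^1_t$ with $c=t^2$ and $\psi=t^3$, the fibre length over $c=0$ is $2$ while $\ord_0(\psi)=3$). So nothing in your setup prevents the $p^{N-1}$ zeros of the resultant $f_N(c)\in\CC[c]$ from concentrating on a fixed finite set with unbounded multiplicities as $N\to\infty$. The Gleason-type separability statement you propose would close the gap, but proving it for $e\geq 2$ is essentially the content of the theorem.

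The paper sidesteps this multiplicity issue by a completely different route. It reduces modulo $p$: over $\FF_p$ the recursion collapses via Frobenius to $f_{n+1}(c)=c^{p^n}-f_n(c)^e$, from which one computes exactly, for each prime $\pf$ of $\FF_p[c]$, the valuation $v_\pf(f_n)$ in terms of the first index $r$ with $\pf\mid f_r$. A degree count of Zsygmondy type then shows that for all large $n$ the polynomial $f_n\in\FF_p[c]$ has a primitive prime factor, hence a root $c$ giving a critical cycle of exact length $n$; lifting back to $\CC$ via the integrality of $f_n$ yields infinitely many distinct PCC parameters. Your geometric approach over $\CC$ is more direct in spirit, but as it stands it needs an independent multiplicity bound that the paper never has to confront.
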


The critical points of $y^3=x^d+c$ turn out to be $x=0$ and the roots of $x^d+c$, and so the correspondence is PCC if and only if there is a preperiodic path beginning at 0.
Note that it is clear, for each $n$, that there is a $c\in \CC$ such that some critical path of $y^e=x^d+c$ returns to $0$ after $n$ steps. What is not \emph{a priori} clear is that these $c$ are distinct as $n$ increases.

What we prove is in fact much stronger than Theorem~\ref{th:notallobvious}. Specifically, we show that for sufficiently large $m$ there exists a value $c\in\CC$ such that the critical point $x=0$ of $y^e=x^p+c$ lies in a periodic path of length exactly $m$.  Indeed, we prove that this holds over the algebraically closed field $\overline{\FF_p}$, rather than $\CC$, although it may not be \emph{a priori} clear that this is stronger.

We lastly bring our attention to the complex dynamics of correspondences, and prove a result analogous to the compactness of the Mandelbrot set (i.e., the connectedness locus). Specifically, let $S_{d, e}\subseteq \CC^{d+e-2}$ denote the set of tuples of coefficients for which the correspondence defined in \eqref{eq:normal} has the property that each critical point admits a bounded path (a set which clearly contains all PCC examples). In Figure~\ref{fig:m}, we depict the analogue of the Mandelbrot set for the family of correspondences $y^2=x^3+c$ (a 
closed subset of $S_{3, 2}$ of one complex dimension).

\begin{figure}\label{fig:m}
\includegraphics[scale=0.33]{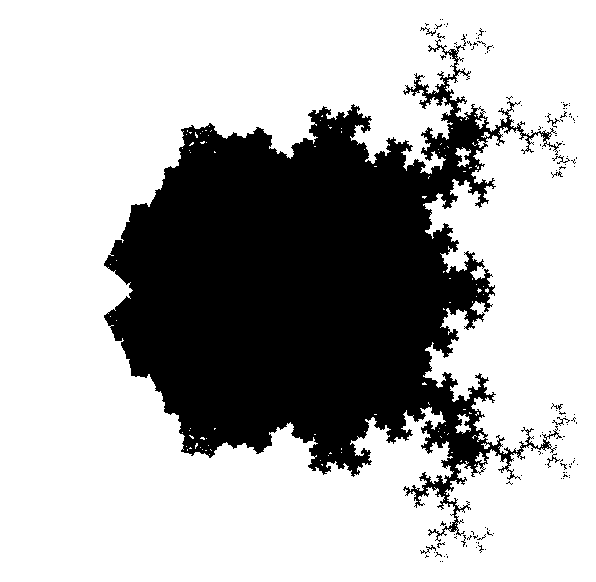}
\caption{Values $c\in\CC$ for which $y^2=x^3+c$ admits a bounded critical path}
\end{figure}

\begin{theorem}\label{th:mandelbrot}
The set $S_{d, e}$ is compact.
\end{theorem}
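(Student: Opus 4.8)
Recall that $S_{d,e}$ consists of the coefficient tuples $(a_i,b_j)$ for which the correspondence $C$ of~\eqref{eq:normal} has the property that every $c\in\operatorname{Crit}(C)$ is the initial vertex of some \emph{bounded} path, i.e.\ a path $x_0=c,x_1,x_2,\dots$ with $(x_n,x_{n+1})\in C$ and $\sup_n|x_n|<\infty$. I would prove compactness by establishing boundedness and closedness separately, the common tool being a uniform escape estimate. Namely: for every compact $K\subseteq\CC^{d+e-2}$ there are $R=R(K)$ and $\kappa=\kappa(K)>0$ with $\kappa R^{d/e-1}>1$ such that, for $C$ with coefficients in $K$, any $(x,y)\in C$ with $|x|>R$ satisfies $|y|\ge\kappa|x|^{d/e}$. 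This is elementary: once $|x|$ exceeds a bound depending on $\sum|a_i|$ one has $|f(x)|\ge\frac12|x|^d$, and once $|y|$ exceeds a bound depending on $\sum|b_j|$ one has $|g(y)|\le 2|y|^e$; combining these with $g(y)=f(x)$ and $d>e$ gives the claim after enlarging $R$. The consequence I will use repeatedly is that, for $C$ with coefficients in $K$, a path that ever leaves the closed disc $\overline{D}_R=\{|z|\le R\}$ has $|x_{n+m}|\to\infty$ (at rate $(d/e)^m$) and is therefore unbounded; hence every bounded path of $C$ lies entirely in $\overline{D}_R$, and conversely any path contained in $\overline{D}_R$ is bounded.

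\textbf{Closedness.} Let $C_k\to C$ with each $C_k\in S_{d,e}$; I must show $C\in S_{d,e}$. Fix a compact $K$ containing all the $C_k$ and $C$ and the corresponding escape radius $R$. First, the critical locus is well-behaved in families: one checks that $c\in\operatorname{Crit}(C)$ if and only if $f'(c)=0$ or $f(c)$ is a critical value of $g$, so that $\operatorname{Crit}(C)$ is the zero set of $\Phi_C(x)=f'(x)\prod_{v}(f(x)-v)$, where $v$ runs over the roots of $\operatorname{Res}_y(g(y)-t,\,g'(y))\in\CC[t]$ (a polynomial in $t$ of degree exactly $e-1$ with leading coefficient $\pm e^{e-1}$). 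Since $f$, $g$ are monic, $\Phi_{C_k}$ converges coefficientwise to $\Phi_C$, all of these polynomials having the same degree $(d-1)+d(e-1)$ and leading coefficient $d$; hence by continuity of roots every $c\in\operatorname{Crit}(C)$ is a limit of critical points $c_k\in\operatorname{Crit}(C_k)$. Now for each $k$, since $C_k\in S_{d,e}$, the point $c_k$ is the start of a bounded path $x_0^{(k)}=c_k,x_1^{(k)},x_2^{(k)},\dots$, which by the escape estimate lies in $\overline{D}_R$. Because $\overline{D}_R$ is compact, a diagonal subsequence makes $x_n^{(k)}\to x_n\in\overline{D}_R$ for every $n$; then $x_0=\lim_k c_k=c$, and taking limits in $g_k(x_{n+1}^{(k)})=f_k(x_n^{(k)})$ (using uniform convergence of $f_k,g_k$ on $\overline{D}_R$) yields $g(x_{n+1})=f(x_n)$. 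Thus $(x_n)_n$ is a bounded path of $C$ beginning at $c$. As $c\in\operatorname{Crit}(C)$ was arbitrary, $C\in S_{d,e}$.

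\textbf{Boundedness.} Here I would invoke the archimedean local inequality of Section~\ref{sec:greens}, i.e.\ the complex-place ingredient of Theorem~\ref{th:main}. It supplies a constant $\gamma(d,e)$ such that, for every correspondence $C$ of the given form and every assignment of paths $c\mapsto P_c$ (with $P_c$ starting at $c$), the archimedean local Weil height $h_{\mathrm{Weil},\infty}(C)$ is at most $\sum_{c\in\operatorname{Crit}(C)}G_{C,\infty}(P_c)+\gamma(d,e)$, where $G_{C,\infty}$ is the archimedean local height on paths (the complex component of $\hat h_C$). By the escape estimate above, $G_{C,\infty}(P)=\lim_n (e/d)^n\log^+|x_n|$ exists and equals $0$ precisely when $P$ is bounded. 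Thus if $C\in S_{d,e}$ I may choose each $P_c$ bounded, so the right-hand side is just $\gamma(d,e)$, and hence $h_{\mathrm{Weil},\infty}(C)$ — equivalently the maximum of $\log\max(1,|a_i|)$ and $\log\max(1,|b_j|)$, up to the weights and an $O(1)$ — is bounded in terms of $d,e$ only. So $S_{d,e}$ is bounded in $\CC^{d+e-2}$, and together with closedness this gives compactness.

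\textbf{Main obstacle.} The closedness half is essentially routine once the uniform escape radius is in hand (the only mild care being the continuity of the critical locus). The real work is the boundedness half: extracting from Section~\ref{sec:greens} precisely the archimedean statement above with the correct normalization $(e/d)^n$ for the local height on paths, and verifying that this local height vanishes on \emph{bounded} (not merely preperiodic) paths. A naive one- or two-step escape argument does not suffice, since near a critical point the escape radius $R(C)$ can grow with $\|C\|$ faster than an escaping orbit does over its first few steps; one genuinely needs the refined local Green's-function analysis to see that not every critical point can carry a bounded path once the coefficients are large.
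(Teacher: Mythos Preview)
Your argument is correct, and for boundedness you are doing exactly what the paper does: invoking the archimedean case of Lemma~\ref{lem:mainlocal} (together with Lemma~\ref{lem:basinboundislocalheight}) to conclude that $\lambda(C,v)\le\Lambda(C,v)+O_{d,e}(1)$, and then observing that $\Lambda(C,v)=0$ on $S_{d,e}$ because $G_C(P,v)=0$ for bounded paths. Your identification of this as the non-trivial half is also right.

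The genuine difference is in closedness. The paper proves Lemma~\ref{lem:cts}, that $(\mathbf{s},\mathbf{t})\mapsto\Lambda(C_{\mathbf{s},\mathbf{t}},v)$ is continuous (as a uniform-on-bounded-sets limit of the finite-stage functions $\ell_n$), and then simply observes that $S_{d,e}=\Lambda^{-1}(0)$. You instead argue directly: a uniform escape radius on a compact parameter set traps all bounded paths in a fixed disc, critical loci vary continuously because $\Phi_C$ has fixed degree and leading coefficient, and a diagonal extraction produces a limiting bounded path. Your route is more elementary and self-contained (it never needs $\Lambda$ or its continuity), while the paper's route is more conceptual and yields a reusable statement (continuity of $\Lambda$) that one might want anyway. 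Both are short; neither dominates the other. One small remark: the paper's Lemmas~\ref{lem:basinboundislocalheight} and~\ref{lem:mainlocal} are stated in the $(\mathbf{s},\mathbf{t})$ coordinates of~\eqref{eq:normalform}, whereas $S_{d,e}$ and your argument live in the $(a_i,b_j)$ coordinates of~\eqref{eq:normal}; the translation is the elementary-symmetric-function map, which is proper, so boundedness transfers, but it is worth saying so explicitly.
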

In Section~\ref{sec:greens} we describe an escape-rate function for paths which generalizes that normally used in polynomial dynamics. If we write $G_{\mathrm{min}}(a)$ for the minimum escape rate of any path starting at $a\in \CC$, and then $\Lambda(C)=\sum_{c\in \mathrm{Crit}(C)}G_{\mathrm{min}}(c)$, then $\Lambda(C)=0$ exactly on the set $S_{d, e}$. Note that, in the case $e=1$, $\Lambda$ essentially reduces to the Lyapunov exponent~\cite{mn, ma, prz} (see also~\cite{demarco}).

%
%\begin{remark}
%There is perhaps a more natural analogue of the Mandelbrot set, which is somewhat more difficult to define. In particular, given a correspondence $C$ and a point $a\in\CC$ there is a natural probability measure on the fibre $\pi^{-1}(a)$ of the path space above $a$, and $G:\path^{-1}(a)\to \RR$ is a non-negative random variable. In particular, it makes sense to discuss the probability $P(G>0)$ of a path with $\pi(a)$ escaping to infinity. We may then define $M(C)$ to be the average of this probability over all critical points, giving a function $M:\mathbf{P}_{e, d}\to [0, 1]$. The set mentioned in the theorem above is simply $M^{-1}(0)$.
%\end{remark}
%\begin{center}
%\includegraphics[scale=0.8]{filled.png}
%\end{center}

%%%%%%%%%%%%%%%%%%%%%%%%%%%%%%%%%%%%%%%%%%%%%%%%%%%%%%%%%%%%%%%%%%%%%%
%%%%%%%%%%%%%%%%%%%%%%%%%%%%%%%%%%%%%%%%%%%%%%%%%%%%%%%%%%%%%%%%%%%%%%
%%%%%%%%%%%%%%%%%%%%%%%%%%%%%%%%%%%%%%%%%%%%%%%%%%%%%%%%%%%%%%%%%%%%%%
%%%%%%%%%%%%%%%%%%%%%%%%%%%%%%%%%%%%%%%%%%%%%%%%%%%%%%%%%%%%%%%%%%%%%%
%%%%%%%%%%%%%%%%%%%%%%%%%%%%%%%%%%%%%%%%%%%%%%%%%%%%%%%%%%%%%%%%%%%%%%
%%%%%%%%%%%%%%%%%%%%%%%%%%%%%%%%%%%%%%%%%%%%%%%%%%%%%%%%%%%%%%%%%%%%%%
%%%%%%%%%%%%%%%%%%%%%%%%%%%%%%%%%%%%%%%%%%%%%%%%%%%%%%%%%%%%%%%%%%%%%%
%%%%%%%%%%%%%%%%%%%%%%%%%%%%%%%%%%%%%%%%%%%%%%%%%%%%%%%%%%%%%%%%%%%%%%

\section{Variable-separated affine correspondences}\label{sec:geom}

In this section we lay down the basics of variable-separated polynomial correspondences, working over an arbitrary algebraically closed field $k$. Let $g(z), f(z)\in k[z]$ be any polynomials. These polynomials define a \emph{correspondence} 
\[C=\{(a, b):g(b)=f(a)\}\subseteq \AA^2_k.\]
We will refer to the pair $(\deg(f), \deg(g))$ of integers as the \emph{bidegree} of the correspondence. We will say that a point $a\in \AA^1(k)$ is a \emph{critical point} of $C$ if and only if $f'(a)=0$, or $g'(b)=0$ for some $(a, b)\in C$. In other words, $a$ is a critical point if and only if there is a point of $C$ above $a$ at which one of the coordinate projections ramifies. The set of such points will be denoted by $\operatorname{Crit}(C)$.

We now recall the space of paths for the correspondence $C$. This is constructed in~\cite{corr}, but we note that the construction in the affine setting is even more direct: if $A_n$ is the coordinate ring of the affine variety parametrizing paths of length $n$, then we may simply take $\path=\operatorname{Spec}(\operatorname{colim} A_n)$, under the natural maps. In any case, for the purpose of all that follows, the reader could simply consider $\path$ to be a set of sequences as in the introduction, ignoring any geometric structure.
 
\begin{theorem}
\label{prop:paths}
Let $C$ be a correspondence on $\AA^1$. Then there exists a separated, integral, affine $k$-scheme $\path$, surjective morphisms $\pi:\path\to \AA^1$, $\epsilon:\path\to C$, and a finite surjective morphism $\sigma:\path\to \path$ making the following diagram commute.
\[\xymatrix{
\path \ar[d]_{\pi} \ar[rr]^{\sigma} \ar[dr]^{\epsilon} &  &  \path \ar[d]^\pi\\  
\AA^1 & C \ar[l]^x \ar[r]_y & \AA^1
}\]
\end{theorem}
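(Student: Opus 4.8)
The plan is to build $\path$ as a decreasing limit of the affine varieties parametrising finite paths, exactly as indicated before the statement. Write $d=\deg(f)$ and $e=\deg(g)$. For $n\geq 0$ let $X_n\subseteq\AA^{n+1}$ be the closed subscheme cut out by $g(x_{i+1})-f(x_i)$ for $0\leq i<n$, and let $A_n$ be its coordinate ring, so $A_0=k[x_0]$ and $X_1=C$. After rescaling we may take $f$ and $g$ monic; then $g(x_{n+1})-f(x_n)$ is monic of degree $e$ in $x_{n+1}$, so each truncation $A_n\hookrightarrow A_{n+1}$ makes $A_{n+1}$ a free $A_n$-module of rank $e$, in particular is injective, and we set $A:=\operatorname{colim}A_n=\bigcup_n A_n$ and $\path:=\Spec A$. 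This makes $\path$ affine, and the spectrum of any ring is separated. Take $\pi$ induced by $k[x_0]=A_0\hookrightarrow A$; take $\epsilon$ induced by the isomorphism $k[C]=k[a,b]/(g(b)-f(a))\xrightarrow{\ \sim\ }A_1$ sending $a\mapsto x_0$, $b\mapsto x_1$, followed by $A_1\hookrightarrow A$; and take $\sigma$ induced by the index shift $\sigma^\sharp\colon A\to A$, $x_i\mapsto x_{i+1}$, which is well defined because it carries each relation $g(x_{i+1})-f(x_i)$ to $g(x_{i+2})-f(x_{i+1})$. Commutativity of the diagram is then a one-line check on coordinate rings: both routes $\AA^1\to\path$ in the left triangle send the coordinate to $x_0$, and both routes in the outer square send it to $x_1$.

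Next I would dispatch the finiteness and surjectivity assertions. The map $\sigma$ is finite: $A=\sigma^\sharp(A)[x_0]$ and $x_0$ is integral over $\sigma^\sharp(A)$ because $f(x_0)=g(x_1)\in\sigma^\sharp(A)$ with $f$ monic; and $\sigma^\sharp$ is injective, since on each $A_n$ it identifies $A_n$ with the subalgebra generated by $x_1,\dots,x_{n+1}$ in $A_{n+1}$ (over which $A_{n+1}$ is free of rank $d$ via $x_0$). Hence $\sigma$ is dominant, and being finite it is closed, so surjective. For $\pi$ and $\epsilon$: each structure map $X_{n+1}\to X_n$ is a base change of $x\colon C\to\AA^1$, which is finite and surjective over the algebraically closed field $k$; so over a fixed point the fibres of the $X_n$ form an inverse system of non-empty finite sets with surjective transitions, and the corresponding fibre of $\path=\varprojlim X_n$ — the spectrum of the non-zero colimit of their coordinate rings — is non-empty.

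The remaining, and only substantial, point is that $\path$ is integral; since the transitions $A_n\hookrightarrow A_{n+1}$ are injective this amounts to showing every $A_n$ is a domain, i.e.\ every $X_n$ is integral. One has $X_0=\AA^1$ and $X_n=X_{n-1}\times_{\AA^1}C$, the fibre product of $X_{n-1}\xrightarrow{x_{n-1}}\AA^1$ with $C\xrightarrow{x}\AA^1$; so, assuming $X_{n-1}$ integral, $X_n$ is integral exactly when the generic fibre of $X_n\to X_{n-1}$ is irreducible, that is, when $g(T)-f(x_{n-1})$ is irreducible over $K:=k(X_{n-1})$. Integrality of $C$ is precisely irreducibility of $g(T)-f(x_{n-1})$ over the subfield $k(x_{n-1})$; with $F$ its (Galois) splitting field over $k(x_{n-1})$, it then suffices to see that $F$ and $K$ are linearly disjoint over $k(x_{n-1})$, i.e.\ $F\cap K=k(x_{n-1})$. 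I would obtain this from a ramification comparison on $\PP^1_{x_{n-1}}$: $F/k(x_{n-1})$ is branched only above the points lying, via $f$, over the critical values of $g$, together with $\infty$; whereas, running the tower $X_{n-1}\to\AA^1_{x_{n-2}}\to\cdots\to\AA^1_{x_0}$ backwards, $K/k(x_{n-1})$ is branched only above the finite locus built from the critical values of $f$ by pulling back through the maps of that tower. When these two loci are disjoint, $F\cap K$ is everywhere unramified and so equals $k(x_{n-1})$, closing the induction.

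The hard part is that this disjointness is not automatic. For carefully chosen $f$ and $g$ — for instance $f(x)=x^4+4x^2$ and $g(y)=y^2$, where $C$ is integral but $f(x_1)=\bigl(x_1(x_0^2+2)\bigr)^2$ in $k(C)$, so $g(T)-f(x_1)$ already factors — the two loci meet and $X_2$ is reducible, so the induction above has to be supplemented in general, as is carried out in~\cite{corr}. In the situations relevant to the applications the obstruction disappears: when $\gcd(d,e)=1$, for example, $X_n\to\AA^1_{x_0}$ has a single point over $\infty$, totally ramified of degree $e^n$, so its monodromy contains an $e^n$-cycle and $X_n$ is irreducible at once. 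Everything outside this irreducibility question is formal, so that is where I expect the real work to lie.
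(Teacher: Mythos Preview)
Your construction is exactly the one the paper sketches: it does not prove the theorem at all, but simply records $\path=\Spec(\operatorname{colim}A_n)$ and defers everything to~\cite{corr}. So on the level of ``approach'' you are in agreement with the paper, and the formal verifications you carry out (affineness, separatedness, the definitions of $\pi,\epsilon,\sigma$, finiteness and surjectivity of $\sigma$) are all correct and more than the paper itself supplies here.

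Where you go further than the paper is precisely the right place: integrality. Your counterexample $f(x)=x^4+4x^2$, $g(y)=y^2$ is genuine and shows that the naive colimit is \emph{not} integral in general, since $f(x_1)=(x_1(x_0^2+2))^2$ in $k(C)$ and hence $X_2$ is reducible. This means the one-line construction the paper records cannot literally be the whole story for arbitrary variable-separated $C$; the integral $\path$ promised in the statement must come from the more careful construction in~\cite{corr} (e.g.\ by passing to a suitable component), not from the raw colimit. Your ramification/monodromy argument for the coprime case $\gcd(d,e)=1$ is a clean salvage for the situations the paper actually uses, and your diagnosis that ``everything outside this irreducibility question is formal'' is exactly right.
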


The construction of the path space replaces our correspondence with a single-valued dynamical system $\sigma:\path\to\path$, but at some geometric cost (in the present context, $\path$ is not generally of finite type).
A path $P\in\path$ will be called \emph{preperiodic} if and only if $\sigma^n(P)=\sigma^m(P)$ for some $n\neq m$, and the point $a\in\AA^1$ will be called \emph{constrained} if there is at least one preperiodic orbit in the fibre $\path_a=\pi^{-1}(a)$ consisting of paths beginning at $a$. Note that, although the conditions of an orbit being \emph{finitely supported} or \emph{repetitive} (defined in \cite{corr}) are strictly weaker than that of being preperiodic, the existence of a repetitive orbit in $\path_a$ ensures the existence of a preperiodic one, and so the meaning of \emph{constrained} is not changed if we swap any of these terms.

The correspondence $C$ will be called \emph{post-critically constrained} (PCC) if and only if every critical point is constrained. Note that in the special case of correspondences $y=f(x)$, each fibre $\path_a$ consists of a single point, the forward orbit of $a$, and so the correspondence is PCC if and only if $f$ is post-critically finite in the usual sense. 

\begin{remark}
Counted with multiplicity, there are $de-1$ critical points of $C$, but only $d+e-2$ degrees of freedom in specifying $g$ and $f$, once the obvious changes of variables have been taken into account (see below). It might seem, then, that PCC correspodences are overconstrained when $e\geq 2$, and that there should be very few of them. However, it is easy to see that $C$ is PCC as long as $\path_a$ contains a preperiodic path whenever $f'(a)g'(a)=0$, and there are $d+e-2$ solutions to this equation. A naive dimension count, then, would suggest that there are infinitely many variable-separated PCC correspondences over any algebraically closed field.
\end{remark}

We now define the appropriate notion of equivalence of correspondences of the sort under consideration. 
Note that the curve $g(y)=f(x)$ is the same as the curve $\alpha g(y)+\beta = \alpha f(x)+\beta$, for $\alpha\neq 0$, and hence the action of the affine group on $(g, f)$ by post-composition leaves the resulting correspondence unchanged. Our notion of equivalence should reflect this. Similarly, the correspondence defined by $g(y)=f(x)$ is related by an obvious isomorphism to that defined by $g(\alpha y+\beta)=f(\alpha x+\beta)$, and so pre-composition by the affine group offers another equaivalence. 
We will say that the correspondence defined by the polynomials $(g, f)$ is \emph{equivalent} to that defined by $(G, F)$, and write $(g, f)\sim (G, F)$, if and only if there exist affine transformations $\phi, \psi$ with $\phi\circ g=G\circ\psi$ and $\phi\circ f=F\circ \psi$. Note that, since the affine linear group is not reductive, constructing a moduli space of correspondences modulo equivalence is somewhat tricky. Leaving this for future work, we content ourselves here with simply working nearly up to change of variables by choosing a suitable normal form.

\begin{lemma}
Every variable-separated polynomial correspondence with $\deg(g)<\deg(f)$ is equivalent (over an algebraically closed field) to one of the form
\begin{multline}
\label{eq:normalform}
\frac{1}{e}y^e-\frac{1}{e-1}(t_1+\cdots +t_{e-1})y^{e-1}+\cdots \pm t_1\cdots t_{e-1}y\\=\frac{1}{d}x^d-\frac{1}{d-1}(s_1+\cdots +s_{d-1})x^{d-1}+\cdots \pm s_1\cdots s_{d-1}x.
\end{multline}
\end{lemma}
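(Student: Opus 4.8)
The plan is to exploit the two yoked families of affine moves allowed by the equivalence relation — pre-composition (replacing $(g,f)$ by $(g\circ\theta,\ f\circ\theta)$) and post-composition (replacing it by $(\eta\circ g,\ \eta\circ f)$), for arbitrary affine $\theta,\eta$ — to normalize, in turn, the constant terms and then the leading coefficients of the pair $(g,f)$; once those are fixed, the displayed form is automatic. Since the affine group is closed under composition and inversion, chaining such moves produces a single pair $\phi,\psi$ witnessing the equivalence.

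\emph{Step 1 (killing the constant terms).} Here is the one place the hypothesis $\deg(g)<\deg(f)=d$ is used: the difference $f-g$ has degree $d\ge 1$, hence a root $c\in k$, and geometrically $(c,c)$ is a point of $C$ on the diagonal. Pre-composing with $z\mapsto z+c$ replaces $f,g$ by $f(z+c),g(z+c)$, which now share a common constant term $v:=f(c)=g(c)$; post-composing with $z\mapsto z-v$ then arranges $f(0)=g(0)=0$. (A single post-composition translation can zero out both constant terms only if they already agree, which is precisely why one first has to move a point with $f(c)=g(c)$ to the origin — and such a point exists exactly because $f-g$ is non-constant.)

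\emph{Step 2 (normalizing the leading coefficients).} Let $a_d,b_e\ne 0$ be the current leading coefficients. Pre-composing with $z\mapsto\mu z$ and post-composing with $z\mapsto\nu z$ preserves the vanishing of both constant terms and multiplies the leading coefficients by $\nu\mu^d$ and $\nu\mu^e$ respectively, so it suffices to solve $\nu\mu^d a_d=1/d$ and $\nu\mu^e b_e=1/e$. Eliminating $\nu$ gives $\mu^{d-e}=eb_e/(da_d)$, which has a nonzero solution in $k$ because $d-e\ge 1$ and $k$ is algebraically closed (and $1,\dots,d$ are invertible, which holds in characteristic $0$ or $p>d$); then $\nu=1/(d\mu^d a_d)$.

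\emph{Step 3 (reading off the form).} After Steps 1--2 we have $f(x)=\tfrac1d x^d+a_{d-1}x^{d-1}+\cdots+a_1x$ with zero constant term, so $f'$ is monic of degree $d-1$ and, over the algebraically closed field $k$, factors as $\prod_{j=1}^{d-1}(x-s_j)$; since $f(0)=0$ and $1,\dots,d$ are invertible, $f$ equals its antiderivative $\int_0^x\prod_{j}(t-s_j)\,dt$, whose expansion in elementary symmetric functions of the $s_j$ is exactly the right-hand side of \eqref{eq:normalform}. The identical argument applied to $g$, with $g'=\prod_{j=1}^{e-1}(y-t_j)$, produces the left-hand side, and composing the affine maps of Steps 1 and 2 gives the required $\phi,\psi$. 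The only substantive point in the whole argument is the use of $\deg(g)<\deg(f)$ in Step 1 to locate a value simultaneously attained by $f$ and $g$; the remaining normalizations are routine bookkeeping with coefficients.
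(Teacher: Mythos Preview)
Your proof is correct and follows essentially the same route as the paper's: first translate to kill the constant terms using a common value of $f$ and $g$, then rescale by $(\alpha,\beta)$ (your $(\nu,\mu)$) solving $\beta^{d-e}=eb_e/(da_d)$ and $\alpha=1/(eb_e\beta^e)$ to fix the leading coefficients, and finally factor the derivatives. You are slightly more explicit than the paper in two places --- you justify the existence of the diagonal point $(c,c)\in C$ via $\deg(f-g)=d\ge1$, and you flag the need for $1,\dots,d$ to be invertible --- but the argument is the same.
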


\begin{proof}
Since the affine group is a group, we may construct our transformation in steps. First, note that for any point $g(a)=f(a)=A$ we may replace $(g(x), f(x))$ with $(g(x+a)-A, f(x+a)-A)$ thereby eliminating the constant terms. Now, assume that $(g, f)$ have no constant terms, and write
\[g(y)=b_ey^e+\cdots +b_1y, \qquad f(x)=a_dx^d+\cdots a_1x\]
with $b_ea_d\neq 0$. We then have
\[\alpha g(\beta y)=\alpha \beta^eb_ey^e+\cdots +\alpha \beta b_1y, \qquad \alpha f(\beta x)=\alpha\beta^da_dx^d+\cdots \alpha\beta a_1x,\]
and we choose
\[\beta^{d-e}=\frac{eb_e}{da_d}\quad\text{ and }\quad\alpha=\frac{1}{eb_e\beta^e}.\]
Replacing $(g, f)$ by $(\alpha g(\beta y), \alpha f(\beta x))$, we now have a correspondence of the form
\[\frac{1}{e}y^e+\frac{1}{e-1}b_{e-1}y^{e-1}+\cdots +b_1y = \frac{1}{d}x^d+\frac{1}{d-1}a_{d-1}x^{d-1}+\cdots +a_1x.\]
The $t_j$ and $s_i$ are obtained by factoring the derivatives of $g(y)$ and $f(x)$, respectively.
\end{proof}

The following algebraic lemma is crucial in our lower bound on the critical height.

\begin{lemma}\label{lem:nullstellensatz}
Let \[R=\ZZ\left[\frac{1}{p}:2\leq p\leq d\right],\] let $S=R[s_1, ..., s_{d-1}]$, let $f_{\mathbf{s}}(x)\in S[x]$ be defined by
\begin{equation}\label{eq:fform}
f_{\mathbf{s}}(x)=\frac{1}{d}x^d-\frac{1}{d-1}(s_1+\cdots +s_{d-1})x^{d-1}+\cdots\pm s_1\cdots s_{d-1}x,
\end{equation}
and let $F_i(s_1, ..., s_{d-1})=f_{\mathbf{s}}(s_i)\in S$. Then there exist $D\geq 1$ and $A_{ij}\in S$ such that
\begin{equation}\label{eq:homo}s_i^D=\sum_{j=1}^{d-1}A_{ij}F_j\end{equation}
for each $i$. The $A_{ij}$ may be taken to be homogeneous, of degree $D-d$.
\end{lemma}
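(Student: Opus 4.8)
The plan is to deduce the lemma from Hilbert's Nullstellensatz, combined with a fibrewise analysis over $\Spec R$ that keeps the denominators that appear under control.

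First I would record the homogeneity that will ultimately pin down the degrees of the $A_{ij}$. Give $s_1,\dots,s_{d-1}$ and $x$ all degree $1$. Then, since the term of $f_{\mathbf{s}}(x)$ of degree $d-k$ in $x$ is $\pm\frac{1}{d-k}e_k(\mathbf{s})\,x^{d-k}$ with $e_k$ the $k$-th elementary symmetric function, $f_{\mathbf{s}}(x)$ is homogeneous of degree $d$; equivalently $f_{\lambda\mathbf{s}}(\lambda x)=\lambda^d f_{\mathbf{s}}(x)$. Hence each $F_j=f_{\mathbf{s}}(s_j)$ is homogeneous of degree $d$ in $s_1,\dots,s_{d-1}$, which is exactly the weight needed to conclude at the end that the $A_{ij}$ can be chosen homogeneous of degree $D-d$ (extract the degree-$D$ component of any relation $s_i^D=\sum_j A_{ij}F_j$).

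The geometric core is the claim that over any algebraically closed field $k$ of characteristic $0$ or $p>d$, the forms $F_1,\dots,F_{d-1}$ have no common zero in $\AA^{d-1}_k$ other than the origin. To prove this I would use that $f_{\mathbf{s}}'(x)=\prod_{i=1}^{d-1}(x-s_i)$, so that the $s_i$ are precisely the critical points of $f_{\mathbf{s}}$ (with multiplicity), and that $F_j=f_{\mathbf{s}}(s_j)=0$ for all $j$ therefore says that every critical point of $f_{\mathbf{s}}$ is a root of $f_{\mathbf{s}}$; note also $f_{\mathbf{s}}(0)=0$ and $\deg f_{\mathbf{s}}=d$ since $1/d$ is a unit. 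Writing $f_{\mathbf{s}}$ via its distinct roots, with multiplicities $m_1,\dots,m_t$ summing to $d$, and using that each $m_a\le d$ is invertible in $k$, a root of multiplicity $m_a$ contributes multiplicity exactly $m_a-1$ to $f_{\mathbf{s}}'$; as every root of $f_{\mathbf{s}}'$ lies among these distinct roots, this gives $d-1=\sum_a(m_a-1)=d-t$, forcing $t=1$. Thus $f_{\mathbf{s}}=\frac{1}{d}x^d$ and all $s_i=0$.

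With the claim in hand I would finish as follows. Put $T=S/(F_1,\dots,F_{d-1})$, a Noetherian graded $R$-algebra generated in degree $1$ by the images of the $s_i$, so $\operatorname{Proj} T$ is a closed subscheme of $\PP^{d-2}_R$. The fibre of $\operatorname{Proj} T$ over a point of $\Spec R$ is the subscheme of $\PP^{d-2}$ over the residue field cut out by $F_1,\dots,F_{d-1}$, and it is empty by the geometric claim, since every residue field of $R=\ZZ[\frac{1}{p}:2\le p\le d]$ has characteristic $0$ or $p>d$. Having all fibres empty, $\operatorname{Proj} T$ is empty, which forces $T_+$ to be nilpotent, hence (by Noetherianity and generation in degree $1$) $T_n=0$ for all $n\ge D$ for some $D=D(d)$; thus $s_i^D\in(F_1,\dots,F_{d-1})S$ for every $i$, and taking the degree-$D$ homogeneous component of such an expression produces the $A_{ij}$, homogeneous of degree $D-d$. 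I expect the main obstacle to be precisely this last, arithmetic, step: a direct application of the Nullstellensatz over $\QQ$ or $\overline{\QQ}$ yields only a relation with uncontrolled integer denominators, and the role of $\operatorname{Proj} T$ over $\Spec R$ is to handle all residue characteristics $p>d$ at once and thereby ensure that no prime exceeding $d$ must be inverted. (Alternatively one could show that the Macaulay resultant $\operatorname{Res}(F_1,\dots,F_{d-1})\in R$ is a unit — again from the fibrewise claim — and invoke the classical fact that a fixed power of $s_i$ times this resultant lies in $(F_1,\dots,F_{d-1})$.)
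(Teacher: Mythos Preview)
Your proposal is correct and follows essentially the same line as the paper's proof: the same homogeneity bookkeeping, the same multiplicity count showing that $F_1=\cdots=F_{d-1}=0$ forces $f_{\mathbf{s}}=\tfrac{1}{d}x^d$ over any field of characteristic $0$ or $p>d$, and the same conclusion that the $s_i$ lie in the radical of $(F_1,\dots,F_{d-1})$. The only cosmetic difference is in the arithmetic wrap-up: the paper invokes the Jacobson property of $R$ to reduce a hypothetical non-irrelevant maximal ideal to a point over some $\FF_q$ with $q>d$, whereas you phrase the same reduction as ``$\operatorname{Proj} T\to\Spec R$ has empty fibres, hence is empty''; these are equivalent packagings of the same Nullstellensatz step.
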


\begin{proof}
This is essentially Lemma~7 of \cite{pcfpoly}, in slightly more algebraic terms, but we give a quick proof here for the convenience of the reader. We wish to show that the radical of the homogeneous ideal $I$ generated in $S$ by the $F_j$ is the irrelevant ideal (the ideal generated by all homogeneous elements of positive gradation), when $S$ is viewed as a graded $R$-ring in the obvious way. Suppose that this is not the case. Then the ideal generated by the $F_j$ is contained in some maximal non-irrelevant ideal $\mathfrak{m}$. Since $R$ is Jacobson, the Nullstellensatz gives us that $S/\mathfrak{m}$ is a finite extension of the field $R/(\mathfrak{m}\cap R)$, which must be a finite field $\FF_q$ of characteristic $p> d$.% That is, there exists an $\FF_q$-point on the scheme $\operatorname{Proj}(S/I)$

In other words, there exist values $s_1, ..., s_{d-1}\in \FF_q$, not all $0$, such that $f_{\mathbf{s}}(s_i)=0$ for all $i$, where $f_{\mathbf{s}}$ is defined as in \eqref{eq:fform}. Now, if $f_{\mathbf{s}}(x)$ vanishes at $x=s_i$ then (given the characteristic of the field) it vanishes there to order
\[1+\#\{j:s_j=s_i\},\]
since this is one more than the multiplicity of $s_i$ as a root of $f_\mathbf{s}'(x)$.
Hence $f_\mathbf{s}(x)$ has at least
\[d-1+\#\{s_i:1\leq i\leq d-1\}\]
roots, counted with multiplicity, as well as a root at $x=0$ which may or may not be counted amongst the $s_i$. Since $f_\mathbf{s}$ has degree $d$, this is possible only if $s_i=0$ for all $i$, a contradiction.

We have shown that the radical of $I\subseteq S$ contains all of the $s_i$, and hence $I$ contains some power of each (without loss of generality, the same power). The fact that the $A_{ij}$ can be taken to be homogeneous of degree $D-d$ follows from noting that the homogeneous terms of degree $D$ on both sides of \eqref{eq:homo} must agree.
\end{proof}

%%%%%%%%%%%%%%%%%%%%%%%%%%%%%%%%%%%%%%%%%%%%%%%%%%%%%%%%%%%%%%%%%%%%%%
%%%%%%%%%%%%%%%%%%%%%%%%%%%%%%%%%%%%%%%%%%%%%%%%%%%%%%%%%%%%%%%%%%%%%%
%%%%%%%%%%%%%%%%%%%%%%%%%%%%%%%%%%%%%%%%%%%%%%%%%%%%%%%%%%%%%%%%%%%%%%
%%%%%%%%%%%%%%%%%%%%%%%%%%%%%%%%%%%%%%%%%%%%%%%%%%%%%%%%%%%%%%%%%%%%%%
%%%%%%%%%%%%%%%%%%%%%%%%%%%%%%%%%%%%%%%%%%%%%%%%%%%%%%%%%%%%%%%%%%%%%%
%%%%%%%%%%%%%%%%%%%%%%%%%%%%%%%%%%%%%%%%%%%%%%%%%%%%%%%%%%%%%%%%%%%%%%
%%%%%%%%%%%%%%%%%%%%%%%%%%%%%%%%%%%%%%%%%%%%%%%%%%%%%%%%%%%%%%%%%%%%%%
%%%%%%%%%%%%%%%%%%%%%%%%%%%%%%%%%%%%%%%%%%%%%%%%%%%%%%%%%%%%%%%%%%%%%%

\section{Local heights}\label{sec:greens}

In this section, we work over an algebraically closed field $k$ equipped with a valuation $v$. We will say that $v$ is \emph{$p$-adic}, for a given prime integer $p$, if and only if $0<|p|_v<1$. Note that every valuation is $p$-adic for at most one $p$, but non-archimedean valuations need not be $p$-adic for any $p$ (e.g., valuations on function fields).

Since careful attention to the error terms will be necessary in Section~\ref{sec:global}, we say a word here on terminology. As usual, if $F$ and $G$ are real-valued functions on a set $S$, we write
\begin{equation}\label{eq:leqoh}F\leq G+O(1)\end{equation} to mean that there exists a constant $c$ such that $F(x)\leq G(x)+ c$ for all $x\in S$, and 
\begin{equation}\label{eq:oh}F=G+O(1)\end{equation} to mean that $G\leq F+O(1)$ as well. In general, there may be many distinct absolute values on $k$, and so our estimates will depend on the specific nature of $v$. We will say of \eqref{eq:leqoh} that \emph{the implied constant comes from archimedean places} if and only if we may take $c=0$ when $v$ is non-archimedean, and $c$ some other absolute quantity for any archimedean absolute value (independent of the specific absolute value chosen). More generally, we will say that \emph{the implied constant comes from primes up to $d$} if and only if we may take $c$ to be absolute for archimedean and $p$-adic absolute values with $p\leq d$, and $c=0$ otherwise. When these terms are used in reference to statements of the form \eqref{eq:oh}, we simply mean the same thing for each of the implied inequalities.

In general, bounds for all of the implied constants in this section can be made explicit by an elementary (if somewhat onerous) computation, except those depending on Lemma~\ref{lem:whack}. Those constants can also be made explicit, but require some effective version of Hilbert's Nullstellensatz~\cite{mw}.

We first cite some estimates which follow from the theory of Newton polygons, when $v$ is non-archimedean, and Rouch\'e's Theorem, when $v$ is archimedean.
\begin{lemma}\label{lem:nps}
Let
\[f(x)=a_dx^d+\cdots +a_1x+a_0\in k[x],\]
with $a_d\neq 0$.
\begin{enumerate}
\item\label{it:basicnps} If $f(w)=0$, then
\[\log|w|_v\leq \log\max_{0\leq i<d}\left\{\left|\frac{a_i}{a_d}\right|_v^{1/(d-i)}\right\}+\begin{cases}\log d & \text{if } v\text{ is archimedean}\\0 & \text{otherwise.}\end{cases}\]
\item\label{it:vsmallroots} 
 For any $\kappa\geq 0$, if 
\[\log\max_{0\leq i<d}\left\{\left|\frac{a_i}{a_d}\right|_v^{1/(d-i)}\right\}\leq \log\left|\frac{a_0}{a_d}\right|_v^{1/d}+\kappa\]
then every root of $f(w)=0$ satisfies
\[\log|w|\geq \log\left|\frac{a_0}{a_d}\right|^{1/d}-(d-1)\kappa -\begin{cases} (d-1)\log d & \text{if } v\text{ is archimedean}\\0 & \text{otherwise.}\end{cases}\]
\end{enumerate}
\end{lemma}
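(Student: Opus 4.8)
The plan is to derive both parts from the factorization $f(x)=a_d\prod_{j=1}^{d}(x-w_j)$ over the algebraically closed field $k$, together with a single ``dominant term'' estimate which reads as the ultrametric inequality when $v$ is non-archimedean and as the triangle inequality (equivalently, Rouch\'e's theorem) when $v$ is archimedean.

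For part~\eqref{it:basicnps} I would start from $a_dw^d=-\sum_{i=0}^{d-1}a_iw^i$ for a nonzero root $w$ (the bound being trivial if $w=0$). In the non-archimedean case this gives $|a_d|_v|w|_v^d\le\max_{0\le i<d}|a_i|_v|w|_v^i$, so some index $i<d$ satisfies $|w|_v^{d-i}\le|a_i/a_d|_v$, and extracting a $(d-i)$-th root yields the claim with constant $0$. In the archimedean case the sum has $d$ terms, so one gets instead $|a_d|_v|w|_v^d\le d\max_{0\le i<d}|a_i|_v|w|_v^i$; the same extraction gives $|w|_v\le d^{1/(d-i)}|a_i/a_d|_v^{1/(d-i)}\le d\cdot\max_{0\le i<d}|a_i/a_d|_v^{1/(d-i)}$, which is the asserted $\log d$ term. (Alternatively, on the circle of radius $d\max_i|a_i/a_d|_v^{1/(d-i)}$ the monomial $a_dx^d$ dominates the rest, so Rouch\'e places all $d$ roots strictly inside.)

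For part~\eqref{it:vsmallroots} I may assume $a_0\ne 0$, the asserted lower bound being $-\infty$ otherwise. The factorization gives $\prod_{j=1}^{d}|w_j|_v=|a_0/a_d|_v$. Fixing a root $w=w_1$ and bounding each of the other $d-1$ roots by part~\eqref{it:basicnps}, namely $|w_k|_v\le c_vM_v$ with $M_v=\max_{0\le i<d}|a_i/a_d|_v^{1/(d-i)}$ and $c_v=d$ (archimedean) or $1$ (otherwise), I get
\[|w|_v=\frac{|a_0/a_d|_v}{\prod_{k=2}^{d}|w_k|_v}\ \ge\ \frac{|a_0/a_d|_v}{(c_vM_v)^{d-1}},\]
hence $\log|w|_v\ge\log|a_0/a_d|_v-(d-1)\log c_v-(d-1)\log M_v$. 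Substituting the hypothesis $\log M_v\le\frac{1}{d}\log|a_0/a_d|_v+\kappa$ into the last term and using $\log|a_0/a_d|_v-\frac{d-1}{d}\log|a_0/a_d|_v=\log|a_0/a_d|_v^{1/d}$ gives exactly the stated inequality.

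I do not expect a genuine obstacle: the lemma is a repackaging of classical root estimates, and the content of both parts is the elementary argument above. The points needing care are purely a matter of bookkeeping the archimedean constants --- arranging that the number-of-terms factor contributes precisely $\log d$ in part~\eqref{it:basicnps} and $(d-1)\log d$ in part~\eqref{it:vsmallroots}, rather than the sharper but less tidy constants coming from, say, Fujiwara's bound --- together with the degenerate cases $d=1$ (both estimates become equalities) and $a_0=0$. Because later sections need to know which implied constants ``come from archimedean places,'' it is worth stressing in the write-up that the constants $0$ in the non-archimedean lines are exact, not merely asymptotic.
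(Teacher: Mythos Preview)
Your argument is correct. Part~\eqref{it:basicnps} matches the paper's approach: the dominant-term estimate you give is exactly the content of the paper's Rouch\'e argument in the archimedean case, and your ultrametric computation is what the paper's appeal to Newton polygons unpacks to in the non-archimedean case.

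For part~\eqref{it:vsmallroots}, your route is genuinely different and, in fact, cleaner. The paper applies Rouch\'e to the reversed polynomial $F(z)=z^df(1/z)$ to obtain $|w|_v\geq 1/R$ with $R=\max_{1\leq i\leq d}d^{1/i}|a_i/a_0|_v^{1/i}$, and then runs a case analysis on which index attains that maximum in order to extract the stated constants $(d-1)\kappa$ and $(d-1)\log d$. Your argument bypasses all of this: the identity $\prod_j|w_j|_v=|a_0/a_d|_v$ together with the upper bound from part~\eqref{it:basicnps} on the other $d-1$ roots yields the lower bound in one line, and the constants fall out automatically with no case split. What the paper's approach buys is independence of part~\eqref{it:vsmallroots} from part~\eqref{it:basicnps}, and perhaps a slightly sharper intermediate bound before the hypothesis on $\kappa$ is invoked; what your approach buys is brevity and a uniform treatment of the archimedean and non-archimedean cases. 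Either is perfectly adequate for the applications downstream, where only the shape of the constants (``coming from primes up to $d$'') matters.
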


\begin{proof}
Without loss of generality we will take $a_d=1$. Claims~\eqref{it:basicnps} and~\eqref{it:vsmallroots} both follow directly from the theory of Newton Polygons when $v$ is non-archimedean, so we will assume that $v$ is archimedean and, without loss of generality, that it is the usual absolute value on $\CC$. In this case, Claims~\eqref{it:basicnps} and~\eqref{it:vsmallroots} follow from Rouch\'{e}'s Theorem.

Specifically, let $R=\max_{0\leq i<d}|da_i|^{1/(d-i)}$, so that for $|z|=R$ we have $|a_iz^i|\leq \frac{1}{d}|z|^d$. It follows that $|f(z)-z^d|\leq |z^d|$ on the boundary of the disk of radius $R$ at the origin, and so $f(z)$ has all $d$ of its roots in this disk. That establishes that
\[\log|w|\leq\log R\leq \log\max_{0\leq i<d}\left\{\left|\frac{a_i}{a_d}\right|_v^{1/(d-i)}\right\}+\log d\]
for every root $f(w)=0$.

For \eqref{it:vsmallroots}, note that the function $F(z)=z^df(1/z)$ is holomorphic on $\CC$. Let $R=\max_{1\leq i\leq d}d^{1/i}|a_i/a_0|^{1/i}$. For $|z|=R$ we have $|a_iz^i|\leq \frac{1}{d}|a_0z^d|$ for each $1\leq i\leq d$, and so by Rouch\'{e}'s Theorem, $F(z)$ and $a_0z^d$ have the same number of roots in the disk of radius $R$ at the origin, specifically $d$. It follows that every root $f(w)=0$ satisfies $|1/w|< R$, whereupon
\[\log|w|\geq -\log R.\]
Suppose that we have 
\[\log|w|< \log|a_0|^{1/d}-\beta.\]
If the maximum defining $R$ occurs at $i=d$, then our inequalities become
\[-\frac{1}{d}\log d+\frac{1}{d}\log|a_0|=-\log R \leq \log |w|< \frac{1}{d}\log|a_0|-\beta,\]
which is plainly impossible once $\beta\geq \frac{1}{d}\log d$.

If the maximum defining $R$ is attained for some $1\leq i<d$, then we have
\[\frac{1}{i}\log\left|\frac{a_0}{a_i}\right|-\frac{1}{i}\log d< \frac{1}{d}\log|a_0|-\beta,\]
whence
\begin{eqnarray*}
\frac{1}{d}\log|a_0|+\frac{1}{d-1}\beta &\leq& \frac{1}{d}\log\left|a_0\right|+\frac{i}{d-i}\beta\\
&<& \frac{1}{d-i}\log d+\frac{1}{d-i}\log|a_i|\\
&\leq & \log d+\frac{1}{d-i}\log |a_i|\\
&\leq & \log d+\frac{1}{d}\log |a_0|+\kappa,
\end{eqnarray*}
which is a contradiction as soon as $\beta\geq (d-1)\log d+(d-1)\kappa$.
\end{proof}

For the remainder of this section, we consider a correspondence $C$ given by the equation
\begin{equation}\label{eq:generic}\sum_{j=0}^eb_jy^j=g(y)=f(x)=\sum_{i=0}^da_ix^i,\end{equation}
with $a_d\neq 0$ and $b_e\neq 0$.  Note that this form is more permissive than the form \eqref{eq:normalform} and, while we do not require the extra flexibility, it seems that the more general estimates will be useful in subsequent work. We will denote by $K$ the subfield of $k$ generated (over the prime field) by the coefficients of $C$, and by the \emph{place of $K$ under $v$} we mean the valuation of $K$ to which $v$ restricts. 

For such a correspondence, we define
\begin{multline*}
\basin(C, v)=\log^+\max\left\{\left|\frac{a_i}{a_d}\right|_v^{1/(d-i)}, \left|\frac{b_j}{b_e}\right|_v^{e/d(e-j)}, \left|\frac{b_e}{a_d}\right|_v^{e/(d-e)}:0\leq i < d, 0\leq j<e\right\}\\
+\begin{cases}\log (2d) & \text{if }v\text{ is nonarchimedean} \\ 0 & \text{otherwise}.\end{cases}
\end{multline*}

We now define an analogue of the well-known escape-rate functions in polyomial dynamics. Note that, in the present context, it does not make sense for this function to be defined on points of $\AA^1(k)$, but rather on paths in the directed graph associated to the correspondence.

\begin{definition}\label{def:greens}
For $C$ a correspondence as in \eqref{eq:generic}, and $P\in \path(k)$, let
\[G_{C}(P, v)=\lim_{n\to\infty}\left(\frac{e}{d}\right)^n\log^+|\pi\circ\sigma^n(P)|_v,\]
should this limit exists.
\end{definition}

The following lemma follows a well-known telescoping sum argument originally due to Tate, but since we are interested in fairly explicit bounds, we give an explicit proof. 

\begin{lemma}\label{lem:greensprops}
The limit defining $G_{C}(P, v)$ always exists. Furthermore,
\begin{enumerate}
\item\label{it:transform} For any path $P$,
\[G_{C}(\sigma(P), v)=\left(\frac{d}{e}\right)G_{C}(P, v).\]
\item\label{it:generalbound} For any path $P$,
\[G_{C}(P, v)=\log^+|\pi(P)|_v+O(1),\]
where the implied constant depends on $C$ and the place of $K$ above $v$.
\item\label{it:uniformpart} If $\log|\pi(P)|_v>\basin(C, v)$, then
\[G_{C}(P, v)=\log^+|\pi(P)|_v+\left(\frac{1}{d-e}\right)\log\left|\frac{a_d}{b_e}\right|_v+O(1),\]
where the implied constant comes from primes up to $d$.
\item\label{it:roughlysizebasin} For any $B\geq 1$ and $P$ with $\log|\pi(P)|\leq \basin(C, v)+B$,
we have
\[G_{C}(P, v)\leq \basin(C, v)+O(1),\]
where the implied constant depends on $e$, $d$, $B$, and the place of $K$ above $v$.
\end{enumerate}
\end{lemma}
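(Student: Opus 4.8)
The plan is to set up the telescoping argument in the standard way, but keeping careful track of where each error term comes from. The key structural observation is this: if $P$ is a path and $a = \pi(P)$, $b = \pi(\sigma(P))$, then $(a,b) \in C$, so $g(b) = f(a)$, i.e. $b_e b^e + \cdots = a_d a^d + \cdots$. From this I extract the comparison $\left|\frac{a_d}{b_e}\right|_v |a|_v^d = |b|_v^e \cdot (1 + \text{lower order})$ whenever $|a|_v$ is large, which is exactly the input needed to make the sequence $\left(\frac{e}{d}\right)^n \log^+|\pi\circ\sigma^n(P)|_v$ Cauchy. More precisely, I would first prove a one-step estimate: there is a constant (of the appropriate type) such that
\[
\left| e\log^+|b|_v - d\log^+|a|_v - \log\left|\tfrac{a_d}{b_e}\right|_v \right| \le C
\]
whenever $\log|a|_v > \basin(C,v)$, and a cruder two-sided bound $\left| e\log^+|b|_v - d\log^+|a|_v \right| \le C'$ valid for all $P$, where $C'$ depends on $C$ and the place. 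The first of these is where $\basin(C,v)$ earns its definition: the exponents $1/(d-i)$, $e/d(e-j)$, $e/(d-e)$ appearing in $\basin$ are precisely calibrated so that above that threshold the leading terms on both sides of $g(b)=f(a)$ dominate, and Lemma~\ref{lem:nps}\eqref{it:basicnps} (applied to $g(y) - f(a)$ as a polynomial in $y$) controls $|b|_v$ from above while \eqref{it:vsmallroots} controls it from below. This is the step I expect to be the main obstacle: one has to check that the hypothesis of \eqref{it:vsmallroots} is actually satisfied under the assumption $\log|a|_v > \basin(C,v)$, i.e. that the constant term $f(a) = a_d a^d(1+\cdots)$ genuinely dominates, and then chase the resulting $\kappa$ through.

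Granting the one-step estimates, existence of the limit is the usual Tate telescoping: for $\log|\pi(P)|_v$ above the threshold, the ratios $r_n = \left(\frac{e}{d}\right)^n \log^+|\pi\circ\sigma^n(P)|_v$ satisfy $|r_{n+1} - r_n| \le \left(\frac{e}{d}\right)^{n+1} C$, which is summable since $e < d$; hence $(r_n)$ converges and moreover $|G_C(P,v) - \log^+|\pi(P)|_v - \frac{1}{d-e}\log|a_d/b_e|_v| \le \sum_{n\ge 0}(e/d)^{n+1}C = O(1)$ with the constant coming from primes up to $d$. That is part~\eqref{it:uniformpart}. For a general path $P$ (not necessarily above the threshold), I apply part~\eqref{it:transform} after the fact: I first need to know $G_C$ is defined for \emph{every} path, so I argue that some forward iterate $\sigma^N(P)$ lands above $\basin(C,v)$ \emph{or} the whole forward orbit stays bounded; in the bounded case $\log^+|\pi\circ\sigma^n(P)|_v$ is a bounded sequence, so $r_n \to 0$ trivially. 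In the unbounded case, once an iterate exceeds the threshold it stays (strictly increasing, by the one-step lower bound), so the limit exists for $\sigma^N(P)$ and one pulls it back using $G_C(\sigma(P),v) = \frac de G_C(P,v)$, which itself is immediate from the definition by reindexing.

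Part~\eqref{it:transform} is purely formal from Definition~\ref{def:greens}. Part~\eqref{it:generalbound} follows by combining: if the forward orbit of $P$ is bounded then both sides are $\log^+|\pi(P)|_v + O(1)$ with a $C$-and-place-dependent constant (the left side is $0$, the right side is bounded); if unbounded, iterate the crude two-sided one-step bound $N$ times (where $N$ is the number of steps to reach the threshold, which is bounded in terms of $C$ and the place since each step below threshold changes the log-absolute-value in a controlled way) and then invoke \eqref{it:uniformpart}. Part~\eqref{it:roughlysizebasin} is the coarsest: under $\log|\pi(P)| \le \basin(C,v) + B$, either the orbit is eventually bounded (giving $G_C(P,v) = O(1)$, absorbed) or it escapes, and before escaping past height $\basin(C,v)+B+O(1)$ one more step sends it above threshold, at which point \eqref{it:uniformpart} gives $G_C = \log^+|\pi| + O(1) \le \basin(C,v) + B + O(1)$, and since $B$ is allowed into the implied constant this reads $G_C(P,v) \le \basin(C,v) + O(1)$. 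I would organize the write-up so that \eqref{it:uniformpart} is proved first (it is the engine), then \eqref{it:transform}, then existence of the limit in general, then \eqref{it:generalbound} and \eqref{it:roughlysizebasin} as corollaries.
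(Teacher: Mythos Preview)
Your plan is essentially the paper's own approach: prove~\eqref{it:uniformpart} first via the one-step estimate and a Tate-style telescoping sum (splitting into the non-archimedean case, where Newton polygons give exact equality $|P_1|_v = |a_d P_0^d/b_e|_v^{1/e}$ once above $\basin(C,v)$, and the archimedean case handled by Rouch\'e via Lemma~\ref{lem:nps}), then read off~\eqref{it:transform} formally, deduce existence of the limit in general by the dichotomy ``eventually above threshold'' versus ``forever below threshold (hence $G_C=0$)'', and finally extract~\eqref{it:generalbound} and~\eqref{it:roughlysizebasin} from the crude step inequality. One caution: your parenthetical claim in the argument for~\eqref{it:generalbound}, that the number $N$ of steps to reach the threshold is bounded in terms of $C$ and the place, is false (a path can linger below $\basin(C,v)$ for arbitrarily many steps before escaping), but fortunately you do not need it---the telescoping error $\sum_{k<N}(e/d)^{k+1}C'$ is bounded by $\tfrac{e}{d-e}C'$ regardless of $N$, so just drop that clause. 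Similarly, in your sketch for~\eqref{it:roughlysizebasin}, the orbit need not pass \emph{through} the interval $(\basin(C,v),\basin(C,v)+B]$ on its way out; it can jump from below $\basin(C,v)$ directly to as high as $\tfrac{d}{e}\basin(C,v)+O(1)$ in one step, but since you then multiply by $(e/d)^n$ with $n\geq 1$ the bound $G_C(P,v)\leq \basin(C,v)+O(1)$ still follows.
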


\begin{proof}
Property \eqref{it:transform} is immediate from the definition, as long as the limit exists.

We first prove \eqref{it:uniformpart} in the case where $v$ is non-archimedean, and for convenience write $P_n=\pi\circ\sigma^n(P)$. Note that if $\log|P_0|_v>\basin(C, v)$, we must have $|f(P_0)|_v=|a_dP_0^d|_v$, since this term in the polynomial will be strictly larger than any other. On the other hand, by Lemma~\ref{lem:nps} applied to the constant term in $g(y)-f(P_0)$, we have
\[|P_1|_v=\left|\frac{a_d}{b_e}P_0^d\right|_v^{1/e}.\] Our hypotheses also ensure that this is at least as large as $|P_0|_v$, and hence we may apply the same argument to $P_n$ for all $n$, obtaining
\[|P_n|_v=|P_0|_v^{(d/e)^n}\left|\frac{a_d}{b_e}\right|_v^{1/e+d/e^2+\cdots +d^{n-1}/e^{n}}.\]
The claim now follows by taking logarithms and limits.

Now suppose that $v$ is archimedean. Then the hypothesis ensures that $|a_dP_0^d|_v>\frac{1}{2d}|a_iP_0^i|_v$, for any $i$, and hence
\[\frac{1}{2}|a_dP_0^d|_v\leq |f(P_0)|\leq \frac{3}{2}|a_dP_0^d|_v.\]
Now, applying Lemma~\ref{lem:nps} to $g(y)-f(P_0)$, we see that all roots $y$ satisfy
\[\left|\log|y|_v-\frac{d}{e}\log |P_0|_v\right|\leq c,\]
for some absolute constant $c$.
This ensures that $\log|P_1|>\basin(C, v)+c_4$, and so we may iterate this, obtaining
\[\left| \left(\frac{e}{d}\right)^{n+m}\log|P_{n+m}|-\left(\frac{e}{d}\right)^m\log|P_m|\right| \leq c\left(\frac{e}{d}\right)^m.\]
This shows that the sequence in question is Cauchy, and hence converges, and also establishes the claimed error bound.

Note that \eqref{it:uniformpart} now also implies that the limit defining $G_{C}(P, v)$ always exists, independent of the size of $\log|P_0|$. If $\log|P_n|>\basin(C, v)$ for some $n$, then
\[G_{C}(P, v)=\left(\frac{e}{d}\right)^nG_{C}(\sigma^n(P), v)=\left(\frac{e}{d}\right)^n\log^+|P_n|+O(1),\]
and if there is no such $n$ then $G_{C}(P, v)=0$.

Now, to prove \eqref{it:generalbound} and \eqref{it:roughlysizebasin}, in the case that $v$ is non-archimedean, note that for any $x$, we have by Lemma~\ref{lem:nps}
\begin{eqnarray}\label{eq:roughupper}
\log|a_dx^d+\cdots+a_0|&=&\log|a_d|+\sum_{f(w)=0}\log |x-w|\\
&\leq &\log|a_d|+\sum_{f(w)=0}\log\max\{|x|, |w|\}\\
&\leq&\log|a_d|+d\max\{\log^+|x|, \basin(C, v)\}+O(1).
\end{eqnarray}

Now, applying Lemma~\ref{lem:nps} to $g(y)-f(P_0)$ as a polynomial in $y$, we see that the largest root is of size either at most $\left|\frac{b_j}{b_e}\right|_v^{1/(e-j)}$, for some $j$, or else of size at most $|f(P_0)/b_e|^{1/e}$. In the former case we have \[\log|P_1|\leq \frac{d}{e}\basin(C, v)+O(1),\] and in the latter
\[\log|P_1|\leq \frac{1}{e}\log|f(P_0)/b_e|\leq \frac{d}{e}\max\{\log^+|P_0|, \basin(C, v)\}+\frac{1}{e}\log|a_d/b_e|+O(1).\] In either case, we have
\begin{equation}\label{eq:stepbound}\frac{e}{d}\log^+|P_1|\leq \max\{\log^+|P_0|, \basin(C, v)\}+\frac{1}{d}\log^+\left|\frac{a_d}{b_e}\right|+O(1).\end{equation}
 If $\log^+|P_0|\leq \basin(C, v)+B$, then the above becomes
\[\frac{e}{d}\log|P_1|\leq \basin(C, v)+O(1),\]
and proceeding by induction gives \eqref{it:roughlysizebasin}.

On the other hand, replacing the maximum in \eqref{eq:stepbound} by a sum gives the upper bound in \eqref{it:generalbound}. The lower bound here follows from \eqref{it:uniformpart} if $\log|P_0|$ large enough, and the non-negativity of $G_{C}$ if $\log|P_0|$ is not too large.
\end{proof}

\begin{remark}
Dinh~\cite{dinh} defines an invariant measure associated to a correspondence of the form~\eqref{eq:varsep} over $\CC$. Specifically, if $C$ is defined by $g(y)=f(x)$ and we write $C^*=x_*y^*$ (where $x$ and $y$ are the coordinate projections in \eqref{eq:varsep}), then Dinh constructs a compactly-supported probability measure $\mu$ on $\CC$ satisfying $C^*\mu=\deg(f)\mu$. Writing the potential with respect to this measure as
\[\widetilde{G}_C(z)=\int \log|z-w|d\mu(w)-I(\mu),\]
and extending this function linearly to divisors, we then have $\widetilde{G}_C(C_* z)=\deg(f)\widetilde{G}_C(z)$ and $\widetilde{G}_C(z)=\log|z|+O(1)$.

Now note, as in \cite{corr}, that the fibre $\pi^{-1}(z)\subseteq\path(\CC)$ admits a natural structure as a probability space. In particular, if we weight paths of length $n$ by their multiplicities, and take the limit of this distribution, we obtain a measure $\nu$ on $\path_z(\CC)$ with respect to which $G_C$ is measurable. If we define
\[\mathbb{E}G_C(z)=\int_{\pi(P)=z}G_C(P)d\nu(P),\]
(dropping reference to the valuation on $\CC$)
then it follows easily from Lemma~\ref{lem:greensprops} that $\mathbb{E}G_C(z)=\log|z|+O(1)$ and $\mathbb{E}G_C(C_* z)=\deg(f)\mathbb{E}G_C(z)$, where we again extend to divisors by linearity. It follows that
\[\left|\widetilde{G}_C(z)-\mathbb{E}G_C(z)\right|=\deg(f)^{-n}\left|\widetilde{G}_C(C_*^n z)-\mathbb{E}G_C(C_*^n z)\right|=O\left(\deg(f)^{-n}\right),\]
where the implied constant is independent of $n$. In particular, the potential defined by Dinh's invariant measure is precisely the average value of $G_C$ on the corresponding fibre of $\pi$.

It would be interesting to construct a measure on $\path(\CC)$ of which $G_C$ is the potential function in some reasonable sense, and show that periodic paths and preimages generally equidistribute to this measure.
\end{remark}

We now turn our attention back to correspondences specifically of the form~\eqref{eq:normalform}. Given tuples $\mathbf{s}$ and $\mathbf{t}$, we use the notation $f_{\mathbf{s}}$ and $g_{\mathbf{t}}$ as in~\eqref{eq:fform}, and write $C_{\mathbf{s}, \mathbf{t}}$ for the correspondence $g_{\mathbf{t}}(y)=f_{\mathbf{s}}(x )$. For brevity, we shall write
\[\|\mathbf{s}\|_v=\max\{|s_1|_v, ..., |s_{d-1}|_v\},\]
and similarly for $\mathbf{t}$.

\begin{lemma}\label{lem:basinboundislocalheight}
For and $\mathbf{s}, \mathbf{t}$ we have
\[\basin(C_{\mathbf{s}, \mathbf{t}}, v) = \log\max\{1, \|\mathbf{t}\|^{e/d}, \|\mathbf{s}\|\}+O(1),\]
where the implied constant comes from primes up to $d$.
\end{lemma}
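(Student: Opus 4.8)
The strategy is to compare the definition of $\basin(C_{\mathbf{s},\mathbf{t}},v)$ term by term with $\log\max\{1,\|\mathbf t\|^{e/d},\|\mathbf s\|\}$. Recall that for $C_{\mathbf{s},\mathbf{t}}$ the leading coefficients are $a_d=1/d$ and $b_e=1/e$, the coefficient $a_i$ of $x^i$ is $\pm\frac{1}{i}\sigma_{d-i}(\mathbf s)$ (an elementary symmetric function, up to the $1/i$), and similarly $b_j=\pm\frac1j\sigma_{e-j}(\mathbf t)$. So $\basin$ is the $\log^+$ of a maximum of three families of quantities: $\bigl|\tfrac{a_i}{a_d}\bigr|_v^{1/(d-i)}$, $\bigl|\tfrac{b_j}{b_e}\bigr|_v^{e/d(e-j)}$, and $\bigl|\tfrac{b_e}{a_d}\bigr|_v^{e/(d-e)}$, plus the $\log(2d)$ error at non-archimedean places. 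The third term equals $|e/d|_v^{e/(d-e)}$, which is $1$ at all places with $p>d$ and bounded by an absolute constant (depending on $d,e$) at places with $p\le d$ or $v$ archimedean, so it contributes only to the $O(1)$ with implied constant coming from primes up to $d$; I would dispose of it first.

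Next I would handle the $\mathbf s$-terms. Writing $a_i/a_d=\pm\frac{d}{i}\sigma_{d-i}(\mathbf s)$, the factor $d/i$ contributes an $O(1)$ from primes up to $d$. So I need $\log^+\max_i|\sigma_{d-i}(\mathbf s)|_v^{1/(d-i)}=\log\max\{1,\|\mathbf s\|\}+O(1)$. The upper bound is immediate at non-archimedean places (where $|\sigma_k(\mathbf s)|_v\le\|\mathbf s\|_v^k$ by the ultrametric inequality, giving $|\sigma_k|_v^{1/k}\le\|\mathbf s\|_v$) and holds up to the binomial coefficient $\binom{d-1}{k}$ at archimedean places, again an absolute constant from primes up to $d$ (really just an absolute constant since $\binom{d-1}{k}\le 2^{d-1}$). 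For the lower bound: if $\|\mathbf s\|_v=|s_\ell|_v$ is achieved, consider $\sigma_1(\mathbf s)=\sum s_i$; at a non-archimedean place $|\sigma_1(\mathbf s)|_v$ need not equal $\|\mathbf s\|_v$ (cancellation), so instead I would use the standard fact that $\max_k|\sigma_k(\mathbf s)|_v^{1/k}\ge c_{d}\|\mathbf s\|_v$ — equivalently, the polynomial $\prod(T-s_i)$ has a coefficient comparable in size to $\|\mathbf s\|^{\deg}$; this is exactly Lemma~\ref{lem:nps}\eqref{it:basicnps} applied to $\prod_{i}(T - s_i)$ with $T$ a root of largest absolute value, read backwards, or can be seen directly by evaluating the monic polynomial $\prod(T-s_i)$ at $T=2\|\mathbf s\|_v$ (archimedean) resp. noting the Newton polygon forces some slope $\ge\log\|\mathbf s\|_v$. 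Either way one gets $\max_k|\sigma_k(\mathbf s)|_v^{1/k}\ge\|\mathbf s\|_v$ up to a constant from primes $\le d$. An entirely parallel argument for $\mathbf t$ gives $\log^+\max_j|\sigma_{e-j}(\mathbf t)|_v^{e/d(e-j)}=\frac{e}{d}\log\max\{1,\|\mathbf t\|\}+O(1)=\log\max\{1,\|\mathbf t\|^{e/d}\}+O(1)$, with the extra $e/d$ in the exponent being exactly what produces the $\|\mathbf t\|^{e/d}$ on the right-hand side.

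Finally, combining: $\basin(C_{\mathbf s,\mathbf t},v)$ is $\log^+$ of a max of the three families, which by the above equals $\max\{1,\|\mathbf s\|,\|\mathbf t\|^{e/d},(\text{bounded factor})\}$ up to multiplicative constants from primes $\le d$, and taking logs gives the claim. The one point requiring care — and the main (minor) obstacle — is the lower bound at non-archimedean places, i.e.\ ruling out catastrophic cancellation in the elementary symmetric functions; the clean way is to invoke Lemma~\ref{lem:nps}\eqref{it:basicnps} for the monic polynomial with roots $s_1,\dots,s_{d-1}$ (and $0$), which says any root $w$ satisfies $\log|w|_v\le\log\max_{k}|\sigma_k(\mathbf s)|_v^{1/k}+(\text{arch.\ term})$; taking $w=s_\ell$ with $|s_\ell|_v=\|\mathbf s\|_v$ gives precisely $\log\|\mathbf s\|_v\le\log\max_k|\sigma_k(\mathbf s)|_v^{1/k}+O(1)$. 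Everything else is bookkeeping of the factors $\tfrac di,\tfrac ej,\tfrac ed$ and binomial coefficients, all of which involve only primes up to $d$ and hence land in the allowed error term.
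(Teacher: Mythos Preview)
Your proposal is correct and follows essentially the same route as the paper: both express $a_i/a_d$ (resp.\ $b_j/b_e$) as $\pm\tfrac{d}{i}\sigma_{d-i}(\mathbf s)$ (resp.\ $\pm\tfrac{e}{j}\sigma_{e-j}(\mathbf t)$), bound these above by $\|\mathbf s\|_v^{d-i}$ (resp.\ $\|\mathbf t\|_v^{e-j}$) via the triangle inequality, and obtain the reverse inequality from the fact that each $s_i$ is a root of $f_{\mathbf s}'(x)=\prod_j(x-s_j)$. The only cosmetic difference is that the paper writes out the identity $s_i^{d-1}=\sum_{j=1}^{d-1}(-1)^{j+1}s_i^{j-1}\tfrac{ja_j}{da_d}$ explicitly and estimates by hand, whereas you invoke Lemma~\ref{lem:nps}\eqref{it:basicnps} for the same polynomial; these are the same computation, and all stray factors ($d/i$, $e/j$, $d/e$, binomial coefficients, $\log(2d)$) are correctly absorbed into the $O(1)$ from primes up to $d$.
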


\begin{proof}
Note that the coefficients of
\begin{multline*}
f_{\mathbf{s}}(x)=a_dx^d+\cdots +a_1x\\=\frac{1}{d}x^d-\frac{1}{d-1}(s_1+\cdots+s_{d-1})x^{d-1}+\cdots \pm s_1\cdots s_{d-1}x
\end{multline*}
satisfy (for $0<i<d$)
\[a_i=\left(\frac{(-1)^{d-i}}{i}\sum_{\substack{J\subseteq \{1, ..., d-1\}\\|J|=d-i}}\prod_{j\in J}s_j\right)\]
 and hence
\begin{equation}\label{eq:symbound}\log\left|\frac{a_i}{a_d}\right|_v\leq (d-i)\log\|\mathbf{s}\|_v+\log\max_{1\leq i< d}\left|\frac{d}{i}\right|_v+\begin{cases}\log\binom{d-1}{d-i} & \text{if }v\text{ is archimedean}\\ 0 & \text{otherwise.}\end{cases}\end{equation}
Similarly, we may write
\[s_i^{d-1}=\sum_{j=1}^{d-1}(-1)^{j+1}s_i^{j-1}\frac{ja_{j}}{da_d},\]
from which
\[\log\|\mathbf{s}\|_v\leq \log\max_{1\leq j<d}\left|\frac{a_j}{a_d}\right|_v^{1/(d-j)}+\log\max_{1\leq i<d}\left|\frac{j}{d}\right|_v+\begin{cases}\log d & \text{if }v\text{ is archimedean}\\ 0 & \text{otherwise.}\end{cases}\]
Applying the same reasoning to $g_{\mathbf{t}}$ completes the proof of the lemma.
\end{proof}

\begin{lemma}\label{lem:whack}
Let $C$ be a correspondence of the form \eqref{eq:normalform}. Then there exists a critical point $\sigma\in \operatorname{Crit}(C)$ such that for any $(\sigma,  \tau)\in C$ we have
\begin{equation}\label{eq:teebound}\log^+|\tau|_v=\frac{d}{e}\basin(C, v)+O(1).\end{equation}
Furthermore, the implied constant comes from primes up to $d$.
\end{lemma}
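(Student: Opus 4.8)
The plan is to exploit Lemma~\ref{lem:nullstellensatz}, which says that modulo the ideal generated by the quantities $F_j(\mathbf{s}) = f_{\mathbf{s}}(s_j)$, each $s_i$ is nilpotent in a \emph{uniform} way: we have $s_i^D = \sum_j A_{ij} F_j$ with $A_{ij}$ homogeneous of degree $D-d$. Taking $v$-adic absolute values of this identity and using the ultrametric (or triangle) inequality and the homogeneity of the $A_{ij}$, one gets a bound of the shape
\[D\log\|\mathbf{s}\|_v \leq (D-d)\log\|\mathbf{s}\|_v + \max_j \log|F_j(\mathbf{s})|_v + O(1),\]
i.e.
\[d\log\|\mathbf{s}\|_v \leq \max_j \log^+|f_{\mathbf{s}}(s_j)|_v + O(1),\]
where the implied constant absorbs the coefficients of the $A_{ij}$ and comes from primes up to $d$ (the denominators in $R$) together with archimedean contributions. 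In particular there is at least one index $i$ with $\log^+|f_{\mathbf{s}}(s_i)|_v \geq d\log\|\mathbf{s}\|_v - O(1)$, and symmetrically for $g_{\mathbf{t}}$ there is an index with $\log^+|g_{\mathbf{t}}(t_i)|_v \geq e\log\|\mathbf{t}\|_v - O(1)$.

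Next I would translate this into a statement about a critical point of $C$. The critical points of the correspondence $C = C_{\mathbf{s},\mathbf{t}}$ are (by construction of the normal form in the Lemma preceding Lemma~\ref{lem:nullstellensatz}, since the $s_i$ and $t_j$ are precisely the roots of $f_{\mathbf{s}}'$ and $g_{\mathbf{t}}'$) exactly the $s_i$ together with, via the $y$-projection, those $a$ with $g_{\mathbf{t}}(b) = f_{\mathbf{s}}(a)$ for $b$ a root $t_j$ of $g_{\mathbf{t}}'$. Take $\sigma = s_i$ for the index $i$ produced above, so $\log^+|f_{\mathbf{s}}(\sigma)|_v \geq d\log\|\mathbf{s}\|_v - O(1)$. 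Now for any $\tau$ with $(\sigma,\tau)\in C$ we have $g_{\mathbf{t}}(\tau) = f_{\mathbf{s}}(\sigma)$. One then needs both an upper and a lower bound on $\log^+|\tau|_v$ in terms of $\log^+|f_{\mathbf{s}}(\sigma)|_v$:

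The \emph{lower} bound is Lemma~\ref{lem:nps}\eqref{it:vsmallroots} applied to the polynomial $g_{\mathbf{t}}(y) - f_{\mathbf{s}}(\sigma)$ in $y$: its constant term is $-f_{\mathbf{s}}(\sigma)$, its leading coefficient is $\tfrac1e$, and the hypothesis needed there — that the Newton polygon is dominated by the constant-term slope — will follow once $\log|f_{\mathbf{s}}(\sigma)|_v$ is large compared to $\log\|\mathbf{t}\|_v$; since $\log^+|f_{\mathbf{s}}(\sigma)|_v \geq d\log\|\mathbf{s}\|_v - O(1)$ and, by Lemma~\ref{lem:basinboundislocalheight}, $\basin(C,v) = \log\max\{1,\|\mathbf{t}\|^{e/d},\|\mathbf{s}\|\} + O(1)$, we get $\log^+|\tau|_v \geq \tfrac1e\log^+|f_{\mathbf{s}}(\sigma)|_v - O(1) \geq \tfrac{d}{e}\log\|\mathbf{s}\|_v - O(1)$. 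The \emph{upper} bound comes from Lemma~\ref{lem:nps}\eqref{it:basicnps}: every root $\tau$ of $g_{\mathbf{t}}(y) = f_{\mathbf{s}}(\sigma)$ satisfies $\log|\tau|_v \leq \max\{\log\|\mathbf{t}\|_v, \tfrac1e\log^+|f_{\mathbf{s}}(\sigma)|_v\} + O(1)$, and $\log^+|f_{\mathbf{s}}(\sigma)|_v$ is itself $\leq d\log^+\|\mathbf{s}\|_v + O(1)$ from the explicit coefficient bounds (as in \eqref{eq:symbound}). Combining, and invoking Lemma~\ref{lem:basinboundislocalheight} once more to identify $\tfrac{d}{e}\basin(C,v)$ with $\tfrac{d}{e}\log\max\{1,\|\mathbf{t}\|^{e/d},\|\mathbf{s}\|\} + O(1) = \max\{0,\log\|\mathbf{t}\|^{e/d},\tfrac{d}{e}\log\|\mathbf{s}\|\} + O(1)$, gives \eqref{eq:teebound}. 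One subtlety to check here: the maximum defining $\basin$ might be attained by the $\|\mathbf{t}\|^{e/d}$ term rather than the $\|\mathbf{s}\|$ term, in which case one wants a critical point coming from the $t$-side instead, and the symmetric argument on $g_{\mathbf{t}}$ (producing $\sigma$ as a preimage under $f_{\mathbf{s}}$ of a root $t_j$, with $\log^+|f_{\mathbf{s}}(\sigma)|_v = \log^+|g_{\mathbf{t}}(t_j)|_v \geq e\log\|\mathbf{t}\|_v - O(1)$) handles that case; a case split on which term dominates $\basin(C,v)$ finishes the proof.

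The main obstacle I anticipate is bookkeeping the error terms — in particular verifying that the constant in the Nullstellensatz identity of Lemma~\ref{lem:nullstellensatz} genuinely "comes from primes up to $d$" (it does, since $R = \ZZ[1/p : 2\leq p\leq d]$ and the $A_{ij}$ have coefficients in $R$, so their $v$-adic sizes are $1$ for all $v$ not archimedean or $p$-adic with $p\leq d$), and checking that the hypothesis of Lemma~\ref{lem:nps}\eqref{it:vsmallroots} is actually met with a $\kappa$ that is $O(1)$ in the relevant regime. Everything else is assembling the two parts of Lemma~\ref{lem:nps} with Lemmas~\ref{lem:basinboundislocalheight} and~\ref{lem:nullstellensatz}.
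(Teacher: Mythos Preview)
Your proposal is correct and follows essentially the same approach as the paper: both arguments use Lemma~\ref{lem:nullstellensatz} with the triangle inequality to locate a critical value of size $\|\mathbf{s}\|_v^d$ (resp.\ $\|\mathbf{t}\|_v^e$), then apply the two parts of Lemma~\ref{lem:nps} to $g_{\mathbf{t}}(y)-f_{\mathbf{s}}(\sigma)$ for the upper and lower bounds on $|\tau|_v$, splitting into cases according to which of $\|\mathbf{s}\|_v$ or $\|\mathbf{t}\|_v^{e/d}$ dominates $\basin(C,v)$. The only cosmetic differences are that the paper treats the $\mathbf{t}$-dominant case first and establishes the upper bound uniformly for all critical points before exhibiting the specific one; also note a small slip in your display where $\tfrac{d}{e}\log\|\mathbf{t}\|^{e/d}$ should simplify to $\log\|\mathbf{t}\|$, not $\log\|\mathbf{t}\|^{e/d}$.
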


\begin{proof}
Note, first, that for \emph{any} critical point $\sigma$ and branch point $\sigma\to \tau$, we have
\[\log^+|\tau|_v\leq\frac{d}{e}\basin(C, v)+O(1).\]
Note that if $\sigma\in\operatorname{Crit}(C_{\mathbf{s}, \mathbf{t}})$, then either $\sigma=s_i$ for some $i$, or else $f_\mathbf{s}(\sigma)=g_{\mathbf{t}}(t_j)$ for some $j$.
By the triangle inequality, for any $s_i$ we have
\[\log|f_\mathbf{s}(s_i)|_v\leq d\log \|\mathbf{s}\|_v+O(1),\]
wherein the implied constant comes from primes up to $d$. Now, applying Lemma~\ref{lem:nps}  to the polynomial $g_{\mathbf{t}}(y)-f_\mathbf{s}(s_i)$ and using the analogue of \eqref{eq:symbound}, we have (for all $s_i\to\tau$)
\[\log|\tau|_v\leq \max\left\{\log\left|\frac{b_i}{b_e}\right|_v^{1/(e-i)}, \frac{d}{e}\log\|\mathbf{s}\|_v\right\}+O(1)=\frac{d}{e}\lambda(C_{\mathbf{s}, \mathbf{t}}, v)+O(1),\]
by Lemma~\ref{lem:basinboundislocalheight},
where the implied constant comes from primes up to $d$.

On the other hand, suppose that $\sigma\neq s_i$ for any $i$, so that $f_{\mathbf{s}}(\sigma)=g_{\mathbf{t}}(t_j)$ for some $j$. The branch values $\sigma\to\tau$ are the roots of $g_{\mathbf{t}}(y)-g_{\mathbf{t}}(t_j)$. Again by the triangle inequality and Lemma~\ref{lem:basinboundislocalheight} we have
\[\log|g_{\mathbf{t}}(t_j)|_v\leq e\log\|\mathbf{t}\|_v+O(1)\leq d\lambda(C_{\mathbf{s}, \mathbf{t}}, v)+O(1).\]
Applying Lemma~\ref{lem:nps}~\eqref{it:basicnps} to $g_{\mathbf{t}}(y)-g_{\mathbf{t}}(t_j)$, we have for each root $y=\tau$ that
\[\log|\tau|_v\leq \max\left\{\log\left|\frac{b_i}{b_e}\right|_v^{1/(e-i)}, \log\|\mathbf{t}\|_v\right\}+O(1)\leq \frac{d}{e}\lambda(C_{\mathbf{s}, \mathbf{t}}, v)+O(1)\]
as above.

 The challenge, of course, it to show that there exists a $\sigma\in \operatorname{Crit}(C_{\mathbf{s}, \mathbf{t}})$ such that this upper bound is more-or-less realized for all $\sigma\to\tau$.
By Lemma~\ref{lem:nullstellensatz} and the triangle inequality, we have some $t_j$ with 
\[\log|g_{\mathbf{t}}(t_j)|_v\geq e\log\|\mathbf{t}\|_v-C_{1, v},\] where we may take $C_{1, v}=0$ except possibly for primes $v$ up to $d$. Note that we have, for $g_{\mathbf{t}}(z)=b^ez^e+\cdots +b_1z$, that 
\begin{equation}\label{eq:beejays}\log|b_i/b_e|_v^{1/(e-i)}\leq \log\|\mathbf{t}\|_v+C_{2, v},\end{equation}
where again we may take $C_{2, v}=0$ except possibly for primes $v$ up to $d$. If we set $b_0=-g_{\mathbf{t}}(t_j)$, the polynomial \[g_{\mathbf{t}}(z)-g_{\mathbf{t}}(t_j)=b_ez^e+\cdots+b_0\] then satisfies the condition of Lemma~\ref{lem:nps}~\eqref{it:vsmallroots} with \[\kappa=\max\{0, e^{-1}C_{1, v}+e^{-1}\log|e|_v+C_{2, v}\}.\] We see that every root $y=\tau$ of $g_{\mathbf{t}}(y)-g_{\mathbf{t}}(t_j)$ then satisfies \[\log|\tau|_v\geq\log\|\mathbf{t}\|_v-C_{3, v},\]
with $C_{3, v}=0$ except possibly when $v$ is a prime up to $d$. Let $C_{4, v}$ be a constant to be chosen later, with the stipulation that $C_{4, v}=0$ except for primes $v$ up to $d$.
If $\log\|\mathbf{t}\|_v\geq \frac{d}{e}\log^+\|\mathbf{s}\|_v-C_{4, v}$, then we are done, by Lemma~\ref{lem:basinboundislocalheight}, selecting $\sigma$ to be any root of $f_{\mathbf{s}}(\sigma)=g_{\mathbf{t}}(t_j)$. 

We are left with the case that $\log\|\mathbf{t}\|_v+C_{4, v}<\frac{d}{e}\log^+\|\mathbf{s}\|_v$, and we shall for now assume that $\|\mathbf{s}\|_v\geq 1$.
Again by Lemma~\ref{lem:nullstellensatz} and the triangle inequality, there exists an $s_i$ with
\[\log|f_\mathbf{s}(s_i)|_v\geq d\log\|\mathbf{s}\|_v-C_{5, v},\]
for $C_{5, v}$ a constant with the usual confinements. Consider the polynomial
$g_{\mathbf{t}}(y)-f_{\mathbf{s}}(s_i)$. We note that the estimate~\eqref{eq:beejays} still holds for $1\leq i<e$, and now we also have
\[\log\left|\frac{-f_{\mathbf{s}}(s_i)}{b_e}\right|_v^{1/e}\geq\frac{d}{e}\log\|\mathbf{s}\|_v-\frac{C_{5, v}}{e}+\frac{\log|e|_v}{e}\]
yielding
\begin{equation}\label{eq:this}
\log\left|\frac{-f_{\mathbf{s}}(s_i)}{b_e}\right|_v^{1/e}\geq \log\left|\frac{b_i}{b_e}\right|_v^{1/(e-i)}-C_{2, v}+C_{4, v}-\frac{C_{5, v}}{e}+\frac{\log|e|_v}{e}.
\end{equation}
In other words, now choosing
\[C_{4, v}=C_{2, v}+\frac{C_{5, v}}{e}-\frac{\log|e|_v}{e},\]
we may apply Lemma~\ref{lem:nps}~\eqref{it:vsmallroots} to show that every root $y=\tau$ of $g_{\mathbf{t}}(y)-f_{\mathbf{s}}(s_i)$ satisfies
\[\log|\tau|_v\geq  \frac{d}{e}\log\|\mathbf{s}\|_v-\frac{C_{5, v}}{e}+\frac{\log|e|_v}{e}+\begin{cases} (d-1)\log d & v\text{ archimedean}\\0 & \text{otherwise.}\end{cases}\]
We are again done, by Lemma~\ref{lem:basinboundislocalheight}, unless we have $\|\mathbf{s}\|_v<1$.

In this final case, though, Lemma~\ref{lem:basinboundislocalheight} gives $\lambda(C_{\mathbf{s}, \mathbf{t}}, v)=O(1)$, for an appropriate constant, and from this we deduce the lower bound implied in \eqref{eq:teebound} from the non-negativity of $\log^+|\tau|_v$.
\end{proof}

We now define the local contribution to what will be the critical height. First, let
\[\mathscr{C}=\prod_{\sigma\in \operatorname{Crit}(C)}\path_\sigma,\]
so that an element of $\mathscr{C}$ corresponds to a particular choice of path for each critical point of $C$. Set
\[\Lambda(C, v)=\inf_{(P_1, ..., P_{de-1})\in\mathscr{C}}\max\left\{G_{C}(P_1, v), ..., G_{C}(P_{de-1}, v)\right\},\]
which is slightly different from the function $\Lambda$ mentioned in the introduction. 
Most of the content of this paper follows from the estimate in the next lemma.

\begin{lemma}\label{lem:mainlocal}
For $C$ in the form \eqref{eq:normalform} we have
\begin{equation}\label{eq:themainbound}\Lambda(C, v)=\basin(C, v)+O(1),\end{equation}
where the implied constant comes from primes up to $d$.
\end{lemma}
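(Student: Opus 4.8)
The plan is to prove the two halves of~\eqref{eq:themainbound} separately, the point being that the substance of the lower bound has already been isolated in Lemma~\ref{lem:whack}; granting that lemma, both inequalities reduce to converting size estimates for points of $C$ into estimates for $G_C$ via Lemma~\ref{lem:greensprops}, with attention paid to the provenance of the error terms. I would first record two reductions: (i) $\path_\sigma=\pi^{-1}(\sigma)$ is nonempty for every $\sigma$ by Theorem~\ref{prop:paths}, so $\mathscr{C}\neq\emptyset$ and $0\leq\Lambda(C,v)<\infty$; and (ii) since each $G_C(\cdot,v)\geq 0$ and hence $\Lambda(C,v)\geq 0$, the bound $\Lambda(C,v)\geq\basin(C,v)+O(1)$ is automatic once $\basin(C,v)$ is itself $O(1)$, so in proving the lower bound I may assume $\basin(C,v)$ exceeds any prescribed threshold of the allowed type.

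For the upper bound, since $\Lambda(C,v)$ is an infimum over $\mathscr{C}$ it suffices to exhibit one admissible family, say $(P_\sigma)_{\sigma\in\operatorname{Crit}(C)}$ with each $P_\sigma$ an arbitrary element of $\path_\sigma$, for which $\max_\sigma G_C(P_\sigma,v)\leq\basin(C,v)+O(1)$. First I would establish $\log^+|\sigma|_v\leq\basin(C,v)+O(1)$ for every critical point, with implied constant from primes up to $d$: using the description of $\operatorname{Crit}(C_{\mathbf{s},\mathbf{t}})$ recalled in Lemma~\ref{lem:whack}, either $\sigma=s_i$, or $f_{\mathbf{s}}(\sigma)=g_{\mathbf{t}}(t_j)$ for some $j$ --- in the latter case the triangle inequality bounds $\log|g_{\mathbf{t}}(t_j)|_v\leq e\log\|\mathbf{t}\|_v+O(1)$, and then Lemma~\ref{lem:nps}\eqref{it:basicnps} applied to $f_{\mathbf{s}}(x)-g_{\mathbf{t}}(t_j)$, together with~\eqref{eq:symbound}, bounds $\log|\sigma|_v$ by $\max\{\log\|\mathbf{s}\|_v,\tfrac{e}{d}\log\|\mathbf{t}\|_v\}+O(1)$ --- and in either case Lemma~\ref{lem:basinboundislocalheight} rewrites the right-hand side as $\basin(C,v)+O(1)$. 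Then I would feed this into Lemma~\ref{lem:greensprops}\eqref{it:roughlysizebasin}, noting that for $C$ in the normal form~\eqref{eq:normalform} one has $a_d=1/d$ and $b_e=1/e$, so the only place-dependence left in the error term of~\eqref{eq:stepbound}, and hence in the implied constant of~\eqref{it:roughlysizebasin}, runs through $\log^+|a_d/b_e|_v=\log^+|e/d|_v$ and the Newton-polygon corrections, all supported on archimedean places and on $p$-adic places with $p\leq d$. This yields $G_C(P_\sigma,v)\leq\basin(C,v)+O(1)$ for each $\sigma$ and, taking the maximum, the desired upper bound.

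For the lower bound, let $\sigma_0\in\operatorname{Crit}(C)$ be the distinguished critical point produced by Lemma~\ref{lem:whack}. Dropping all but the $\sigma_0$-coordinate only decreases the maximum, so $\Lambda(C,v)\geq\inf_{P\in\path_{\sigma_0}}G_C(P,v)$, and it is enough to bound $G_C(P,v)$ from below for an \emph{arbitrary} path $P$ with $\pi(P)=\sigma_0$. By the commuting diagram of Theorem~\ref{prop:paths} we have $\pi(\sigma(P))=\tau$ for some branch $(\sigma_0,\tau)\in C$, and Lemma~\ref{lem:whack} gives $\log^+|\tau|_v\geq\tfrac{d}{e}\basin(C,v)-O(1)$ with error from primes up to $d$. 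Under the standing assumption that $\basin(C,v)$ is large, the inequality $d/e>1$ forces $\log|\tau|_v>\basin(C,v)$, so Lemma~\ref{lem:greensprops}\eqref{it:uniformpart} applies to $\sigma(P)$ (whose initial point is $\tau$) and gives $G_C(\sigma(P),v)=\log^+|\tau|_v+\tfrac{1}{d-e}\log|a_d/b_e|_v+O(1)=\tfrac{d}{e}\basin(C,v)+O(1)$, the middle term being an error from primes up to $d$ because $a_d/b_e=e/d$. Finally Lemma~\ref{lem:greensprops}\eqref{it:transform} gives $G_C(P,v)=\tfrac{e}{d}G_C(\sigma(P),v)=\basin(C,v)+O(1)$, uniformly in $P$; together with the trivial small-$\basin$ case this proves $\Lambda(C,v)\geq\basin(C,v)+O(1)$, and combined with the upper bound it gives~\eqref{eq:themainbound}.

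I expect the one real obstacle to be the one already discharged by Lemma~\ref{lem:whack} (resting in turn on the Nullstellensatz input of Lemma~\ref{lem:nullstellensatz}): namely the \emph{existence} of a single critical point whose entire branch set attains the size that $\basin$ permits, so that no choice of path for that critical point can have small escape rate. Granting that, the argument above is routine apart from one subtlety --- showing that the error terms genuinely come from primes up to $d$ rather than from the full place of $K$ --- and this is precisely where the normal form~\eqref{eq:normalform} is used, since in that form every leading coefficient and every combinatorial factor entering the estimates is a rational number whose numerator and denominator have all prime factors at most $d$, hence has trivial $v$-adic valuation at every place away from the archimedean ones and the $p$-adic ones with $p\leq d$.
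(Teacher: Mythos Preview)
Your proposal is correct and follows essentially the same path as the paper's own proof: the upper bound is obtained by bounding $\log^+|\sigma|_v\leq\basin(C,v)+O(1)$ for every critical point and feeding this into Lemma~\ref{lem:greensprops}\eqref{it:roughlysizebasin}; the lower bound takes the distinguished critical point from Lemma~\ref{lem:whack}, applies Lemma~\ref{lem:greensprops}\eqref{it:uniformpart} to $\sigma(P)$, and uses~\eqref{it:transform}, with the small-$\basin$ regime handled by the non-negativity of $\Lambda$. You are somewhat more explicit than the paper about why, in normal form, the implied constant in~\eqref{it:roughlysizebasin} actually comes only from primes up to $d$, but the arguments are otherwise identical.
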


\begin{proof}
By Lemma~\ref{lem:basinboundislocalheight},  it suffices to relate $\Lambda(C_{\mathbf{s}, \mathbf{t}}, v)$ to $\log\max\{1, \|\mathbf{s}\|, \|\mathbf{t}\|^{e/d}\}$.
Note that the implied upper bound on $\Lambda(C_{\mathbf{s}, \mathbf{t}}, v)$ is quite easy, given what we have shown so far. In particular, we have
\[\log|s_i|_v\leq \basin(C_{\mathbf{s}, \mathbf{t}}, v)+O(1)\]
for any $i$, and hence Lemma~\ref{lem:greensprops}~\eqref{it:roughlysizebasin} gives that
\[G_{C_{\mathbf{s}, \mathbf{t}}}(P, v)\leq \basin(C_{\mathbf{s}, \mathbf{t}}, v)+O(1)\]
for any $P\in \path_{s_i}$. On the other hand, if $\sigma\to t_j$, then Lemma~\ref{lem:nps} and the triangle inequality give $\log|\sigma|_v\leq \basin(C_{\mathbf{s}, \mathbf{t}}, v)+O(1)$, and again we have the same bound on $G_{C_{\mathbf{s}, \mathbf{t}}}(P, v)$ for any $P\in \path_\sigma$. This establishes that $G_{C_{\mathbf{s}, \mathbf{t}}}(P, v)\leq \basin(C_{\mathbf{s}, \mathbf{t}}, v)+O(1)$ for any $P\in\path_\sigma$, for any $\sigma\in\operatorname{Crit}(C_{\mathbf{s}, \mathbf{t}})$, which is the required upper bound. It remains to show that this bound is attained.

By Lemma~\ref{lem:whack}, there exists a $\sigma\in \operatorname{Crit}(C_{\mathbf{s}, \mathbf{t}})$ such that for every $\sigma\to\tau$ we have
\[\log^+|\tau|_v\geq \frac{d}{e}\basin(C_{\mathbf{s}, \mathbf{t}}, v)-C_{6, v}.\]
Suppose at first that $C_{6, v}<\frac{d-e}{e}\basin(C_{\mathbf{s}, \mathbf{t}}, v)$.
For such a $\sigma$, and any path $\pi(P)=\sigma$, Lemma~\ref{lem:greensprops}~\eqref{it:uniformpart} now gives
\begin{eqnarray*}
G_{C_{\mathbf{s}, \mathbf{t}}}(P, v)&=&\frac{e}{d}G_{C_{\mathbf{s}, \mathbf{t}}}(\sigma(P))\\
&=&\frac{e}{d}\log^+|\tau|_v+\left(\frac{e}{d(d-e)}\right)\log\left|\frac{e}{d}\right|_v+O(1)\\
&=&\basin(C_{\mathbf{s}, \mathbf{t}}, v)+O(1),
\end{eqnarray*}
where the implied constant comes from primes up to $d$. Since \[\inf_{\pi(P)=\sigma}G_{C_{\mathbf{s}, \mathbf{t}}}(P, v)\leq \Lambda(C_{\mathbf{s}, \mathbf{t}}, v),\] we have our lower bound.

Of course, if $C_{6, v}\geq \frac{d-e}{e}\basin(C_{\mathbf{s}, \mathbf{t}}, v)$, then the lower bound implicit in~\eqref{eq:themainbound} follows from the non-negativity of $\Lambda(C_{\mathbf{s}, \mathbf{t}}, v)$.
\end{proof}

We now have the necessary material to prove the geometric result.

\begin{proof}[Proof of Theorem~\ref{th:thurston}]
Let $C/X$ be a family as in the statement of the result. After a change of variables, and possibly an extension of the base, we may assume that the correspondences are given in the form $g_{\mathbf{t}}(y)=f_{\mathbf{s}}(x)$, and that $X$ is a curve. Now, if there is some point on the projective closure of $X$ at which some $s_i$ or some $t_j$ is not regular, consider the family as a correspondence over the local ring at this point, with valuation $v$. By Lemma~\ref{lem:mainlocal}, noting that all error terms vanish for $v$, we have $\Lambda(C_{\mathbf{s}, \mathbf{t}}, v)>0$. This is impossible if the correspondence is PCC. So it follows that the $s_i$ and the $t_j$ are regular at every point of the normalization of the projective closure of $X$. In particular, they are all constant, whereupon $C$ is isotrivial.
\end{proof}

\begin{remark}
In the next section we prove various height inequalities over number fields. It is not hard to modify these arguments to work in the case of function fields, in which case we obtain a lower bound for $h_{\mathrm{Crit}}(C)$ in terms of the degrees of the coefficients. 
\end{remark}

\begin{lemma}\label{lem:cts}
The function $k^{d+e-2}\to \RR$ defined by $(\mathbf{s}, \mathbf{t})\mapsto \Lambda(C_{\mathbf{s}, \mathbf{t}}, v)$ is $v$-adically continuous.
\end{lemma}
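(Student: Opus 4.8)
The plan is to establish continuity by showing that $\Lambda(C_{\mathbf{s}, \mathbf{t}}, v)$ is a locally uniform limit of continuous functions, and hence continuous. First I would observe that, by the proof of Lemma~\ref{lem:greensprops}, the limit defining $G_{C}(P, v)$ converges locally uniformly in the coefficients: the telescoping estimate
\[
\left| \left(\tfrac{e}{d}\right)^{n+m}\log^+|P_{n+m}|_v-\left(\tfrac{e}{d}\right)^m\log^+|P_m|_v\right| \leq c\left(\tfrac{e}{d}\right)^m
\]
has an error constant $c$ that can be taken uniform as long as $(\mathbf{s}, \mathbf{t})$ ranges over a compact set and we only track paths that have already escaped past $\basin(C, v)$. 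Combined with the fact that, by Lemma~\ref{lem:greensprops}\eqref{it:roughlysizebasin}, paths that have \emph{not} escaped contribute $G_{C}(P,v)\leq \basin(C,v)+O(1)$ uniformly on compacta, this shows that on any compact set the truncated functions
\[
\Lambda_N(C_{\mathbf{s}, \mathbf{t}}, v)=\inf_{(P_i)\in\mathscr{C}}\max_i\ \left(\tfrac{e}{d}\right)^N\log^+\big|\pi\circ\sigma^N(P_i)\big|_v
\]
converge uniformly to $\Lambda(C_{\mathbf{s}, \mathbf{t}}, v)$.

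Next I would argue that each $\Lambda_N$ is continuous. The coordinate ring $A_N$ parametrizing paths of length $N$ is finite over $k[\mathbf{s},\mathbf{t}][x]$, so $\path_N:=\Spec A_N$ is a finite-type affine scheme over the parameter space, and $\pi\circ\sigma^N$ is a regular function on it; the finitely many critical points $\sigma\in\operatorname{Crit}(C_{\mathbf{s},\mathbf{t}})$ and the finitely many branches above them vary continuously (the critical points are the $s_i$ together with the roots of $f_{\mathbf{s}}(x)=g_{\mathbf{t}}(t_j)$, and $v$-adic continuity of roots of a monic-after-scaling polynomial in terms of its coefficients is standard — cf.\ the estimates in Lemma~\ref{lem:nps}). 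Taking $\log^+|\cdot|_v$ of a continuous nonvanishing-or-controlled quantity, then a finite $\max$ over branches followed by a finite $\inf$ over the finitely many choices in $\mathscr{C}$, preserves continuity. One subtlety: the number of distinct critical points, or the branching structure, can jump on a proper closed subset; there one invokes that $\log^+|\cdot|_v$ is continuous and that a path through a limit configuration is a limit of nearby paths, so the $\inf$--$\max$ expression is at worst upper semicontinuous from the generic stratum and the uniform limit patches this.

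The main obstacle I expect is precisely this stratification issue: making rigorous that the path space behaves continuously as critical points collide or as the branching type degenerates, since $\mathscr{C}$ itself is indexed by the (varying) set $\operatorname{Crit}(C)$. The clean way around it is to avoid indexing by critical points at all: note $\Lambda(C,v)$ can be rewritten as an $\inf$ over the \emph{connected} (irreducible) components of the fibre product $\prod \path_{\sigma}$, or better, as $\inf_{\mathbf{P}} \max_i G_C(P_i,v)$ where $\mathbf{P}$ ranges over the finite scheme $\operatorname{Crit}(C)\times_{\AA^1}\path$, which \emph{is} finite and flat of constant degree $de-1$ over the parameter space even through the degenerations (multiplicities absorb the collisions). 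Then $\Lambda_N$ becomes the composition of: a proper flat family of finite schemes, a regular function, $\log^+|\cdot|_v$, and $\min$/$\max$ over a fixed finite index set — a manifestly continuous recipe. Once that reformulation is in place, continuity of $\Lambda$ follows from the locally uniform convergence $\Lambda_N\to\Lambda$ established in the first step.
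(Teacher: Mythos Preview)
Your approach is essentially the same as the paper's: define the truncated functions $\Lambda_N$ (the paper calls them $\ell_n$), argue each is continuous via continuity of roots in the coefficients, and then show $\Lambda_N\to\Lambda$ uniformly on bounded subsets using the estimates from Lemma~\ref{lem:greensprops} (the paper packages the error as $O((e/d)^n\basin(C_{\mathbf{s},\mathbf{t}},v))$, which is bounded on bounded sets). The only difference is that you spend effort on the stratification issue---collisions of critical points and changes in branching type---and propose resolving it by working with the degree-$(de-1)$ finite flat scheme $\operatorname{Crit}(C)\times_{\AA^1}\path_N$ so that the index set is fixed; the paper simply asserts continuity of $\ell_n$ in one line without addressing this, so your treatment is if anything more complete on that point.
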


\begin{proof}
For each $n\geq 1$, consider the function $\ell_n:k^{d+e-2}\to \RR$ defined by
\[\ell_n(\mathbf{s}, \mathbf{t})=\left(\frac{e}{d}\right)^n\inf_{\mathbf{P}\in\mathscr{C}(C_{\mathbf{s}, \mathbf{t}})}\max\left\{\log^+|\pi\circ\sigma^n(P_1)|_v, ..., \log^+|\pi\circ\sigma^n(P_{de-1})|_v\right\}.\]
Note that, by the continuity of roots of a polynomial in the coefficients, each $\ell_n$ is a continuous function.

On the other hand, we claim that $\Lambda(C_\cdot, v)$ is the uniform limit of the $\ell_n$ on any bounded subset of $k^{d+e-2}$. 
Note that for any $\mathbf{P}\in\mathscr{C}(C_{\mathbf{s}, \mathbf{t}})$ we have (by Lemma~\ref{lem:nps}~\eqref{it:uniformpart} and~\eqref{it:roughlysizebasin})
\begin{multline*}
\left(\frac{e}{d}\right)^n\max\left\{\log^+|\pi\circ\sigma^n(P_1)|_v, ..., \log^+|\pi\circ\sigma^n(P_{de-1})|_v\right\}\\ = \max\left\{G_{C}(P_1, v), ..., G_{C}(P_{de-1}, v)\right\}+O\left(\left(\frac{e}{d}\right)^n\basin(C_{\mathbf{s}, \mathbf{t}}, v)\right),
\end{multline*}
where the implied constant is independent of the choice of paths, and of $(\mathbf{s}, \mathbf{t})$. Taking the infimum over all $\mathbf{P}\in\mathscr{C}$ thus preserves the inequalities, and so we have
\[\ell_n(\mathbf{s}, \mathbf{t})=\Lambda(C_{\mathbf{s}, \mathbf{t}}, v)+O\left(\left(\frac{e}{d}\right)^n\basin(C_{\mathbf{s}, \mathbf{t}}, v)\right).\]
This shows that the $\ell_n$ converge uniformly on bounded subsets to $\Lambda(C_\cdot, v)$, and so the latter is continuous.
\end{proof}

This, in particular, allows us to prove Theorem~\ref{th:mandelbrot}.
\begin{proof}[Proof of Theorem~\ref{th:mandelbrot}]
Working over $k=\CC$ with $v$ the usual absolute value, we have $S_{d, e}$ defined by $\Lambda(C_{\mathbf{s}, \mathbf{t}}, v)=0$, and hence $S_{d, e}$ is closed. But by Lemmas~\ref{lem:basinboundislocalheight} and~\ref{lem:mainlocal}, $S_{d, e}$ is also contained in a compact subset of $\CC^{d+e-2}$.
\end{proof}

%%%%%%%%%%%%%%%%%%%%%%%%%%%%%%%%%%%%%%%%%%%%%%%%%%%%%%%%%%%%%%%%%%%%%%
%%%%%%%%%%%%%%%%%%%%%%%%%%%%%%%%%%%%%%%%%%%%%%%%%%%%%%%%%%%%%%%%%%%%%%
%%%%%%%%%%%%%%%%%%%%%%%%%%%%%%%%%%%%%%%%%%%%%%%%%%%%%%%%%%%%%%%%%%%%%%
%%%%%%%%%%%%%%%%%%%%%%%%%%%%%%%%%%%%%%%%%%%%%%%%%%%%%%%%%%%%%%%%%%%%%%
%%%%%%%%%%%%%%%%%%%%%%%%%%%%%%%%%%%%%%%%%%%%%%%%%%%%%%%%%%%%%%%%%%%%%%
%%%%%%%%%%%%%%%%%%%%%%%%%%%%%%%%%%%%%%%%%%%%%%%%%%%%%%%%%%%%%%%%%%%%%%
%%%%%%%%%%%%%%%%%%%%%%%%%%%%%%%%%%%%%%%%%%%%%%%%%%%%%%%%%%%%%%%%%%%%%%
%%%%%%%%%%%%%%%%%%%%%%%%%%%%%%%%%%%%%%%%%%%%%%%%%%%%%%%%%%%%%%%%%%%%%%

\section{Global heights}\label{sec:global}

As discussed in \cite{corr}, the global theory of correspondences is in one fundamental way different from the global theory of (deterministic) dynamical systems. In particular, and variety $X$ over a number field $K$ has the property that every $\overline{K}$-point on $X$ is an $L$-point for some number field $L$. It thus generally suffices to sort out the global theory over a number field.

In the present context, this is not the case. Since $\path$ is typically not a scheme of finite type over the ground field, there will be many $\overline{K}$-points which are not rational over any finite extension of $K$. For this reason, we must fundamentally work over $\overline{K}$. As noted in \cite{corr}, this turns the familiar weighted sums defining global heights into integrals over the space of absolute values on $\overline{K}$, as per Gubler's theory of $M$-fields. We recall the following.
\begin{theorem}
Let $K$ be a number field, and let $M_{\overline{K}}$ be the set of absolute values on $\overline{K}$ extending those on $K$. Then there is a $\sigma$-algebra $\mathcal{B}$ of subsets of $M_{\overline{K}}$, and a measure $\mu$ such that the measure space $(M_{\overline{K}}, \mathcal{B}, \mu)$ is $\sigma$-finite, and for any finite extension $L/K$, and any absolute value $v$ on $L$, we have \[A_{v, L}=\{w\in M_{\overline{K}}: w\mid v\}\in \mathcal{B}\]
and
\begin{equation}\label{eq:localdegree}\mu(A_{v, L})=\frac{[L_v:K_v]}{[L:K]}.\end{equation}
Moreover, for any $\alpha\in \overline{K}$, the function $v\mapsto \log^+|\alpha|_v$ is measurable, and
\[h(\alpha)=\int_{M_{\overline{K}}} \log^+|\alpha|_vd\mu(v).\]
\end{theorem}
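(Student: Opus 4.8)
The plan is to realize $M_{\overline{K}}$ as a countable disjoint union of compact homogeneous spaces, each carrying a canonical probability measure, and to reduce the two asserted identities to a Haar-measure computation on $\Gal(\overline{K}/K)$.

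First I would decompose $M_{\overline{K}}=\bigsqcup_{v\in M_K}A_v$, where $M_K$ is the countable set of places of $K$ and $A_v=\{w\in M_{\overline{K}}:w\mid v\}$. For each $v$ the group $G=\Gal(\overline{K}/K)$ acts continuously on $A_v$ by $(\tau\cdot w)(x)=w(\tau^{-1}x)$, and by the conjugacy of extensions of a valuation this action is transitive with the stabilizer of a point $w_0$ equal to its decomposition group $D_{w_0}\subseteq G$, a closed subgroup with $D_{w_0}\cong\Gal(\overline{K_v}/K_v)$. Thus $A_v\cong G/D_{w_0}$ is a profinite (hence compact Hausdorff) space, and I would equip it with the pushforward $\mu_v$ of normalized Haar measure on $G$, equivalently the unique $G$-invariant Borel probability measure on $A_v$. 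Taking $\mathcal{B}$ to be the $\sigma$-algebra of sets $S$ with $S\cap A_v$ Borel in $A_v$ for every $v$, and $\mu(S)=\sum_{v\in M_K}\mu_v(S\cap A_v)$, one obtains a measure space which is $\sigma$-finite since $M_K$ is countable and $\mu(A_v)=1$.

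Next I would verify the formulas. For a finite extension $L/K$ and a place $v$ of $L$ over $v'\in M_K$, the restriction map $A_{v'}\to\{$places of $L$ over $v'\}$, $w\mapsto w|_L$, is continuous into a finite discrete set, so its fiber $A_{v,L}$ is clopen in $A_{v'}$, hence lies in $\mathcal{B}$. Fixing $w_0\in A_{v'}$ with $w_0|_L=v$ and using that $U=\Gal(\overline{K}/L)$ acts transitively on the extensions of $v$ to $\overline{K}$, one checks that $A_{v,L}$ is exactly the image of $UD_{w_0}$ in $G/D_{w_0}=A_{v'}$, so $\mu(A_{v,L})=\mu_{\mathrm{Haar}}(UD_{w_0})$. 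Now $UD_{w_0}$ is a union of $[D_{w_0}:U\cap D_{w_0}]$ distinct right cosets of $U$, each of Haar measure $[G:U]^{-1}=[L:K]^{-1}$; since $U\cap D_{w_0}$ is the decomposition group of $w_0$ over $L$, which is $\cong\Gal(\overline{K_v}/L_v)$, this index is $[L_v:K_v]$, giving $\mu(A_{v,L})=[L_v:K_v]/[L:K]$. Summing over $v\mid v'$ returns $\mu(A_{v'})=1$, a consistency check. Finally, for $\alpha\in\overline{K}$ pick a finite $L/K$ with $\alpha\in L$; then $w\mapsto\log^+|\alpha|_w$ is constant on each $A_{v,L}$ (it depends only on $w|_L$), hence is a nonnegative $\mathcal{B}$-measurable simple function, and integrating term by term over the $A_{v'}$ and inserting $\mu(A_{v,L})=[L_v:K_v]/[L:K]$ collapses the integral to $\sum_{v\in M_L}\frac{[L_v:K_v]}{[L:K]}\log^+|\alpha|_v=h(\alpha)$, once one accounts for the fact that the absolute value $w$ restricts on $L$ to the $K$-normalized extension of $v'$, which differs from the $L$-normalized $|\cdot|_v$ by precisely the reciprocal exponent $[L:K]/[L_v:K_v]$.

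The technical core is the measure computation in the third step, namely the identification of $[D_{w_0}:U\cap D_{w_0}]$ with the local degree $[L_v:K_v]$ together with the bi-invariance of Haar measure used to give each coset of $U$ equal mass; the only other delicate point is the normalization bookkeeping in the last step, reconciling the "already-weighted" absolute values implicit in $h(\alpha)=\sum_v\log^+|\alpha|_v$ with the local-degree factors in $\mu(A_{v,L})$. Neither is deep, and in fact this is essentially Gubler's construction of the $M$-field structure on $\overline{K}$, so in the paper it suffices to cite that reference rather than reproduce the argument in full.
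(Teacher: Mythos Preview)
The paper does not prove this theorem at all: it is introduced with ``We recall the following'' and is simply quoted as background from Gubler's theory of $M$-fields (reference~\cite{gubler} in the paper), with no argument given. Your sketch via the transitive $\Gal(\overline{K}/K)$-action on each fibre $A_{v'}$ and the pushforward of Haar measure is a correct and standard way to actually construct the measure, and you yourself note at the end that citing Gubler suffices; so your proposal is strictly more than what the paper does, and is consistent with it.

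One minor remark: your last clause about normalization is more cautious than necessary. Since the elements of $M_{\overline{K}}$ are by definition absolute values extending those of $K$, for $\alpha\in L$ the value $|\alpha|_w$ depends only on $w|_L$ and already equals $|\alpha|_v$ in the same normalization; the weights $[L_v:K_{v'}]/[L:K]$ then enter purely through $\mu(A_{v,L})$, and the integral collapses directly to the usual formula $h(\alpha)=\sum_{v\in M_L}\frac{[L_v:K_{v'}]}{[L:K]}\log^+|\alpha|_v$ without any further exponent adjustment.
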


Before proceeding, we must say something about the integration of error terms. Supose that $F$ and $G$ are both measurable functions on $M_{\overline{K}}$, and that $F=G+O(1)$, where the implied constant depends on $v$. Then the difference $|F-G|$ is a bounded, measurable function, but it is not clear that its integral is finite. For this, we need the difference to be bounded by an element of $L^1(M_{\overline{k}})$, and since our space has infinite measure there are certainly bounded measurable functions which are not $L^1$ (integrable). Fortunately, the constants in Section~\ref{sec:greens} can all be represented by $L^1$ functions on $M_{\overline{k}}$. Specifically, let $\epsilon:M_{\overline{K}}\to \RR$ be a function with the property that for some finite $L/K$, $\epsilon$ is constant on sets of the form $A_{w, L}$, and moreover vanishes on all but finitely many sets of this form. Then $\epsilon$ is $L^1$, and in fact
\[\int_{M_{\overline{K}}}\epsilon(v)d\mu(v)=\sum_{w\in M_L}\frac{[L_w:K_w]}{[L:K]}\epsilon(w).\]
All of the error terms in Section~\ref{sec:greens} admit upper bounds of this form.

We now define, for a path $P\in\path$,
\[\hat{h}_C(P)=\lim_{n\to\infty}\left(\frac{e}{d}\right)^{-1}h(\pi\circ\sigma^n(P)).\]
It is a consequence of Theorem~1.1 of~\cite{corr} that this limit always exists, although we give an alternate proof in this special case.
\begin{proposition}
For fixed $C$ and $P\in\path$, the function $G_C(P, \cdot)$ is measurable, and we have
\[\hat{h}_C(P)=\int_{M_{\overline{K}}}G_{C}(P, v)d\mu(v).\]
\end{proposition}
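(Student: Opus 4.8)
The plan is to establish the two claims in sequence: first measurability of $v \mapsto G_C(P,v)$, then the integral formula relating $\hat{h}_C(P)$ to the integral of the local Green's functions. For measurability, I would follow the same strategy as in the proof of Lemma~\ref{lem:cts}: for each $n$, the function $v \mapsto (e/d)^n \log^+|\pi\circ\sigma^n(P)|_v$ is measurable, since $\pi\circ\sigma^n(P)$ is a fixed element of $\overline{K}$ (the $n$-th vertex of the fixed path $P$) and the function $v \mapsto \log^+|\alpha|_v$ is measurable for every $\alpha\in\overline{K}$ by the cited theorem on $M$-fields. Then $G_C(P,v)$ is the pointwise limit of these measurable functions by Definition~\ref{def:greens}, and a pointwise limit of measurable functions is measurable.

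Next I would prove the integral identity. The key inputs are: (i) the identity $h(\alpha)=\int_{M_{\overline{K}}}\log^+|\alpha|_v\,d\mu(v)$ from the $M$-fields theorem, applied to $\alpha = \pi\circ\sigma^n(P)$, which gives
\[\left(\frac{e}{d}\right)^n h(\pi\circ\sigma^n(P)) = \int_{M_{\overline{K}}}\left(\frac{e}{d}\right)^n\log^+|\pi\circ\sigma^n(P)|_v\,d\mu(v);\]
and (ii) the uniform-in-$v$ error estimate from Lemma~\ref{lem:greensprops}. Specifically, combining parts~\eqref{it:uniformpart} and~\eqref{it:roughlysizebasin} of Lemma~\ref{lem:greensprops} (exactly as done in the proof of Lemma~\ref{lem:cts}), one gets
\[\left(\frac{e}{d}\right)^n\log^+|\pi\circ\sigma^n(P)|_v = G_C(P,v) + O\!\left(\left(\frac{e}{d}\right)^n\basin(C,v)\right),\]
where, crucially, the implied constant is absolute (independent of $v$ and of $n$). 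I should note here that $\hat{h}_C(P)$ as written in the excerpt has a typo in the exponent; the intended normalization is $\hat{h}_C(P)=\lim_{n\to\infty}(e/d)^n h(\pi\circ\sigma^n(P))$, matching Definition~\ref{def:greens}, and I would proceed with that.

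To pass to the limit under the integral sign I would invoke dominated convergence. The dominating function is $G_C(P,v)$ plus a constant multiple of $\basin(C,v)$; by Lemma~\ref{lem:greensprops}~\eqref{it:generalbound} we have $G_C(P,v) = \log^+|\pi(P)|_v + O(1)$ with the error represented by an $L^1$ function of the type described just before the proposition (constant on finitely many sets $A_{w,L}$), and $\log^+|\pi(P)|_v$ is integrable since $\pi(P)\in\overline{K}$; similarly $\basin(C,v)$ is a sum of terms $\log^+|\cdot|_v$ in the coefficients of $C$ plus an $L^1$ error, hence integrable. So the integrands $(e/d)^n\log^+|\pi\circ\sigma^n(P)|_v$ are dominated uniformly in $n$ by a fixed $L^1$ function, and they converge pointwise to $G_C(P,v)$; dominated convergence then yields
\[\hat{h}_C(P) = \lim_{n\to\infty}\int_{M_{\overline{K}}}\left(\frac{e}{d}\right)^n\log^+|\pi\circ\sigma^n(P)|_v\,d\mu(v) = \int_{M_{\overline{K}}}G_C(P,v)\,d\mu(v).\]

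The main obstacle is the bookkeeping around integrability of the error terms: one must verify that the implied constant in the Tate-telescoping estimate is genuinely absolute (which it is, per the statements of Lemma~\ref{lem:greensprops}, since the relevant error comes from the non-negativity of $G_C$ and the archimedean/finitely-many-primes estimates), and that $\basin(C,v)$ and the various $O(1)$ terms are dominated by $L^1$ functions on $M_{\overline{K}}$ in the precise sense set up in the paragraph preceding the proposition. Once that is granted, dominated convergence does all the work and there is no further subtlety; the argument is essentially the integrated version of the proof of Lemma~\ref{lem:cts}.
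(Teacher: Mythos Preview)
Your measurability argument matches the paper's essentially verbatim. For the integral identity, however, you take a different route. The paper does not invoke dominated convergence; instead it defines $\tilde{h}(P)=\int G_C(P,v)\,d\mu(v)$, uses Lemma~\ref{lem:greensprops}\eqref{it:generalbound} to show $\tilde{h}(P)=h(\pi(P))+O(1)$ (with an $L^1$ error depending only on $C$), and then integrates the transformation law~\eqref{it:transform} to obtain $\tilde{h}(\sigma(P))=(d/e)\tilde{h}(P)$. Combining these two facts yields $\tilde{h}(P)=(e/d)^n h(\pi\circ\sigma^n(P))+o(1)$, which both proves that the limit defining $\hat{h}_C(P)$ exists and identifies it with the integral. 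Your DCT approach is also correct, and your dominating function $G_C(P,\cdot)+O(\basin(C,\cdot))$ is indeed $L^1$ for fixed $C$ (the coefficients lie in a number field, so $\basin(C,\cdot)$ is a finite maximum of integrable functions plus an $L^1$ correction). The trade-off: the paper's Tate-style argument needs only the crude bound~\eqref{it:generalbound} and the exact functional equation, avoiding the finer telescoping estimate $(e/d)^n\log^+|P_n|_v=G_C(P,v)+O((e/d)^n\basin(C,v))$ and the attendant bookkeeping about which implied constants are absolute versus $v$-dependent; your approach is more direct but requires that extra care, which you correctly flag as the main obstacle.
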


\begin{proof}
First we show that $v\mapsto G_{C}(P, v)$ is measurable. To see this, note that the function $v\mapsto \log^+|\pi\circ\sigma^n(P)|_v$ is constant on the set of valuations restricting to a given valuation on the number field $K(\pi\circ\sigma^n(P))$. Since these sets are measurable, and cover $M_{\overline{K}}$, the function
\[v\mapsto \left(\frac{e}{d}\right)^n\log^+|\pi\circ\sigma^n(P)|_v\]
is measurable for each $n$. The function $G_{C}(P, \cdot)$ is thus exhibited as a pointwise limit of measurable functions, and hence is itself measurable.

Now let
\[\tilde{h}(P)=\int_{M_{\overline{K}}}G_{C}(P, v)d\mu(v),\]
which \emph{a priori} might be infinite.
From Lemma~\ref{lem:greensprops}, we have
\[G_{C}(P, v)=\log^+|\pi(P)|_v+O(1),\]
where the implied constant depends only on the restriction of $v$ to $K$, and vanishes outside of a finite set of places. In particular, the error term may be taken to be an $L^1$ function independent of $P$.
Intergrating both sides then shows that the integral defining $\tilde{h}(P)$ is finite, and gives
\[\tilde{h}(P)=h(\pi(P))+O(1),\]
where the constant does not depend on $P$.
On the other hand, it also follows from Lemma~\ref{lem:greensprops} that $\tilde{h}(\sigma(P))=\left(\frac{e}{d}\right)\tilde{h}(P)$, and so
\[\tilde{h}(P)=\left(\frac{e}{d}\right)^{n}h(\pi\circ\sigma^n(P))+o(1),\]
where $o(1)\to 0$ as $n\to\infty$. This proves that
\[\int_{M_{\overline{K}}}G_{C}(P, v)d\mu(v)=\tilde{h}(P)=\lim_{n\to\infty}\left(\frac{e}{d}\right)^{n}h(\pi\circ\sigma^n(P)),\]
and so in particular that the limit on the right exists.
\end{proof}

We will construct our critical height in a similar fashion, from local contributions, but first we must check that it makes sense to integrate these contributions.

\begin{lemma}
The function $\Lambda(C, \cdot)$ is $M_{\overline{K}}$-integrable.
\end{lemma}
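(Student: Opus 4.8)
The plan is to read this off directly from the local estimates of Section~\ref{sec:greens} together with the discussion, just above, of when an $O(1)$ error term may be majorized by an $L^1$ function on $M_{\overline{K}}$. Write $C = C_{\mathbf{s}, \mathbf{t}}$ in the normal form~\eqref{eq:normalform}. By Lemma~\ref{lem:mainlocal} we have $\Lambda(C, v) = \basin(C, v) + O(1)$, where the implied constant comes from primes up to $d$; such an error term is bounded above by a function of the shape described before the lemma (constant on the sets $A_{w, L}$ and vanishing outside finitely many of them), hence lies in $L^1(M_{\overline{K}})$. Since moreover $\Lambda(C, v) \geq 0$ (as $G_C \geq 0$ by construction), it therefore suffices to show that $v \mapsto \basin(C, v)$ is measurable and dominated by an $L^1$ function.

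For measurability, I would invoke the argument in the proof of Lemma~\ref{lem:cts}: for fixed $(\mathbf{s}, \mathbf{t})$ and each $n$, the set $\{\pi\circ\sigma^n(P) : P \in \path_\sigma\}$ is finite (every length-$n$ path from $\sigma$ extends to a full path, by Theorem~\ref{prop:paths}), so $\ell_n(\mathbf{s}, \mathbf{t})$, as a function of $v$, is a finite min--max of functions of the form $\log^+|\alpha|_v$ with $\alpha \in \overline{K}$, hence measurable; and $\Lambda(C, \cdot)$ is the uniform limit of the $\ell_n$. (Alternatively, measurability of $\basin(C, \cdot)$ is immediate from its explicit definition and the measurability of each $v \mapsto \log^+|\alpha|_v$.)

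For the domination, Lemma~\ref{lem:basinboundislocalheight} gives $\basin(C, v) = \log\max\{1, \|\mathbf{t}\|_v^{e/d}, \|\mathbf{s}\|_v\} + O(1)$, with the error again coming from primes up to $d$ and hence again $L^1$. Using $\max\{1, a, b\} \leq \max\{1,a\}\max\{1,b\}$ we get
\[
0 \leq \log\max\{1, \|\mathbf{t}\|_v^{e/d}, \|\mathbf{s}\|_v\} \leq \sum_{i=1}^{d-1}\log^+|s_i|_v + \frac{e}{d}\sum_{j=1}^{e-1}\log^+|t_j|_v,
\]
and integrating the right-hand side over $M_{\overline{K}}$ yields $\sum_i h(s_i) + \frac{e}{d}\sum_j h(t_j) < \infty$. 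Thus $\basin(C, \cdot)$, and hence $\Lambda(C, \cdot)$, is nonnegative, measurable, and bounded above by an integrable function, so it is integrable.

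There is no real obstacle here: the only point needing attention is the verification that every $O(1)$ invoked — those from Lemmas~\ref{lem:mainlocal} and~\ref{lem:basinboundislocalheight} — genuinely admits an upper bound of the required $L^1$ form, and this is exactly the content of the paragraph preceding the lemma. The statement is therefore a bookkeeping consequence of the local theory built in Section~\ref{sec:greens}.
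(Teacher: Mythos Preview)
Your argument is correct. Both you and the paper establish measurability by the same mechanism: the functions $\ell_n(v)$ are measurable (being finite min--max combinations of $\log^+|\alpha|_v$ for finitely many $\alpha$), and $\Lambda(C,\cdot)$ is their pointwise limit. One small slip: you say ``uniform limit,'' but the error $O((e/d)^n)$ carries a constant depending on $v$ through $\basin(C,v)$, so the convergence is only pointwise in $v$. That is harmless, since pointwise limits of measurable functions are measurable.

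Where you diverge from the paper is in the integrability step. You invoke Lemmas~\ref{lem:mainlocal} and~\ref{lem:basinboundislocalheight} to dominate $\Lambda(C,v)$ by $\sum_i \log^+|s_i|_v + \tfrac{e}{d}\sum_j \log^+|t_j|_v$ plus an $L^1$ error, and then integrate to heights. The paper instead argues in one line that $G_C(P_i,v)=0$ for all critical paths $P_i$ outside a finite set of places of $K$, so $\Lambda(C,\cdot)$ is supported on a set of finite measure and bounded there. Your route is more explicit and has the virtue of directly anticipating the computation in the proof of Theorem~\ref{th:main} (where one integrates $\Lambda(C,v)=\basin(C,v)+O(1)$); the paper's route is quicker but relies on the reader supplying why $\Lambda$ is bounded on the finitely many remaining sets $A_{v,K}$.
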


\begin{proof}
We exhibit the function in question as a pointwise limit of measurable functions, along the lines of the proof of Lemma~\ref{lem:cts}.
For each $n\geq 1$, consider the function
\[\ell_n(v)=\left(\frac{e}{d}\right)^n\inf_{\mathbf{P}\in\mathscr{C}}\max\left\{\log^+|\pi\circ\sigma^n(P_1)|_v, ..., \log^+|\pi\circ\sigma^n(P_{de-1})|_v\right\}.\]
Note that there are only finitely many possibilities for $\pi\circ\sigma^n(P_i)$, for each $i$, and hence all are contained in some number field $L_n/K$. The function $\ell_n$ is now constant on the set of absolute values with a given restriction to $L_n$, and hence is measurable. 

On the other hand, we claim that $\Lambda(C, \cdot)$ is the pointwise limit of the $\ell_n$. 
Note that for any tuple of paths $\mathbf{P}=(P_1, ..., P_{de-1})\in\mathscr{C}$ we have
\begin{multline*}
\left(\frac{e}{d}\right)^n\max\left\{\log^+|\pi\circ\sigma^n(P_1)|_v, ..., \log^+|\pi\circ\sigma^n(P_{de-1})|_v\right\}\\ = \max\left\{G_{C}(P_1, v), ..., G_{C}(P_{de-1}, v)\right\}+O\left(\left(\frac{e}{d}\right)^n\right),
\end{multline*}
where the implied constant is independent of the choice of paths. Taking the infimum over all $\mathbf{P}\in\mathscr{C}$ thus preserves the inequalities, and so we have
\[\ell_n(v)=\Lambda(C, v)+O\left(\left(\frac{e}{d}\right)^n\right).\]
This shows both that $\lim_{n\to\infty} \ell_n(v)$ exists, for each $v\in M_{\overline{K}}$, and that it is equal to $\Lambda(C, v)$.

To confirm that $\Lambda(C, v)$ is in fact integrable, it suffices to note that there is a finite set of places of $K$ outside of which $G_C(P_i, v)=0$ for every $i$.
\end{proof}

We now define
\[h_{\mathrm{Crit}}(C)=\int_{v\in M_{\overline{K}}}\Lambda(C, v)d\mu(v).\]
Note that for any choices of paths $P_c\in\path_c$ for $c\in\operatorname{Crit}(C)$, we have
\begin{eqnarray*}
h_{\mathrm{Crit}}(C)&=&\int_{M_{\overline{K}}} \Lambda(C, v)d\mu(v)\\
&\leq &\int_{M_{\overline{K}}} \max_{c\in \operatorname{Crit}(C)}\left\{G_{C}(P_c, v)\right\} d\mu(v)\\
&\leq &\int_{M_{\overline{K}}} \sum_{c\in \operatorname{Crit}(C)}G_{C}(P_c, v) d\mu(v)\\
&=& \sum_{c\in\operatorname{Crit}(C)}\hat{h}_C(P_c),
\end{eqnarray*}
and hence PCC correspondences must satisfy $h_{\mathrm{Crit}}(C)=0$.

\begin{proof}[Proof of Theorem~\ref{th:main}]
Recall that by Lemma~\ref{lem:mainlocal}, we have (fixing the bidegree of $C$)
\[\Lambda(C, v)=\basin(C, v)+O(1),\]
where the implied constant is independent of $C$, and comes from primes up to $d$. In particular, the error can be bounded by a function of $v$ that is $M_{\overline{K}}$-integrable.

Let $L=K(a_i, b_j)$, where $C$ is the correspondence defined by polynomials with these coefficients.
By definition, we have
\begin{eqnarray*}
h_{\mathrm{Crit}}(C)&=&\int_{M_{\overline{K}}} \Lambda(C, v)d\mu(v)\\
&=&\int_{M_{\overline{K}}} \left(\basin(C, v)+O(1)\right)d\mu(v)\\
&=&\sum_{v\in M_L} \frac{[L_v:K_v]}{[L:K]}\basin(C, v) + O(1)\\
&=&h_{\mathrm{Weil}}(C)+O(1),
\end{eqnarray*}
where the implied constant depends only on the bidegree. Note the the height $h_{Weil}$ here is the Weil height on the weighted projective completion of \[\operatorname{Spec}(\QQ[a_{d-1}, ..., a_1, b_{e-1}, ..., b_1])\] corresponding to the assignment of weight $d-i$ to $a_i$, and $d(e-j)/e$ to $b_j$.
\end{proof}

\begin{proof}[Proof of Theorem~\ref{th:heightbound}]
Theorem~\ref{th:heightbound} follows easily from Theorem~\ref{th:main} and the proof of Theorem~\ref{th:thurston}, although we must note the slightly different normal forms.

First, suppose that $C$ is of the form \eqref{eq:normal}, with $a_i, b_j\in \CC$. We may then change coordinates so that $C$ takes the form $g_{\mathbf{t}}(y)=f_{\mathbf{s}}(x)$ with $(\mathbf{s}, \mathbf{t})\in \CC^{d+e-2}$. Note that the entries of the tuples $\mathbf{s}$ and $\mathbf{t}$ are contained in some finitely generated extension $\overline{\QQ}$, which is isomorphic to the function field of some variety $X/\overline{\QQ}$. By the proof of Theorem~\ref{th:thurston}, we have a contradiction unless the $s_i$ and $t_j$ all represent constant functions on $X$. In other words, while \emph{a priori} $s_i, t_j\in \CC$, we obtain PCC correspondences only when $s_i, t_j\in\overline{\QQ}$. We may now restriction attention to correspondences defined over $\overline{\QQ}$.

As noted above, if $C$ is PCC then $h_{\mathrm{Crit}}(C)=0$, and hence Theorem~\ref{th:main} gives a bound on $h_{\mathrm{Weil}}(C)$ in terms of the bidegree. For any coefficients $a_i$, $b_j$, we then have
\[h(a_i), h(b_j)\leq dh_{\mathrm{Weil}}(C)+O(1),\]
which can be established by looking at the local contributions.
\end{proof}

%%%%%%%%%%%%%%%%%%%%%%%%%%%%%%%%%%%%%%%%%%%%%%%%%%%%%%%%%%%%%%%%%%%%%%
%%%%%%%%%%%%%%%%%%%%%%%%%%%%%%%%%%%%%%%%%%%%%%%%%%%%%%%%%%%%%%%%%%%%%%
%%%%%%%%%%%%%%%%%%%%%%%%%%%%%%%%%%%%%%%%%%%%%%%%%%%%%%%%%%%%%%%%%%%%%%
%%%%%%%%%%%%%%%%%%%%%%%%%%%%%%%%%%%%%%%%%%%%%%%%%%%%%%%%%%%%%%%%%%%%%%
%%%%%%%%%%%%%%%%%%%%%%%%%%%%%%%%%%%%%%%%%%%%%%%%%%%%%%%%%%%%%%%%%%%%%%
%%%%%%%%%%%%%%%%%%%%%%%%%%%%%%%%%%%%%%%%%%%%%%%%%%%%%%%%%%%%%%%%%%%%%%
%%%%%%%%%%%%%%%%%%%%%%%%%%%%%%%%%%%%%%%%%%%%%%%%%%%%%%%%%%%%%%%%%%%%%%
%%%%%%%%%%%%%%%%%%%%%%%%%%%%%%%%%%%%%%%%%%%%%%%%%%%%%%%%%%%%%%%%%%%%%%

\section{Unicritical correspondences}

As noted in the introduction, it is not immediately clear that there are infinitely many PCC correspondences of given bidegree over $\CC$ (or, indeed, over any particular algebraically closed field). Given the normal form \eqref{eq:normalform}, which marks certain critical and branch points, and integers $m_i<n_i$, it is easy to write down a system of $e+d-2$ equations any solution of which gives a correspondence admitting paths $P_1, ..., P_{e+d-2}$ satisfying
\[\sigma^{m_i}(P_i)=\sigma^{n_i}(P_i),\]
$\pi(P_i)=s_i$ for $1\leq i\leq d-1$, and $\pi(P_{d-1+j})=t_j$ for $1\leq j\leq e-1$. It is not obvious, though, that these equations always have solutions and, more subtly, it is not obvious that there are infinitely many distinct solutions as one varies the $m_i$ and $n_i$. \emph{A priori}, it is possible that for any choice of integers, the only solution to the above system is the trivial one (or more generally, that there are only finitely many solutions, occuring with very high multiplicity as one increases the parameters).

It would be of interest to establish that this pathology does not arise, but the situation is genuinely more intricate than in the case of single-valued dynamics. It is a consequence of Thurston's rigidity theorems~\cite{thurston} that the varieties cut out by the equations discussed above are non-singular for correspondences of the form $y=f(x)$, but this turns out not be the case in the present setting (a claim evident from the calculations below).
 
In this section we prove that there are infinitely many distinct $PCC$ correspondences of bidegree $(p, e)$, with $p>e$ a prime. In particular, we consider correspondences of the form
\begin{equation}\label{eq:mordell}C:y^e=x^p+c,\end{equation}
a natural generalization of unicritical polynomial dynamical systems. Note that it is not the case that the correspondence $C$ defined on $\AA^1$ has only one critical point, given the definition in Section~\ref{sec:geom}, but it \emph{is} true that every $a\in\operatorname{Crit}(C)$ satisfies either $a=0$ or $a^p+c= 0$, and hence $C$ is PCC if and only if there is a preperiodic path begining at $0$. For this reason, we abuse terminology somewhat and refer to these correspondences as \emph{unicritical}.

As remarked in the introduction, we will prove something stronger than Theorem~\ref{th:notallobvious}.
\begin{theorem}\label{th:fp}
For all but finitely many $n\geq 1$, there exists a $c\in\overline{\FF_p}$ such that the  correspondence defined by $y^e=x^p+c$ has a critical path of length $n$, and no critical path of length $m$ for any $m<n$.
\end{theorem}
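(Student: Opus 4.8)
The plan is to work entirely over $k=\overline{\FF_p}$ and to track the critical path starting at $0$. Over this field the correspondence $y^e=x^p+c$ has the feature that $x\mapsto x^p$ is the Frobenius, so a path $0=x_0,x_1,x_2,\ldots$ is governed by $x_{n+1}^e = x_n^p+c = x_n^{(p)}+c$ (Frobenius on the coefficient $c$ as well, if $c\notin\FF_p$). The key structural observation is that, since $\gcd(e,p)=1$, taking an $e$-th root is a bijective operation on $\overline{\FF_p}$, so from each vertex there is a well-defined ``successor set'' of size $e$; more importantly, one can run the recursion \emph{backwards}: given a target value we can solve for $c$. Concretely I would fix a would-be periodic pattern for the critical point $0$ and solve the resulting polynomial system in $c$, showing it has a solution in $\overline{\FF_p}$ by a dimension/degree count (the system is a single polynomial equation $\Phi_n(c)=0$ obtained by composing the $n$ steps), and then argue that the \emph{minimal} $n$ realized grows.

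First I would set up the iteration precisely: define $P_n(c)\in\overline{\FF_p}[c]$ to be a polynomial whose vanishing is equivalent to ``there is a path of length $n$ from $0$ back to $0$'' (using a choice of branch at each step; one gets finitely many such polynomials, one per branch-choice sequence, and takes their product $Q_n$). Because $p$ is prime and $p>e$, the degree of $Q_n$ in $c$ is positive and can be computed exactly from the recursion $x_{n+1}^e=x_n^p+c$ — each step multiplies the $x$-degree in $c$ by $p/e$ in an appropriate sense, but since we only need a crude lower bound on $\deg Q_n$ and the fact that $Q_n$ is nonconstant, a Newton-polygon / leading-term argument suffices (this is exactly the flavour of Lemma~\ref{lem:nps} and the estimates in Section~\ref{sec:greens}). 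Nonconstancy of $Q_n$ gives, over the algebraically closed field $\overline{\FF_p}$, a value $c$ with a critical path of length $n$. That handles existence of \emph{some} $c$ with a length-$n$ critical path.

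Second — and this is the real content — I would show that for all but finitely many $n$ there is a $c$ whose \emph{shortest} critical cycle through $0$ has length exactly $n$. Write $V_n\subseteq\overline{\FF_p}$ for the (finite) set of $c$ admitting a critical path of length dividing... no: admitting a critical path of length $n$, and $V_{<n}=\bigcup_{m<n}V_m$. The existence argument shows $V_n\neq\varnothing$; I want $V_n\not\subseteq V_{<n}$. The clean way is a counting/height comparison: using Theorem~\ref{th:heightbound} in its function-field incarnation (or directly the local estimate Lemma~\ref{lem:mainlocal} over the valuation at $\infty$ of $\overline{\FF_p}(c)$), every $c\in V_m$ is integral of bounded ``complexity'', and more usefully, the polynomial $Q_m$ has degree bounded by an explicit function $B(m)$ of $m$ (again from the $x_{n+1}^e=x_n^p+c$ recursion, whose $x$-degree in $c$ after $m$ steps is $(p/e)^{m}$-ish, hence $\deg Q_m \le C^{m}$ for an explicit $C$ depending on $d=p,e$). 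Then $\#V_{<n}\le \sum_{m<n}\deg Q_m \le C'\,C^{n-1}$, whereas I will show $\deg Q_n \ge c_0\,C^{n}$ for some $c_0>0$ — a strictly larger growth rate because the top step of the length-$n$ recursion genuinely contributes — so for $n$ large $\#V_n > \#V_{<n}$ forces $V_n\setminus V_{<n}\neq\varnothing$. (Alternatively: show $Q_n$ is separable for large $n$ and that no root of $Q_n$ is a root of $Q_m$ for $m<n$ by a valuation-theoretic argument, paralleling the rigidity discussion — but the crude counting is safer given the singularities flagged in Section~5.)

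The main obstacle is the second step: controlling $\#V_{<n}$ from above while getting a strictly dominant lower bound on $\#V_n$. The upper bound needs a genuine (not just asymptotic) degree bound on $Q_m$, which requires keeping track of how the branch choices proliferate; the lower bound needs to rule out the ``pathology'' mentioned just before Theorem~\ref{th:fp} — that all solutions collapse with huge multiplicity — which is where separability of $Q_n$ for large $n$, or at least a lower bound on the number of distinct roots, must be extracted. I expect this to come from reducing modulo a well-chosen place and using that at the valuation where $c$ is large the path is forced (Lemma~\ref{lem:whack}/Lemma~\ref{lem:mainlocal}), pinning down $\log|x_i|$ and hence separating the branches; this is precisely the archimedean-free, $p>d$ setting in which all the error terms in Section~\ref{sec:greens} vanish, so the estimates are sharp enough to distinguish cycle lengths.
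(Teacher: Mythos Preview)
Your framework is right---define polynomials over $\FF_p[c]$ whose roots are the $c$-values admitting a length-$n$ critical cycle, and look for a primitive root---and you correctly identify the obstruction: the polynomials are \emph{not} separable (already $f_2(c)=c^p-c^e$ has $c=0$ as an $e$-fold root), so the inequality $\deg Q_n>\sum_{m<n}\deg Q_m$ does \emph{not} give $\#V_n>\#V_{<n}$. Your proposed fixes don't close this gap. The ``crude counting'' compares $\deg Q_n$ to $\#V_{<n}$, but what you need is a lower bound on the number of \emph{distinct} roots of $Q_n$, and nothing you've written provides one. The appeal to Lemmas~\ref{lem:whack}--\ref{lem:mainlocal} is misdirected: those are escape-rate estimates for the height machinery and say nothing about multiplicities of roots in $\FF_p[c]$.

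The paper's resolution is different in shape. Rather than lower-bounding $\#V_n$, it upper-bounds the \emph{total multiplicity in $f_n$} of the old roots. Two ingredients make this possible. First, over $\FF_p$ the resultant definition collapses (the $p$-th root is unique, with multiplicity $p$) to the explicit recursion
\[
f_{n+1}(c)=c^{p^{n}}-f_n(c)^e,
\]
so in particular $\deg f_n=p^{n-1}$. Second, from this recursion one proves a Zsygmondy-type valuation lemma: if $\mathfrak p\subseteq\FF_p[c]$ first divides $f_r$, then $v_{\mathfrak p}(f_n)=e^{k-1}v_{\mathfrak p}(f_r)$ when $n=kr$ and $v_{\mathfrak p}(f_n)=0$ otherwise. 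Summing over all old primes, the total old multiplicity in $f_n$ is at most $\sum_{s\mid n,\,s\neq 1}e^{s-1}p^{n/s-1}$, which is $o(p^{n-1})$ precisely because $e<p$. Since $\deg f_n=p^{n-1}$, a primitive prime factor is forced for large $n$. The point you were missing is that the inseparability you feared is exactly what is being \emph{exploited}: Frobenius makes the recursion clean enough to compute multiplicities exactly, and the hypothesis $e<p$ is what makes the old multiplicities lose to the degree.
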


Note that the claim in the theorem is stronger than the claim that the correspondence has a critical path of length exactly $n$. Since the dynamics is multi-valued, it is imaginable that this correspondence has two critical paths of different (finite) lengths.

\begin{proof}[Proof of Theorem~\ref{th:notallobvious} given Theorem~\ref{th:fp}]
We define a sequence of polynomials $R_n(w, c)\in \ZZ[w, c]$ by $R_0(w, c)=w$ and
\[R_{n+1}(w, c)=\operatorname{Res}_z(R_n(z, c), z^p+c-w^e).\]
Note that $R_n(w, c)=0$ if and only if there is a path of length $n$ in the correspondence defined by \eqref{eq:mordell} originating at $0$ and terminating at $w$. In particular, if we set
\[f_n(c)=R_n(0, c),\]
then $f_n(c)=0$ if and only if $0$ is contained in a cycle of length dividing $n$ for the correspondence defined by \eqref{eq:mordell}. Note that the $f_n(c)\in\ZZ[c]$ are monic.

Now, if Theorem~\ref{th:notallobvious} fails, then there are only finitely many distinct solutions to $f_n(c)=0$ as $n$ varies. These solutions must all be algebraic, contained say in the number field $K/\QQ$. If $\pf$ is a prime of $K$ dividing $p$, then reduction-modulo-$\pf$ maps the solutions of $f_n(c)=0$ in $K$ surjectively to the solutions in $\Ocal_K/\pf\Ocal_K$ (where $\Ocal_K$ is the ring of integers of $K$), and hence there are only finitely many distinct solutions to $f_n(c)=0$ in $\overline{\FF_p}$ as $n$ varies. But then there exists a finite bound $B$ such that if $c\in\overline{\FF_p}$ and $y^e=x^p+c$ has a periodic critical path, it has one of length at most $B$. This contradicts Theorem~\ref{th:fp}.
\end{proof}

For the remainder of this section, we work over $\FF_p$, and we will consider the image of the polynomials $f_n$ defined in the proof of Theorem~\ref{th:notallobvious} in $\FF_p[c]$. Our proof comes down to showing that, for $n$ large enough, the polynomial $f_n$ has an irreducible factor which is not a factor of $f_m$ for any $m<n$, that is, a \emph{primitive prime factor}. If $c\in\overline{\FF_p}$ is a root of this primitive prime factor, then there will be cycle in the correspondence \eqref{eq:mordell} containing 0 which is of length dividing $n$, but no cycle of length dividing $m$ for any $m<n$. In other words, the cycle will have length precisely $n$, and there will be no smaller cycle containing $0$.

\begin{lemma}
For all $n\geq 1$,
%\[R_{n+1}(w, c)=c^{p^{n}}- R_n(w, c)^e.\]
\[f_{n+1}(c)=c^{p^{n}}- f_n(c)^e.\]
\end{lemma}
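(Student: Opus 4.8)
The plan is to unwind the two defining relations $R_0(w,c)=w$, $R_{n+1}(w,c)=\operatorname{Res}_z(R_n(z,c),\,z^p+c-w^e)$ and the substitution $f_n(c)=R_n(0,c)$, and to exploit the very special shape of the polynomial $z^p+c-w^e$ as a polynomial in $z$: it is \emph{Eisenstein-like} in that, modulo $p$ over $\FF_p$, it equals $z^p - (w^e - c) = (z - (w^e-c)^{1/p})^p$ as a polynomial over the perfect field $\overline{\FF_p}$ — but more usefully, for computing a resultant we do not even need that. I would first record the basic resultant identity $\operatorname{Res}_z(A(z), z^p+c-w^e) = \prod_{A(\beta)=0} (\beta^p + c - w^e)$ (up to the leading coefficient of $A$, which is $1$ since the $R_n$ and hence the relevant polynomials are monic in $z$ — this monicity should be checked by an easy induction, since $z^p+c-w^e$ is monic in $z$ and resultants of monic polynomials against monic polynomials are monic). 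Alternatively, and more cleanly, use $\operatorname{Res}_z(A(z), z^p - u) = (-1)^{?}\operatorname{Res}_z(z^p-u, A(z)) = \prod_{\zeta^p = u} A(\zeta)$ — but over $\FF_p$ the $p$-th roots are a single repeated root, so $\prod_{\zeta^p=u}A(\zeta) = A(u^{1/p})^p$; combined with Frobenius $A(u^{1/p})^p = A^{(p)}(u)$ where $A^{(p)}$ raises coefficients to the $p$-th power, and over $\FF_p$ the Frobenius fixes coefficients, so this is just... this needs care.

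The cleanest route is the first one. Set $u = w^e - c$, so $z^p + c - w^e = z^p - u$. Then, working in $\FF_p[w,c]$ and using that $R_n(z,c)$ is monic of some degree $\delta_n$ in $z$,
\[
R_{n+1}(w,c) = \operatorname{Res}_z\bigl(R_n(z,c),\, z^p - u\bigr) = (-1)^{p\delta_n}\operatorname{Res}_z\bigl(z^p - u,\, R_n(z,c)\bigr) = (-1)^{p\delta_n}\!\!\prod_{\zeta:\zeta^p=u}\!\! R_n(\zeta, c).
\]
Over $\overline{\FF_p}$ the equation $\zeta^p = u$ has the single root $\zeta = u^{1/p}$ with multiplicity $p$, so the product is $R_n(u^{1/p}, c)^p$. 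Now apply the Frobenius: $R_n(u^{1/p},c)^p = R_n^{(p)}(u, c^p)$ where $R_n^{(p)}$ denotes raising every coefficient to the $p$-th power; but $R_n \in \FF_p[w,c]$ and Frobenius is the identity on $\FF_p$, so $R_n^{(p)} = R_n$ as a polynomial, giving $R_n(u^{1/p},c)^p = R_n(u, c^p)$. Hence $R_{n+1}(w,c) = \pm R_n(w^e - c,\, c^p)$, and the sign is $+1$ once one tracks degrees (or one simply notes both sides are monic of the same degree in the appropriate variable). Substituting $w = 0$ yields $f_{n+1}(c) = R_{n+1}(0,c) = R_n(-c,\,c^p)$.

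It then remains to identify $R_n(-c, c^p)$ with $c^{p^n} - f_n(c)^e$. Here I would set up a parallel induction: I claim $R_n(w, c) \equiv R_n(0,c) - w^e \cdot(\text{something})$ is too weak; instead the right statement to carry is that, as a polynomial in $w$, $R_n(-c, c^p)$ relates to $R_n(0, c^p)$ by the shift coming from the structure of the correspondence. Actually the slick way: note that $f_n$ satisfies, by the same computation with $w=0$ done one step earlier, $f_{n+1}(c) = R_n(-c, c^p)$, and separately one shows directly from the recursion $R_{n+1}(w,c) = R_n(w^e-c, c^p)$ that $R_n(w, c) = $ an iterated composition, from which $R_n(-c, c^p)$ can be evaluated by splitting off the outermost layer: $R_n(-c,c^p)$ is $R_{n-1}$ evaluated at $w^e - c$ with $w^e = 0$... — wait, one must instead iterate the other direction. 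The honest approach: prove by induction on $n$ that $f_{n+1}(c) = c^{p^n} - f_n(c)^e$ using $f_{n+1}(c) = R_n(-c, c^p)$ together with a second inductively-maintained identity expressing $R_n(w,c)$ in the form $(\text{pure power of } c \text{ and } w) \ \pm\ (\text{lower iterate})$. The main obstacle is bookkeeping the exact form of $R_n(w,c)$ precisely enough that the substitution $w \mapsto -c$, $c\mapsto c^p$ collapses to the stated closed form — in particular getting the exponent $p^n$ and the outer $e$-th power correctly aligned, and confirming the sign. I expect that once the identity $R_{n+1}(w,c) = R_n(w^e - c,\, c^p)$ over $\FF_p$ is in hand (the genuinely new input, powered by the characteristic-$p$ collapse of $p$-th roots), the rest is a short formal induction; so the characteristic-$p$ resultant computation is the crux, and everything downstream is routine.
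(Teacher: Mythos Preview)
Your key insight---that in characteristic $p$ the polynomial $z^p-(w^e-c)$ has a unique root $\alpha$ of multiplicity $p$, so the resultant collapses to $R_n(\alpha,c)^p$ and Frobenius finishes the job---is exactly the engine the paper uses. But your argument is genuinely incomplete. Having applied Frobenius to the whole of $R_n(\alpha,c)^p$ to obtain the functional equation $R_{n+1}(w,c)=\pm R_n(w^e-c,\,c^p)$, you set $w=0$ to get $f_{n+1}(c)=\pm R_n(-c,c^p)$ and then never actually connect this to $c^{p^n}-f_n(c)^e$; the paragraph beginning ``It then remains to identify\ldots'' is a sequence of false starts ending in a promissory note. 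The ``second inductively-maintained identity'' you say you need is not a minor bookkeeping matter---it is essentially the whole lemma.

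The paper's organization avoids this detour: it proves the stronger statement $R_{n+1}(w,c)=c^{p^{n}}-R_n(w,c)^e$ directly by induction on $n$, and the step is a one-liner. From $R_{n+1}(w,c)=R_n(\alpha,c)^p$ with $\alpha^p=w^e-c$, apply the inductive hypothesis \emph{before} Frobenius:
\[
R_n(\alpha,c)^p=\bigl(c^{p^{n-1}}-R_{n-1}(\alpha,c)^e\bigr)^p=c^{p^n}-\bigl(R_{n-1}(\alpha,c)^p\bigr)^e,
\]
and then recognize $R_{n-1}(\alpha,c)^p$ as the resultant defining $R_n(w,c)$, by the very same characteristic-$p$ collapse one level down. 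That is the ``second identity'' you were searching for: it is just the target statement itself, carried at the $R$-level rather than the $f$-level. One further caution: your claim that the $R_n$ are monic in $w$ (hence that the sign is trivially $+1$) is false---already $R_1=c-w^e$ has leading coefficient $-1$---so the factor $(-1)^{p\,\deg_w R_n}$ from the resultant symmetry needs to be tracked honestly; this is harmless for the downstream lemmas, which only use the roots of $f_n$, but it is not something you can wave away.
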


\begin{proof}
We will prove that
\[R_{n+1}(w, c)=c^{p^{n}}- R_n(w, c)^e,\]
from which the lemma will follow by specializing at $w=0$.

It is easy to check from the definition that
\[R_0(w, c)=w\] and so
\[R_{0+1}(w, c)=\operatorname{Res}_z(z, z^p+c-w^e)=c-w^e=c^{p^0}-R_0(w, c)^e.\]
We proceed from here by induction. By definition,
\[R_{n+1}(w, c)=\prod_{\alpha^p=w^e-c}R(\alpha, c),\]
where we take $\alpha\in\overline{\FF_p(w, c)}$. But there is a unique such $\alpha$, which occurs with multiplicity $p$, and hence we have
\begin{eqnarray*}
R_{n+1}(w, c)&=&R_n(\alpha, c)^p\\
&=&\left(c^{p^{n-1}}-R_{n-1}(\alpha, c)^e\right)^p\\
&=&c^{p^{n}}-\left(R_{n-1}(\alpha, c)^p\right)^e\\
&=&c^{p^{n}}-\operatorname{Res}_z(R_{n-1}(z, c), z^p+c-w^e)^e\\
&=&c^{p^{n}}-R_n(w, c)^e.
\end{eqnarray*}
\end{proof}

\begin{lemma}
Let $\pf\subseteq \FF_p[c]$ be a prime ideal, and let $r$ be the least integer such that $f_r\in \pf$, if one exists. Then for all $n$,
\[v_\pf(f_n)=\begin{cases}
 e^{k-1}v_\pf(f_r) & n=rk\\
 0 & r\nmid n
\end{cases}
\]
\end{lemma}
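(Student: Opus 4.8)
The plan is to reduce the lemma to a single ``return-map'' identity for the sequence $(f_n)$ and then read off the $\pf$-adic valuations. Write $v = v_\pf$; if no such $r$ exists then $f_n \notin \pf$ for every $n \ge 1$ and there is nothing to prove, so assume $r$ exists and set $m = v(f_r) \ge 1$. Introduce auxiliary polynomials $R^{(j)}(Y,Z) \in \FF_p[Y,Z]$ by $R^{(1)} = Y - Z$ and $R^{(j+1)} = Y^{p^j} - \bigl(R^{(j)}\bigr)^e$. Using the recursion $f_{n+1} = c^{p^n} - f_n^e$ (valid also at $n = 0$ with $f_0 = 0$, so that $f_1 = c$) together with $c^{p^{n+j}} = \bigl(c^{p^n}\bigr)^{p^j}$, an induction on $j$ yields
\[ f_{n+j} = R^{(j)}\!\bigl(c^{p^n},\, f_n^e\bigr) \qquad (n \ge 0,\ j \ge 1); \]
the point is that the exponents $p^{n+i}$ arising along the way are absorbed into the variable $Y := c^{p^n}$, leaving a polynomial independent of $n$. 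Comparing the defining recursions gives $R^{(j)}(Y,0) = f_j(Y)$, so that $R^{(j)}(Y,Z) = f_j(Y) + \rho_j(Y)\,Z + Z^2(\cdots)$ with $\rho_1 = -1$ and $\rho_{j+1} = -e\,f_j(Y)^{e-1}\rho_j$, whence $\rho_j(Y) = \pm\,e^{j-1}\prod_{i=1}^{j-1} f_i(Y)^{e-1}$.

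Two elementary observations then finish the job. First, every $f_n$ (and every $R^{(j)}$ and $\rho_j$) has coefficients in $\FF_p$, so $g(c^{p^j}) = g(c)^{p^j}$ for $g \in \FF_p[c]$, and hence $v\bigl(g(c^{p^j})\bigr) = p^j\,v\bigl(g(c)\bigr)$. Second, since $0 < e < p$ the integer $e$ and all binomial coefficients $\binom{e}{i}$ are units at $\pf$; together with the minimality of $r$, which gives $v(f_i(c)) = 0$ for $1 \le i \le r-1$, this shows $v(\rho_r(c)) = 0$, that is, $\rho_r(c)$ is a unit.

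The core is an induction on $k \ge 1$ proving $v(f_{kr}) = e^{k-1}m$, the base case being the definition of $m$. For the inductive step apply the identity above with $n = kr$ and $j = r$:
\[ f_{(k+1)r} = f_r\!\bigl(c^{p^{kr}}\bigr) + \rho_r\!\bigl(c^{p^{kr}}\bigr)\,f_{kr}^e + \bigl(f_{kr}^e\bigr)^2(\cdots). \]
By the first observation $v\bigl(f_r(c^{p^{kr}})\bigr) = p^{kr}m$; the middle term has valuation $v\bigl(\rho_r(c)^{p^{kr}}\bigr) + e\,v(f_{kr}) = 0 + e\cdot e^{k-1}m = e^k m$ by the inductive hypothesis and the unit claim; and every remaining term has valuation $\ge 2e^k m$. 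Since $r \ge 1$ and $p > e$ force $p^{kr}m > e^k m$, the middle term is the unique one of least valuation, so $v(f_{(k+1)r}) = e^k m$, as required. Finally, if $r \nmid n$ write $n = kr + s$ with $1 \le s \le r-1$: for $k = 0$ this is the minimality of $r$, and for $k \ge 1$ the identity (with $j = s$) gives $f_{kr+s} = f_s\bigl(c^{p^{kr}}\bigr) + (\text{terms of valuation} \ge e^k m)$, where $v\bigl(f_s(c^{p^{kr}})\bigr) = p^{kr}\cdot 0 = 0$, so $v(f_n) = 0$. These two cases are exactly the asserted formula.

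The step that requires the most care is the inductive step for the multiples of $r$: one must isolate the ``constant term'' $f_r(c^{p^{kr}})$ and the ``linear term'' $\rho_r(c^{p^{kr}})\,f_{kr}^e$ in the expansion of $f_{(k+1)r}$, check that the linear coefficient is a unit --- this is precisely where the minimality of $r$ and the hypothesis $p > e$ are used --- and check that the constant term is negligible beside it, which follows from $f_r(c^{p^{kr}}) = f_r(c)^{p^{kr}}$ together with $p > e$. Controlling the higher-order terms is routine once $e$ and the binomial coefficients are known to be units.
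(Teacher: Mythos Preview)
Your proof is correct and follows essentially the same strategy as the paper's: both arguments track the $\pf$-adic leading term of $f_{kr}$ through $r$ applications of the recursion, using Frobenius to identify $f_j(c^{p^{kr}})$ with $f_j(c)^{p^{kr}}$ and the inequality $p^{kr}>e^k$ to discard the ``constant'' contribution. Your auxiliary polynomials $R^{(j)}(Y,Z)$ package the paper's step-by-step computation $f_{nr}\to f_{nr+1}\to\cdots\to f_{(n+1)r}$ into a single Taylor expansion in $Z=f_{kr}^e$, which is a tidier bookkeeping device but not a genuinely different idea.
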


\begin{proof}
By hypothesis, we have $f_r\equiv 0\MOD{\pf}$, but $f_j\not\equiv 0\MOD{\pf}$ for all $1\leq j< r$. Now, by the above lemma, we have
\[f_{r+1}=c^{p^{r}}-f_r^e\equiv c^{p^{r}}\equiv f_1^{p^{r}}\MOD{\pf},\]
and similarly
\[f_{r+2}=c^{p^{r+1}}-f_{r+1}^e\equiv c^{p^{r+1}}- f_1^{p^{r}}\equiv\left(c^p-f_1\right)^{p^{r}}\equiv f_2^{p^{r}}\MOD{\pf}.\]
Indeed, proceeding by induction, it is easy to see that $f_{r+j}\equiv f_j^{p^{r}}\MOD{\pf}$ for all $j\geq 0$. It follows that $v(f_n)=0$ unless $r\mid n$.

Now let $\pi\in\FF_p[c]$ be an irreducible polynomial generating $\pf$, and write
\[f_j=b_j+O(\pi)\text{ for }1\leq j<r\]
and
\[f_r=A\pi^{m}+O\left(\pi^{m+1}\right),\]
where $A, b_j\not\in\pf$. We will prove by induction on $k$ that
\[v(f_{kr})=e^{k-1}m,\]
which clearly holds in the case $k=1$.

Now suppose that the claim holds for $k=n$, and write
\[f_{nr}=a_1\pi^{me^{n-1}}+O\left(\pi^{me^{n-1}+1}\right),\]
with $a_1\not\in \pf$.  As above, we have
\[f_{nr+1}=c^{p^{nr}}-f_{nr}^e=f_1^{p^{nr}}-a^e\pi^{me^n}+O\left(\pi^{me^n+1}\right).\]
Similarly,
\begin{eqnarray*}
f_{nr+2}&=&c^{p^{nr+1}}-f_{nr+1}^e\\
&=&c^{p^{nr+1}}-\left(f_1^{p^{nr}}-a_1^e\pi^{me^n}+O\left(\pi^{me^n+1}\right)\right)^e\\
&=&c^{p^{nr+1}}-f_1^{ep^{nr}}+eb_1^{e-1}a_1^e\pi^{me^n}+O\left(\pi^{me^n+1}\right)\\
&=&f_2^{p^{nr}}-a_2\pi^{me^n}+O\left(\pi^{me^n+1}\right)\\
\end{eqnarray*}
with $a_2\not\in\pf$. Proceeding in this fashion, we have
\[f_{nr+r}=f_r^{p^{nr}}-a_r\pi^{me^n}+O\left(\pi^{me^n+1}\right),\]
with $a_r\not\in\pf$. Since $f_r^{p^{nr}}=O\left(\pi^{mp^{nr}}\right)$, and since $mp^{nr}\geq me^n+1$, we have
\[f_{nr+r}=-a_r\pi^{me^n}+O\left(\pi^{me^n+1}\right).\]
By induction, the result follows.
\end{proof}

\begin{lemma}
For all $n$ sufficiently large, there is a prime ideal $\pf\subseteq\FF_p[c]$ with $f_n\in \pf$ but $f_j\not\in \pf$ for all $1\leq j<n$.
\end{lemma}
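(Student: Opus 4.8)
The plan is to prove this as a primitive-prime-divisor (Zsygmondy-type) statement for the sequence $(f_n)$ in $\FF_p[c]$, using only the two preceding lemmas together with elementary degree counting. First I would record that, since $f_1(c)=c$ and $f_{n+1}(c)=c^{p^n}-f_n(c)^e$ with $e<p$, an immediate induction gives that $f_n$ is monic of degree $p^{n-1}$; in particular $f_n\neq 0$, and writing $f_n=\prod_\pi \pi^{v_\pi(f_n)}$ over the monic irreducibles $\pi$ of $\FF_p[c]$ we have $\sum_\pi v_\pi(f_n)\deg\pi = p^{n-1}$. For a monic irreducible $\pi$ dividing some $f_m$, let $r(\pi)$ denote the least index $r$ with $\pi\mid f_r$. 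The valuation lemma says precisely that $\pi\mid f_n$ if and only if $r(\pi)\mid n$, and that in that case $v_\pi(f_n)=e^{\,n/r(\pi)-1}\,v_\pi(f_{r(\pi)})$. Consequently a \emph{primitive prime divisor} of $f_n$, i.e.\ a prime $\pf=(\pi)$ with $f_n\in\pf$ and $f_j\notin\pf$ for all $1\le j<n$, is exactly a $\pi$ with $r(\pi)=n$; so the statement reduces to showing such a $\pi$ exists for all sufficiently large $n$.

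I would then argue by contradiction: suppose $f_n$ has no primitive prime divisor. Then every $\pi\mid f_n$ has $r(\pi)=r$ for some proper divisor $r$ of $n$, and $\{\pi:r(\pi)=r\}\subseteq\{\pi:\pi\mid f_r\}$. Partitioning the prime divisors of $f_n$ by the value of $r(\pi)$, applying the valuation lemma, and then enlarging each inner sum to all prime divisors of $f_r$ yields
\[
p^{n-1}=\deg f_n=\sum_{\substack{r\mid n\\ r<n}}e^{\,n/r-1}\!\!\sum_{r(\pi)=r}\! v_\pi(f_r)\deg\pi\ \le\ \sum_{\substack{r\mid n\\ r<n}}e^{\,n/r-1}\deg f_r=\sum_{\substack{r\mid n\\ r<n}}e^{\,n/r-1}p^{\,r-1}.
\]
It would then remain to show the right-hand side is $o(p^{n-1})$, which contradicts the equality once $n$ is large. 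I would split off the term $r=1$, which equals $e^{n-1}$ and is $o(p^{n-1})$ since $e<p$; for every proper divisor $r\ge 2$ one has $n/r\le n/2$ and $r\le n/2$, so $e^{\,n/r-1}p^{\,r-1}\le (ep)^{\,n/2-1}$, and summing the fewer than $n$ such terms bounds their total by $n(ep)^{\,n/2-1}$, which is again $o(p^{n-1})$ because $n(ep)^{\,n/2-1}/p^{\,n-1}=n(e/p)^{\,n/2-1}/p\to 0$. Thus $p^{n-1}\le e^{n-1}+n(ep)^{\,n/2-1}=o(p^{n-1})$, the desired contradiction, valid for all $n$ beyond an explicit threshold $n_0(p,e)$.

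The one delicate point is this last estimate. The lazy bound $e^{\,n/r-1}p^{\,r-1}\le e^{\,n-1}p^{\,n/2-1}$ is worthless when $e$ is close to $p$ (e.g.\ $e=p-1$), so it is essential to use that, away from $r=1$, \emph{both} exponents $n/r-1$ and $r-1$ are at most $n/2-1$, which gives the clean bound $(ep)^{\,n/2-1}$ with $(ep)^{\,n/2-1}/p^{\,n-1}=(e/p)^{\,n/2-1}/p\to 0$. Everything else — the degree identity $\deg f_n=p^{n-1}$, the identification of primitive prime divisors with the indices $r(\pi)=n$, and the partition of the factorization of $f_n$ by $r(\pi)$ — is routine given the valuation lemma.
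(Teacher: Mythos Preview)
Your proposal is correct and follows essentially the same route as the paper: assume no primitive prime divisor, partition the prime factors of $f_n$ by their first index of appearance, apply the valuation lemma, bound each block by $\deg f_r$, and show the resulting sum is $o(p^{n-1})$. The only cosmetic differences are that the paper indexes by $s=n/r$ rather than $r$, and in the final estimate bounds the middle terms via $e^{s-1}p^{\,n/s-1}\le p^{\,s+n/s-2}\le p^{\,n/2}$ (using $e<p$ and $s+n/s\le 2+n/2$) instead of your $(ep)^{\,n/2-1}$; both estimates are valid and yield the same conclusion.
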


\begin{proof}
Suppose that $n$ is a value for which the claim fails, that is, for which we have $f_j\in \pf$ for some $1\leq j<n$ whenever $\pf\subseteq\FF_p[c]$ is a prime ideal with $f_n\in\pf$. Our aim is to bound $n$.

If $r_\pf$ is the index of the first polynomial divisible by $\pf$, our hypothesis gives
\[v_{\pf}(f_n)=e^{\frac{n}{r_\pf}-1}v_{\pf}(f_{r_\pf})\]
for each $\pf\mid f_n$. Observe that
\begin{eqnarray}
p^{n-1}&=&\deg(f_n)\nonumber \\
&=&\sum_{\pf\mid f_n}v_{\pf}(f_n)\deg(\pf)\nonumber \\
&=&\sum_{\pf\mid f_n}e^{\frac{n}{r_\pf}-1}v_{\pf}(f_{r_\pf})\deg(\pf)\nonumber \\
&=&\sum_{\substack{s\mid n\\ s\neq 1}}e^{s-1}\sum_{r_\pf=n/s}v_{\pf}(f_{r_\pf})\deg(\pf)\nonumber \\
&\leq &\sum_{\substack{s\mid n\\ s\neq 1}}e^{s-1}\deg(f_{n/s})\nonumber \\
&=&\sum_{\substack{s\mid n\\ s\neq 1}}e^{s-1}p^{\frac{n}{s}}\nonumber \\
&=&e^{n-1}p+\sum_{\substack{s\mid n\\ s\neq 1, n}}e^{s-1}p^{\frac{n}{s}}\nonumber \\
&\leq &e^{n-1}p+\sum_{\substack{s\mid n\\ s\neq 1, n}}p^{s+\frac{n}{s}-1}\label{eq:pbigger}\\
&\leq &e^{n-1}p+np^{1+n/2}\label{eq:bound},
\end{eqnarray}
where \eqref{eq:pbigger} follows from our assumption that $e<p$, and \eqref{eq:bound} from the fact that $n$ has at most $n$ divisors, and for each divisor $s$ we have $s+\frac{n}{s}\leq 2+\frac{n}{2}$ (except $s=1$ or $s=n$, which we have excluded). Since
\[\frac{e^{n-1}p+np^{1+n/2}}{p^{n-1}}\to 0\] as $n\to \infty$, the inequality above bounds $n$. 
\end{proof}

\begin{remark}
In the case $e=2$, $p=3$, one can check that the inequality \eqref{eq:bound} implies $n\leq 7$, and so for each $n\geq 8$ the polynomial $f_n(c)\in \FF_3[c]$ has a primitive prime factor. On the other hand, one can confirm by computation that
%\begin{gather*}
%f_1(c)=c\\
%f_2(c)=c^2(c+2)\\
%f_3(c)=c^4(c^2+1)(c^3 + 2c + 2)\\
%f_4(c)= c^8(c + 2)^2(c^3 + c^2 + c + 2)(c^{14} + c^{13} + c^{12} + 2c^{10} + 2c^9 + 2c^7 + c^5 + c^4 + c^3 + c^2 + 2c + 1)\\
%f_5(c)=c^{16}(c^{65} + 2c^{38} + 2c^{29} + 2c^{26} + 2c^{25} + 2c^{24} + 2c^{23} + c^{22} + 2c^{19} + c^{17} + c^{16} + c^{15} + c^{11} + 2c^7 + 2c^6 + 2c^4 + 2c^3 + 2c^2 + 2c + 2).
%\end{gather*}
 $f_n$ has a primitive prime factor for every $1\leq n\leq 7$, and so in fact for every $n\geq 1$ there is a value of $c$ such that $y^2=x^3+c$ has a critical path of length exactly $n$.
\end{remark}

\begin{remark}
For unicritical correspondences $y^e=x^d+c$, the height bound above becomes much simpler. If this correspondence is PCC, then $|c|_v\leq 1$ for any non-archmidean $v$. But, also, if $|x|^d>2\max\{1, |c|\}$ at an archimedean place, we have
\[|y|=|x^d+c|^{1/e}\geq \left(1-1/2\right)^{1/e}|x|^{d/e}>(1-1/2)^{1/e}2^{(d-e)/e}|x|>|x|.\]
By induction, we see that $0$ can be contained in a preperiodic path only if $|c|^{1/e}\leq 2^{1/d}\max\{1, |c|^{1/d}\}$, in other words, only if $|c|\leq 2^{e/(d-e)}$. This gives
\[h(c)\leq \left(\frac{e}{d-e}\right)\log 2.\]
\end{remark}

\end{document}